\documentclass[12pt]{amsart}

\usepackage{amssymb}
\usepackage{amsmath}
\usepackage{amsfonts}
\usepackage[utf8]{inputenc}
\usepackage{amsthm}
\usepackage{tikz-cd}
\usepackage{float}
\usepackage{diagbox}
\DeclareUnicodeCharacter{2212}{-}
\newtheorem{thm}{Theorem}[section]
\newtheorem{lemma}[thm]{Lemma}
\newtheorem{df}[thm]{Definition}
\newtheorem{prop}[thm]{Proposition}
\newtheorem{ex}[thm]{Example}
\newtheorem{cor}[thm]{Corollary}

\newtheorem{prob}[thm]{Problem}
\newtheorem{remark}[thm]{Remark}

\newcommand{\Z}{\mathbb{Z}}
\newcommand{\ord}{\mathrm{ord}}
\newcommand{\charr}{\mathrm{char}}
\newcommand{\R}{\mathbb{R}}
\newcommand{\Q}{\mathbb{Q}}
\newcommand{\N}{\mathbb{N}}
\newcommand{\C}{\mathbb{C}}
\newcommand{\OO}{\mathcal{O}}

\newcommand{\FF}{\mathbb{F}}

\newcommand{\bs}{\backslash}

\renewcommand{\thefootnote}{\fnsymbol{footnote}}

\title{On Waring numbers of henselian rings}
\author{Tomasz  Kowalczyk and Piotr Miska}
\date{}

\begin{document}

\keywords{Waring numbers, sums of $n$th powers, Waring-like problem, henselian ring, rings of coordinates of varieties.}
\subjclass[2020]{Primary: 12D15}
\thanks{The research of the second author was supported by the Polish
National Science Centre grant UMO-2018/29/N/ST1/00470.}
\maketitle

\begin{abstract}
    
Let $n>1$ be a positive integer.    Let $R$ be a henselian local ring with residue field $k$ of $n$th level  $s_n(k)$. We give some upper and lower bounds for the $n$th Waring  number $w_n(R)$ in terms of $w_n(k)$ and $s_n(k)$. In large number of cases we are able to compute $w_n(R)$. Similar results for the $n$th Waring number of the total ring of fractions of $R$ are obtained. We then provide applications. In particular we compute $w_n(\Z_p)$ and $w_n(\Q_p)$ for $n\in\{3,4,5\}$ and any prime $p$.

\end{abstract}
\section{Introduction}

For a ring $R$ and a positive integer $n$, we define its $n$th Waring number $w_n(R)$ as the smallest positive integer $g$ such that any sum of $n$th powers is a sum of at most $g$ $n$th powers. If no such number exists, we put $w_n(R)=\infty$. Of course $w_1(R)=1$ for any ring $R$. We define the $n$th level of $R$ as the smallest positive integer $g$ such that $-1$ can be written as a sum of $g$ $n$th powers and denote it by $s_n(R)$.  We put $s_n(R)=\infty$ if such number does not exist. Clearly, for odd $n$ we have $s_n(R)=1$.

In the literature, there is a notion of $2$-Pythagoras number, which in our language is just $w_2$. Also, higher Pythagoras numbers are considered, however they are usually considered only for even powers. Since in this paper we will consider both odd and even powers, we decided to adopt the name Waring numbers for the aforementioned invariants.

The first result about $2$nd Waring number of a ring was famous Lagrange Four Square Theorem, which in modern language states that the $2$nd Waring number of the ring of integers and the field of rationals is equal to $4$ i.e. $w_2(\mathbb{Z})=w_2(\Q)=4$.  In general, it is a very difficult task to determine whether $n$th Waring number is finite or not. There are also some results in this manner for the rings of $k$-regulous functions on $\R^n$ (see \cite{banecki}).

Let us now briefly describe what is known for $n=2$.
If $-1$ is not a sum of squares in $R$, i.e. $s_2(R)=\infty$, then $A$ is called a real ring. It was proved by Hoffman \cite{hoffman}, that any positive integer can be realized as a $2$nd Waring number of a real field. There are also a lot of results for real polynomial rings. In the paper \cite{CLDR}  it was shown that $w_2(\mathbb{Z}[x])=\infty$ and $w_2(K[x_1,x_2,\dots, x_s])=\infty$ provided that $s\geq 2$ and $K$ is a real field. Recently, some development for sums of squares of polynomials of low degree was made \cite{benoist}, see also \cite{benoist 2, blachut, fernando} for related results. The oldest question in this manner is to find a precise value of $w_2(\R(x_1,x_2,\dots, x_s))$. At this moment we only know that (cf. \cite{CLR})
$$s+2 \leq w_2(\R(x_1,x_2,\dots, x_s))\leq 2^s$$
for $s\geq 2$. It is known that the Motzkin polynomial $M(x,y)=x^2y^4+x^4y^2 -3x^2y^2+1$ is nonnegative on $\R^2$, but is not a sum of squares of polynomials. Furthermore, it is not a sum of three squares of rational functions \cite{Cassels}, hence $w_2(\R(x_1,x_2))=4$. Besides the above inequality, nothing more is known for $s\geq 3$.

Assume now that $-1$ is a sum of squares in $R$ i.e. $s_2(R)<\infty$. In this case, $R$ is called a nonreal ring. This instance is somehow simpler, since there are the following bounds on $w_2(R)$ (cf. \cite{CLDR}).
$$s_2(R)\leq w_2(A)\leq s_2(R)+2$$
If we further assume that $2$ is a unit in $R$, then
$s_2(R) \leq w_2(R) \leq s_2(R)+1$.
This inequality follows from the formula 
$$a= \left(\frac{a+1}{2}\right)^2-\left(\frac{a-1}{2}\right)^2,$$
where $a \in R$ is any element of $R$.
It was a remarkable result of \cite{DLP} that the nonreal ring
$$A_s=\R[x_1,x_2,\dots, x_s]/(x_1^2+x_2^2+\dots+x_s^2+1)$$
has $s_2(A_s)=s$. A computation of $2$nd Waring number of $A_s$ for $s$ of particular form was performed in \cite{CLDR}. Authors do not know, if the $2$nd Waring numbers of the remaining cases of $s$ are known. The situation is different if we assume that $A$ is a field. Pfister \cite{pfister 1}  showed that the $2$-level of a nonreal field is necessarily a power of $2$, and all the powers can occur. There are several generalizations of the notion of level (cf. \cite{hoffman leep}) and a survey \cite{hoffman 2} for open questions concerning higher levels and reference therein.

What if $n>2$? If $s_n(R)<\infty$, then we may use the following identity
$$n!x=\sum_{r=0}^{n-1}(-1)^{n-1-r} {n-1 \choose r} [(x+r)^n -r^n ]. $$
This easily shows that if $n!$ is an invertible element of $R$, then we get
$$w_n(R)\leq  nG(n)(s_n(R)+1). $$
Here $G(n)$ is the smallest positive integer $s$ such that any sufficiently large positive integer can be written as a sum of at most $s$ $n$th powers. In particular, it is known that $G(n)$ exists and is finite. This function is closely related to the classical Waring problem (see \cite{ellison, liu wooley, vaughan wooley, wooley 1, wooley 2}). See also \cite{alnaser, small, winterhof, winterhof 2} for Waring problem in finite rings, \cite{bhaskaran, siegel} for number fields, \cite{bhaskaran2, birch, bovey, joly3, moser1, moser2, moser3, voloch} for $p$-adic rings and \cite{ellison 2, joly1, joly2, joly4, niven, ramanujam, tornheim, vaserstein 1, vaserstein 2} for more general rings. There is an interesting algorithmic approach to the problem of sum of squares \cite{darkey1, darkey2, koprowski1, koprowski2}.
It is also worth to mention the paper \cite{vaughan wooley} and its enormous literature.

If $s_{2d}(R)=\infty$, the situation is much more difficult. There are not many results on this topic in the literature, however it is known that $w_{2d}(\Z[x])=\infty$ (see \cite{CLDR}) and $w_{2d}(\mathbb{R}[x,y])=\infty$ (see \cite{vill}). Also, some bounds are known for algebroid curves (see \cite{ruiz1, ruiz2, quarez}). It is worth to mention the following theorem proved by Becker (cf. \cite[Theorem 2.12]{becker}): if $R$ is a  field, then
$$w_2(R)<\infty \iff w_{2d}(R)<\infty$$
for every positive integer $d>0$. However, it is not known if this fact can be generalized to rings.

In this paper we tackle the problem of computing the values of the Waring numbers and levels of a henselian local ring $R$.
Under some mild assumptions on $R$ we are able to prove upper bounds on $w_n(R)$. Also, in a quite large class of local henselian rings, we show this upper bound is achieved. Based on this, we study the $n$th Waring number of the total ring of fractions or $R$. The picture is much more complete, if we further assume that $R$ is a valuation ring of a rank-1 valuation. As we will see, in general computing the Waring number and level of henselian local ring $R$ and its full ring of quotients comes down to computing the Waring number and level of some quotient ring of $R$ (in many cases this quotient ring is the residue field $k$ of $R$).

The structure of the paper is as follows. Section 2 contains preliminary results concerning local henselian rings and Waring numbers. We then proceed to discuss henselian local rings in the Section 3, as well as henselian valuation rings of rank-1. The main result of this section is Theorem \ref{pnR} which gives us a method for computing $w_n(R)$ under some mild assumptions on $R$. Section 4 consists of results concerning total rings of fractions of a local henselian rings. Section 5 concerns Waring numbers for henselian rings with infinite level and their quotient fields. Section 6 contains numerous applications of theory developed in previous sections. In particular, we consider Waring numbers of various power series rings and fields. Also some lower bounds are given in the case of a coordinate ring of an irreducible algebraic subset. We then provide application to the $p$-adic rings and fields, and give full classification of $w_n(\Z_p)$ and $w_n(\Q_p)$ for $n=3,4,5$. The last subsection of Section 6 contains a discussion of possible relations between Waring numbers of a local ring, its henselization, and its completion. We finish the paper with some open problems which could stimulate further research in this manner.

\section{Preliminaries.}

We begin with a few definitions. Let $R$ be a ring and $n>1$ be a positive integer. For an element $a \in R$ we define \emph{the n-length of $a$}, denoted by $\ell_n(a)$ as the smallest positive integer $g$, such that $a$ can be written as a sum of $g$ $n$th powers. We put $\ell_n(a)=\infty$ if such number does not exist. Note that the $n$th level of $R$ is precisely the $n$-length of $-1$, i.e. $\ell_n(-1)=s_n(R)$.

Let us also mention the following property of the $n$th Waring number.
If $S$ is a multiplicatively closed set, then $w_n(S^{-1}R)\leq w_n(R)$. In particular, if $R$ is an integral domain with a field of fractions $K$, then $w_n(K)\leq w_n(R)$.

Let $R$ be an integral domain and $K$ be its field of fractions. If $R$ is local, then we denote its maximal ideal by $\mathfrak{m}$. The quotient ring $k=R/\mathfrak{m}$ is called the residue field of $R$. For an element $a \in R$, we denote its reduction modulo $\mathfrak{m}$ by $\Bar{a}$. 

A rank-$1$ valuation on a ring $R$ is a map $\nu : R \rightarrow \R \cup \{\infty \}$ such that
\begin{itemize}
    \item[a)] $\nu(x)=\infty \iff x=0$
    \item[b)] $\nu(xy)=\nu(x)+\nu(y)$
    \item[c)] $\nu(x+y) \geq \min \{\nu(x), \nu(y) \}$
\end{itemize}
for all $x,y \in R$. If $\nu(R\bs\{0\})$ is a discrete subset of $\R$ with respect to the Euclidean topology, we then say that $\nu$ is discrete. Let us note that existence of a rank-$1$ valuation on $R$ forces that $R$ is integral. We thus may extend a valuation to the field $K$ of fractions of $R$ by formula $\nu\left(\frac{x}{y}\right)=\nu(x)-\nu(y)$, $x,y\in R$, $y\neq 0$. If $R=\{f\in K|\nu(f)\geq 0\}$, then we will say that $R$ is a rank-$1$ valuation ring. The unique maximal ideal of $R$ is of the form $\mathfrak{m}=\{x \in R | \nu(x)>0 \}$.

If $\nu$ is rank-$1$ valuation on a field $K$, then the set $R_\nu=\{f\in K|\nu(f)\geq 0\}$ is a rank-$1$ valuation ring. The set $\nu(K\bs\{0\})$ is an Abelian group and is called valuation group of $\nu$. If moreover $\nu$ is discrete, then $\nu(K\bs\{0\})$ is of the form $\alpha\Z$ for some $\alpha>0$. After dividing $\nu$ by $\alpha$ we may assume that $\nu(K\bs\{0\})=\Z$. Hence, from now on, we assume that every discrete valuation attains only integer values. If $\nu$ is a discrete valuation on $K$, then  $R_\nu$ is a \emph{discrete valuation ring} or \emph{DVR}, for short. Each DVR is a local principal ideal domain and vice versa. A generator of $\mathfrak{m}$ is called then the uniformizing element or uniformizer of $R_\nu$.

We say that a local ring $R$ is \emph{henselian}, if for every polynomial $f \in R[x]$ in a single variable $x$, if there exists $b \in R$ such that $f(b)\in\mathfrak{m}$ and $f'(b)\not\in\mathfrak{m}$, then there exists an element $a \in R$ such that $f(a)=0$ and $\Bar{a}=\Bar{b}$. If $R$ is a rank-$1$ valuation ring with valuation $\nu$, then $R$ is henselian if and only if for every $f \in R[x]$, if there exists $b \in R$ such that $\nu(f(b))>2\nu(f'(b))$, then there exists an element $a \in R$ such that $f(a)=0$ and $\nu(a-b)>\nu(f'(b))$ (see \cite[Theorem 4.1.3]{EngPres}).

We call a ring $R$ reduced if it has no nontrivial nilpotents.

Before we prove the general results, we need some lemmas.
\begin{lemma}\label{surjection}
Let $\varphi : R \rightarrow S$ be a homomorphism of rings. Then for any $x\in R$ and for any positive integer $n>1$ the following inequality holds $\ell_n(\varphi(x)) \leq \ell_n(x)$. If $\varphi$ is an epimorphism, then $w_n(S)\leq w_n(R)$.
\begin{proof}
Clear.
\end{proof}
\end{lemma}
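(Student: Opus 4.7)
The plan is straightforward; this is really two routine observations.

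For the first inequality, I would start from a minimal representation $x = a_1^n + \cdots + a_g^n$ with $g = \ell_n(x)$ (handling the case $\ell_n(x) = \infty$ trivially, since the claim is then vacuous). Because $\varphi$ is a ring homomorphism, applying it gives
\[
\varphi(x) = \varphi(a_1)^n + \cdots + \varphi(a_g)^n,
\]
exhibiting $\varphi(x)$ as a sum of $g$ $n$th powers in $S$. By the minimality definition of $\ell_n$, this forces $\ell_n(\varphi(x)) \leq g = \ell_n(x)$.

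For the second part, suppose $\varphi$ is surjective and let $y \in S$ be any sum of $n$th powers, say $y = b_1^n + \cdots + b_m^n$. Using surjectivity, choose preimages $a_i \in R$ with $\varphi(a_i) = b_i$ for $i = 1, \ldots, m$, and set $x = a_1^n + \cdots + a_m^n \in R$. Then $x$ is itself a sum of $n$th powers in $R$, so $\ell_n(x) \leq w_n(R)$, while $\varphi(x) = y$. Combining this with the first part yields
\[
\ell_n(y) = \ell_n(\varphi(x)) \leq \ell_n(x) \leq w_n(R).
\]
Since $y$ was an arbitrary sum of $n$th powers in $S$, taking the supremum over such $y$ gives $w_n(S) \leq w_n(R)$.

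There is no genuine obstacle here; the only subtlety worth noting is that surjectivity is used precisely to lift a representation of $y$ as a sum of $n$th powers in $S$ to one in $R$, which is why the second statement needs $\varphi$ to be an epimorphism rather than an arbitrary homomorphism.
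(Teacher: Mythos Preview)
Your proof is correct and is exactly the argument the paper has in mind; the paper simply writes ``Clear'' and omits these routine details.
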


\begin{df}\label{free part}
Let $p$ be a prime integer and $n$ be a positive integer written in the form $n=p^km$, where $p$ does not divide $m$ and $k\geq 0$. We then say that $m$ is the $p$-free part of $n$. We extend this definition to the case $p=0$ and put $n=m$.
\end{df}

\begin{prop}\label{p_m}
Let $R$ be a ring of prime characteristic $p$ and $n>1$ be a positive integer.  If $n=p^km$, where $m$ is the $p$-free part of $n$, then 
$$w_n(R)\leq w_m(R).$$
If we further assume that $R$ is reduced, then
$$w_n(R)=w_m(R).$$

\begin{proof}
Let $f \in R$ be a sum of $n$th powers, $f=\sum_{i=1}^kf_i^n$. We then have
$$f=\sum_{i=1}^kf_i^n=\sum_{i=1}^kf_i^{p^km}=\sum_{i=1}^k(f_i^m)^{p^k}=\left(\sum_{i=1}^kf_i^m\right)^{p^k}.$$
and
$$\ell_n(f) = \min_g \{\ell_m(g) : g^{{p^k}}=f \}. $$
Hence, the first part follows. If $R$ is reduced, then the Frobenius map $R \ni x \mapsto x^p \in R $ is injective. Hence, there exists exactly one element $g \in R$ such that $g^{p^k}=f$.

\end{proof}

\end{prop}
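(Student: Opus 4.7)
The approach turns on the identity $(a+b)^{p^k} = a^{p^k} + b^{p^k}$, which holds in any ring of characteristic $p$, so the $p^k$-power map distributes over arbitrary finite sums. Combined with $n = p^k m$, this gives the master formula
\[
\sum_{i=1}^r f_i^n \;=\; \sum_{i=1}^r (f_i^m)^{p^k} \;=\; \Bigl(\sum_{i=1}^r f_i^m\Bigr)^{p^k},
\]
which translates freely between sums of $n$th powers and $p^k$th powers of sums of $m$th powers. I would record this identity up front as the only real computational ingredient.

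For the first inequality $w_n(R) \le w_m(R)$, I would pick an arbitrary sum of $n$th powers $f = \sum_{i=1}^r f_i^n$ and set $g = \sum_{i=1}^r f_i^m$, which is automatically a sum of $m$th powers, so the definition of $w_m(R)$ gives $g = \sum_{j=1}^{w_m(R)} g_j^m$. Applying the master formula in reverse,
\[
f \;=\; g^{p^k} \;=\; \Bigl(\sum_{j=1}^{w_m(R)} g_j^m\Bigr)^{p^k} \;=\; \sum_{j=1}^{w_m(R)} g_j^n,
\]
so $\ell_n(f) \le w_m(R)$, and taking the supremum over $f$ yields the bound. (Observe that this argument does not use reducedness, matching the statement.)

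For the reverse inequality under the reducedness hypothesis, I would start with $g$ a sum of $m$th powers, say $g = \sum_{i=1}^r h_i^m$, and form $g^{p^k}$. By the master formula, $g^{p^k} = \sum_{i=1}^r h_i^n$ is a sum of $n$th powers, hence can be rewritten with at most $w_n(R)$ summands: $g^{p^k} = \sum_{j=1}^{w_n(R)} a_j^n = \bigl(\sum_{j=1}^{w_n(R)} a_j^m\bigr)^{p^k}$. At this point I would invoke reducedness: the Frobenius $x \mapsto x^p$ is injective on a reduced ring of characteristic $p$, hence so is its iterate $x \mapsto x^{p^k}$, and the equality $g^{p^k} = \bigl(\sum a_j^m\bigr)^{p^k}$ forces $g = \sum_{j=1}^{w_n(R)} a_j^m$. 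Thus $\ell_m(g) \le w_n(R)$, giving $w_m(R) \le w_n(R)$.

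The only point that requires any care is the cancellation of $p^k$th powers in the second half, where one must cite injectivity of Frobenius on a reduced characteristic-$p$ ring (standard, but worth stating explicitly). Everything else is bookkeeping around the identity $\sum f_i^n = \bigl(\sum f_i^m\bigr)^{p^k}$.
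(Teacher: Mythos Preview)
Your proof is correct and follows essentially the same approach as the paper: both rely on the identity $\sum f_i^n = \bigl(\sum f_i^m\bigr)^{p^k}$ and on injectivity of the Frobenius in the reduced case. Your write-up is in fact more explicit than the paper's, which compresses the argument into the single formula $\ell_n(f) = \min_g\{\ell_m(g) : g^{p^k} = f\}$ and leaves the two inequalities to the reader.
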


Injectivity of the Frobenius map is equivalent to nonexistence of nontrivial nilpotent elements. The assumption that $R$ is reduced cannot be omitted in the above proposition as the following example shows.
\begin{ex}\label{frob} {\rm
Let $\FF_3$ be a field with $3$ elements. Define the ring
$$R:=\FF_3[x]/(x^3)$$ and take $n=6$. Here, the $3$-free part of $6$ is equal to $2$.

Any sixth power in this ring is either $0$ or $1$, hence $w_6(R)=2$ as $2$ is not a sixth power. However, $x$ is a not sum of two squares since no sum of two nonzero squares of elements in $\FF_3$ is equal to zero, hence $\ell_2(x)>2$. It is precisely $3$ as $x=(1+x)^2+(1+x)^2+(1+2x)^2$. In particular $w_2(R)=3$ and
$$2=w_6(R)<w_2(R)=3.$$  

Note that $R$ is complete with respect to its maximal ideal and thus henselian. }
\end{ex}

\begin{remark}\label{s_m} { \rm
Analogously as Proposition \ref{p_m} one can prove that\linebreak $s_n(R)\leq s_m(R)$. On the other hand, since $m\mid n$, we have that every presentation of $-1$ as a sum of $n$th powers is a presentation as a sum of $m$th powers as well. Thus, $s_n(R)\geq s_m(R)$. Finally, we get that if $m$ is the $\charr(R)$-free part of $n$, then $s_n(R)=s_m(R)$. }
\end{remark}

\section{Waring numbers of henselian rings with finite $n$th level.}

Now, we are ready to prove the first results of the paper.

\begin{thm}\label{pnR}
Let $R$ be a local ring with the maximal ideal $\mathfrak{m}$ and the residue field $k$. Let $n$ be a positive integer and $m$ be the $\charr(k)$-free part of $n$. Assume that $\charr(k)\nmid n$ or $\charr(R)=\charr(k)$ and $R$ is reduced. Then, the following statements are true.
\begin{enumerate}
    \item[a)] We have $s_n(R)\geq s_m(k)$ and $w_n(R)\geq w_m(k)$.
    \item[b)] If $R$ is henselian and $s_m(k)<\infty$, then $s_n(R)=s_m(k)$ and $w_n(R)\leq\max\{w_m(k),s_m(k)+1\}$.
    \item[c)] If $f\in\mathfrak{m}\neq\mathfrak{m}^2$ and $m>1$, then $\ell_n(f)\geq s_m(k)+1$. In particular, $w_n(R)\geq s_m(k)+1$.
\end{enumerate}
\end{thm}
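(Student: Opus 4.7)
My plan is to handle parts (a), (b), (c) in order, reducing each to a statement over the residue field $k$. A recurring move is to replace $n$ by its $\charr(k)$-free part $m$: this is trivial when $\charr(k)\nmid n$ (so $n=m$), and in the alternative case it is provided by Proposition \ref{p_m} and Remark \ref{s_m} together with the reducedness of $R$.

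For part (a), after performing this reduction the inequalities are immediate from Lemma \ref{surjection} applied to the reduction epimorphism $R\twoheadrightarrow k$ (using $x=-1$ for the level inequality).

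Part (b) is the core of the theorem. After reducing to $\charr(k)\nmid n$ so that $n=m$, the central tool is a Hensel-lifting lemma: \emph{if $c\in R$ and $\bar c=\sum_{i=1}^g\bar y_i^n$ in $k$ with at least one $\bar y_i\neq 0$, then $c$ is itself a sum of $g$ $n$-th powers in $R$}. To prove it, lift the $y_i$ to $z_i\in R$, fix an index $j$ with $\bar z_j\neq 0$, and apply the henselian criterion to $h(Y)=Y^n-(c-\sum_{i\neq j}z_i^n)$: then $h(z_j)\in\mathfrak{m}$, while $h'(z_j)=nz_j^{n-1}$ is a unit precisely because $\charr(k)\nmid n$ and $\bar z_j\neq 0$. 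Applied to $c=-1$, whose minimal $k$-representation necessarily contains a nonzero summand, the lemma gives $s_n(R)\leq s_m(k)$, which combined with part (a) yields equality. For the Waring bound, take $f\in R$ that is a sum of $n$-th powers and split on $\bar f$: if $\bar f\neq 0$, any minimal $k$-representation of $\bar f$ has a nonzero summand and the lemma produces $\ell_n(f)\leq w_m(k)$; if $\bar f=0$, I write $f=(f+1)+(-1)$ and apply the lemma to $f+1$ (whose reduction $1$ is a single nonzero $n$-th power) to obtain $f+1=a^n$ for some $a\in R$, then adjoin a representation of $-1$ with $s_m(k)$ summands to reach $\ell_n(f)\leq s_m(k)+1$. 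The principal obstacle lies in this second subcase: Hensel's lemma cannot directly lift an approximate representation whose summands are all non-units, because $nY^{n-1}$ then vanishes modulo $\mathfrak{m}$ at the approximate root. The translation $f\mapsto f+1$ sidesteps this at the cost of one extra $n$-th power, which is exactly the source of the ``$+1$'' in the bound $\max\{w_m(k),s_m(k)+1\}$.

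For part (c), I proceed by contradiction: suppose $f=\sum_{i=1}^g a_i^n$ with $g\leq s_m(k)$ and $f\in\mathfrak{m}\setminus\mathfrak{m}^2$. Reducing modulo $\mathfrak{m}$ yields $\sum_{i=1}^g\bar a_i^n=0$ in $k$; when $\charr(k)\mid n$, the Frobenius identity $\sum\bar a_i^n=(\sum\bar a_i^m)^{p^e}$ (where $p=\charr(k)$ and $n=p^em$) combined with the injectivity of Frobenius on the field $k$ refines this to $\sum_{i=1}^g\bar a_i^m=0$, and when $\charr(k)\nmid n$ the refinement is automatic since $n=m$. Now partition the indices into $I=\{i:a_i\notin\mathfrak{m}\}$ and its complement. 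The non-unit summands $a_i^n$ lie in $\mathfrak{m}^n\subseteq\mathfrak{m}^2$ (since $n\geq m>1$, hence $n\geq 2$), so $I$ must be nonempty, otherwise $f\in\mathfrak{m}^2$. Fixing $i_0\in I$ and dividing the relation $\sum_{i\in I}\bar a_i^m=0$ by the unit $\bar a_{i_0}^m$ exhibits $-1$ as a sum of $|I|-1$ $m$-th powers in $k$, forcing $s_m(k)\leq|I|-1\leq g-1\leq s_m(k)-1$, the desired contradiction.
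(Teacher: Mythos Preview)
Your arguments for (a), (b), and the inequality $\ell_n(f)\geq s_m(k)+1$ in (c) are correct and follow the same lines as the paper's proof; the variant in (b) where you write $f=(f+1)+(-1)$ for $f\in\mathfrak{m}$ is an equivalent repackaging of the paper's direct Hensel lift of the relation $0=1+\sum\bar g_i^n$.

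There is, however, a genuine gap in part (c): you do not address the ``In particular, $w_n(R)\geq s_m(k)+1$'' clause. Knowing that every $f\in\mathfrak{m}\setminus\mathfrak{m}^2$ satisfies $\ell_n(f)\geq s_m(k)+1$ only yields $w_n(R)\geq s_m(k)+1$ once you exhibit \emph{some} $f\in\mathfrak{m}\setminus\mathfrak{m}^2$ with $\ell_n(f)<\infty$ in $R$. Since $R$ is not assumed henselian in (c), this is not automatic (part (b) does not apply to $R$). The paper closes this gap by passing to the henselization $R^h$: given any $f\in\mathfrak{m}\setminus\mathfrak{m}^2$, part (b) applied to $R^h$ yields $f=\sum_{i=1}^l f_i^n$ with $f_i\in R^h$ and $l\leq s_m(k)+1$; choosing $\tilde f_i\in R$ with $\tilde f_i\equiv f_i\pmod{\mathfrak{m}^2}$ (possible since $R/\mathfrak{m}^2\cong R^h/(\mathfrak{m}^h)^2$) and setting $\tilde f=\sum\tilde f_i^n$ produces an element of $\mathfrak{m}\setminus\mathfrak{m}^2$ with finite $n$-length in $R$, to which your lower bound then applies.
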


\begin{proof}
By Proposition \ref{p_m}, we have $s_n(R)=s_m(R)$ and $w_n(R)=w_m(R)$ in case of $\charr(R)=\charr(k)$ on condition that $R$ is reduced. Hence, for the rest of the proof, we may assume that $n=m>1$.

a) follows directly from Lemma \ref{surjection}.

We start the proof of b) with taking any $f\in R\bs\mathfrak{m}$ such that $\ell_n(\Bar{f})=l$. Consider the polynomial
$$p(x_1,x_2,\dots,x_l)=\sum_{i=1}^lx_i^n -f$$
in $R[x_1,x_2,\dots,x_l]$. The derivative of $p$ with respect to $x_i$ is equal to $p_{x_i}'=nx^{n-1}_i$. Let $g_i \in R$, $i\in\{1,\ldots,l\}$, be such that $\Bar{f}=\sum_{i=1}^l \Bar{g}_i^n$. Since $f\not\in\mathfrak{m}$, we have $\Bar{g}_{i_0}\neq 0$ in $k$ for some $i_0\in\{1,\ldots,l\}$, or equivalently, $g_{i_0}\not\in\mathfrak{m}$. Without loss of generality we may assume that $i_0=1$.
Since $R$ is henselian, there exists an element $h$ such that $h\equiv g_1 \pmod{\mathfrak{m}}$ and $p(h,g_2,\dots,g_l)=0$.
Hence, $f$ can be written as a sum of $l$ $n$th powers of elements of $R$. In particular, $s_n(R)=s_n(k)$.

If $f\in\mathfrak{m}$, then we may find a relation in the residue field $0=1+\sum_{i=1}^{s_n(k)1}\Bar{g}_i^n$ with $\Bar{g}_i \in k$. Again, as above, by henselianity of $R$ we conclude that $f$ can be written as a sum of $s_n(k)+1$ $n$th powers of elements of $R$. 

For the proof of c), consider an element $f\in\mathfrak{m}\bs\mathfrak{m}^2$. We will show that $\ell_n(f)\geq s_n(k)+1$. Assume by contrary that $f = \sum_{i=1}^l f_i^n$ for some $f_i \in R$ and $l<s_n(k)+1$. If $f_{i_0}\not\in\mathfrak{m}$ for some $i_0\in\{1,\ldots,l\}$, then the following holds in the residue field.
$$-1=\sum_{\substack{1\leq i\leq l\\ i\neq i_0}} \overline{\left(\frac{f_i}{f_{i_0}}\right)}^n$$
This is a contradiction with the definition of $s_n(k)$. As a consequence, $f_i\in\mathfrak{m}$ for all $i\in\{1,\ldots,l\}$ but then $f=\sum_{i=1}^l f_i^n\in\mathfrak{m}^n\subset\mathfrak{m}^2$, a contradiction.

In order to show $w_n(R)\geq s_m(k)+1$, let us consider the $n$-length of $f\in\mathfrak{m}\bs\mathfrak{m}^2$ in the henselization $R^h$ of $R$. From the proof of b) for $R^h$ in the place of $R$ we know that this length is at most equal to $s_m(k)+1$. Write $f = \sum_{i=1}^l f_i^n$ for some $f_i \in R^h$ and $l\leq s_n(k)+1$. For each $i\in\{1,\ldots,l\}$ pick $\tilde{f}_i\in R$ such that $\tilde{f}_i\equiv f_i\pmod{\mathfrak{m}^2}$ and put $\tilde{f} = \sum_{i=1}^l \tilde{f}_i^n$. Then $\tilde{f}\in\mathfrak{m}\bs\mathfrak{m}^2$ and the $n$-length of $\tilde{f}$ in $R$ is at most equal to $s_n(k)+1$, in particular finite. On the other hand we already know that this $n$-length is at least equal to $s_n(k)+1$. This allows us to state that $w_n(R)\geq s_n(k)+1$.
\end{proof}

\begin{cor}
With the assumptions as in Theorem \ref{pnR}, if $\charr(k)\nmid n$ and every element of $k$ can be written as a sum of $n$th powers in $k$, then every element of $R$ can be written as a sum of $w_n(R)$ $n$th powers in $R$.

\end{cor}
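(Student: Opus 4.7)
The plan is to reduce the corollary to the tautological observation that, once every element of $R$ is shown to be a sum of $n$th powers, each such element is automatically a sum of at most $w_n(R)$ $n$th powers by the very definition of the Waring number. Since $\charr(k)\nmid n$, we have $m=n$ in the notation of Theorem \ref{pnR}; applying the hypothesis to $-1\in k$ gives $s_n(k)<\infty$. The two Hensel-lifting constructions inside the proof of Theorem \ref{pnR}(b) should then do essentially all the remaining work.

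Given $f\in R$, I would split into two cases. If $f\notin\mathfrak{m}$, then $\bar f\in k$ is nonzero, and by hypothesis $\bar f=\sum_{i=1}^l\bar g_i^n$ in $k$ with at least one $\bar g_i$, say $\bar g_1$, nonzero; in particular $g_1\notin\mathfrak{m}$. I would then invoke the first Hensel-lemma argument from the proof of Theorem \ref{pnR}(b), applied to $p(x_1,\ldots,x_l)=\sum_{i=1}^l x_i^n-f$ at $(g_1,\ldots,g_l)$, to lift this to an equality $f=h^n+\sum_{i=2}^l g_i^n$ in $R$, using the fact that $\charr(k)\nmid n$ makes the partial derivative $nx_1^{n-1}$ at $g_1$ a unit. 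If $f\in\mathfrak{m}$, then writing $-1=\sum_{i=1}^{s_n(k)}\bar g_i^n$ in $k$ gives $0=1+\sum_{i=1}^{s_n(k)}\bar g_i^n-\bar f$ in $k$ (since $\bar f=0$), and I would apply the second Hensel-lemma argument from the proof of Theorem \ref{pnR}(b), based at $b=1$, to produce $f=h^n+\sum_{i=1}^{s_n(k)}g_i^n$ in $R$.

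Combining the two cases would show every $f\in R$ is a sum of $n$th powers in $R$, whence the conclusion follows immediately from the definition of $w_n(R)$. I do not expect any substantive obstacle: the corollary is really a bookkeeping observation that the added hypothesis precisely closes the only gap left by Theorem \ref{pnR}(b), namely the assumption that the residue $\bar f$ already be a sum of $n$th powers in $k$ in order to begin the Hensel lift when $f$ is a unit.
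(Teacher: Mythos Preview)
Your proof is correct and follows exactly the approach the paper intends: the corollary is left unproved in the paper precisely because it is an immediate consequence of the two Hensel-lifting cases in the proof of Theorem \ref{pnR}(b), and you have spelled these out accurately. The only point worth making explicit is that henselianity of $R$ (from part (b)) is being silently assumed in the corollary, which you use and which is indeed necessary.
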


\begin{cor}\label{pnR=...}
Let $R$ be a henselian local ring with the maximal ideal $\mathfrak{m}\neq\mathfrak{m}^2$, residue field $k$ and $s_m(k)<\infty$. Let $n$ be a positive integer and $m$ be the $\charr(k)$-free part of $n$. Assume that $\charr(k)\nmid n$ or $\charr(R)=\charr(k)$ and $R$ is reduced. Then the following holds
\begin{equation}\label{eq3}
            w_n(R)=\begin{cases}
            \max \{w_m(k), s_m(k)+1 \} & \text{ for } m>1    \\
            1 & \text{ for } m=1
            \end{cases}.
        \end{equation}
\end{cor}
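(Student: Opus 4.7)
The plan is to show that the corollary falls out by directly combining the three parts of Theorem \ref{pnR} in the case $m>1$, and by applying Proposition \ref{p_m} in the degenerate case $m=1$. No new ideas are needed; the main content of the statement is that the upper bound of part b) is actually tight, which is witnessed by combining the two lower bounds of parts a) and c).

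First I would dispose of the trivial case $m=1$. Since $n>1$ and the $\charr(k)$-free part of $n$ equals $1$, we must have $\charr(k)>0$ and $n$ a pure power of $\charr(k)$; in particular $\charr(k)\mid n$, so the hypothesis of the corollary forces us into the second branch, namely $\charr(R)=\charr(k)$ with $R$ reduced. Proposition \ref{p_m} then gives $w_n(R)=w_m(R)=w_1(R)=1$, matching the stated value.

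Next I would treat the main case $m>1$. The upper bound
\[
w_n(R)\leq\max\{w_m(k),s_m(k)+1\}
\]
is exactly Theorem \ref{pnR} b) (note that the hypothesis $s_m(k)<\infty$ is in force, so part b) applies). For the matching lower bound, Theorem \ref{pnR} a) yields $w_n(R)\geq w_m(k)$, and Theorem \ref{pnR} c), applied to any $f\in\mathfrak{m}\setminus\mathfrak{m}^2$ (which exists precisely because $\mathfrak{m}\neq\mathfrak{m}^2$), yields $w_n(R)\geq s_m(k)+1$. Taking the maximum of these two inequalities produces the reverse bound and closes the proof.

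There is essentially no hard step here; the only small things to verify are that $m=1$ indeed lands in the reduced-characteristic case (so that Proposition \ref{p_m} is available) and that the hypothesis $\mathfrak{m}\neq\mathfrak{m}^2$ is exactly what is needed to invoke part c) to contribute the $s_m(k)+1$ lower bound. Once these are checked, the corollary is a one-line consequence of Theorem \ref{pnR}.
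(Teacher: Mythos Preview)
Your proposal is correct and matches the paper's intended argument: the corollary is stated without proof precisely because it follows immediately from combining parts a), b), and c) of Theorem \ref{pnR}, together with Proposition \ref{p_m} for the degenerate case $m=1$. One cosmetic point: your treatment of $m=1$ tacitly assumes $n>1$; if $n=1$ then $m=1$ as well and $w_1(R)=1$ is trivial, so no argument is needed there.
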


\begin{remark} {\rm
If $\charr(R)=\charr(k)\mid n$, $m>1$ and $R$ is not reduced, then Proposition \ref{p_m} and the reasoning in the above proof allows us to state that $w_n(R)\leq\max\{w_m(k),s_m(k)+1\}$ still holds in the point c). Example \ref{frob} shows that the inequality can be strict. }

\end{remark}

\begin{remark}\label{almostPuiseaux} {\rm
The assumption $\mathfrak{m}\neq\mathfrak{m}^2$ cannot be omitted in the point c). Consider the ring $R=\bigcup_{j=0}^\infty \C[[x^{\frac{1}{2^j}}]]$. Then $\mathfrak{m}=\bigcup_{j=0}^\infty x^{\frac{1}{2^j}}\C[[x^{\frac{1}{2^j}}]]=\mathfrak{m}^2$ and $k=\C$. Of course, $s_n(\C)=1$ for each positive integer $n$. Each element of $R$ can be written in the form $x^{\frac{a}{2^j}}f_0$ for some $a,j\in\N$, where $f_0\in\C[[x^{\frac{1}{2^j}}]]$ has order $0$ with respect to $x^{\frac{1}{2^j}}$. For each $t\in\N_+$ by Hensel's lemma we have $f_0=f_t^{2^t}$ for some $f_t\in\C[[x^{\frac{1}{2^j}}]]$. Thus, $$f=\left(x^{\frac{a}{2^{j+t}}}f_t\right)^{2^t}$$ and because of arbitrariness of $f$ we conclude that $w_{2^t}(R)=1<2=s_{2^t}(k)+1$ for any $t\in\N$. On the other hand, $x$ is not an $n$th power in $R$ when $n$ is not a power of $2$. Hence, $w_n(R)\geq 2=s_n(k)+1$ for any positive integer $n$ not being a power of $2$.

Summing up, if $\mathfrak{m}=\mathfrak{m}^2$, then the inequality from the point c) may hold but need not to. }
\end{remark}
It is worth to mention the paper \cite{joly4}. Among many other problems, Joly considers the finiteness of $w_d(A)$ for a henselian local ring $A$.

\begin{thm}\label{joly}\cite[Th\'eor\`eme 7.34]{joly4}
Let $A$ be a henselian local ring with a finite residue field and $d$ be a prime integer. Then there are upper bounds of the form
$$w_d(A)\leq 
\begin{cases}
6 \,\, \textit{if} \,\, d=2 \\
2d-1 \,\, \textit{if}\,\, d>2.
\end{cases}$$
\end{thm}
 The above Theorem works well for $d=p$ and any henselian local ring with finite residue field. In particular, it shows that for rings of the form $\Z_p[[x]]$, the $p$th Waring number is finite. The authors are not able to establish such a statement using theory developed in this paper (cf. Remark \ref{remark problems}).

\subsection{\textbf{\textsc{Waring numbers of henselian rank-$1$ valuation rings.}}}
In this subsection we deal with the problem of computing Waring numbers of rank-$1$ valuation rings. Assume for the remainder of this section, that $R$ is a rank-$1$ valuation ring with respect to a nontrivial valuation $\nu$. Denote by $K$ the field of fractions of $R$. Let $\mathfrak{m}$ be the maximal ideal of $R$. Define $I_r:=\{f\in R|\nu(f)>r\}$ (with this notation $\mathfrak{m}=I_0$). Let $n$ be a positive integer and $m$ be the $\charr(R)$-free part of $n$.

Performing analogous reasoning as in the proof of Theorem \ref{pnR} we can show the following result for the rings with rank-$1$ valuation.

\begin{thm}\label{pncvR}
With the above assumptions, the following statements are true.
\begin{enumerate}
    \item[a)] We have $s_n(R)\geq s_m(R/I_r)$ and $w_n(R)\geq w_m(R/I_r)$ for each $r\in [0,\infty)$.
    \item[b)] If $m>1$ and $r\in [0,\infty)$, $g\in K$ are such that\linebreak $\inf\{n\nu(f)|f\in\mathfrak{m}\}>\nu(g)>r$, then $\ell_n\left(g^{\frac{n}{m}}\right)\geq s_m(R/I_{r})+1$.
    \item[c)] If $R$ is henselian, $r\geq 2\nu(m)$ and $s_m(R/I_{r})<\infty$, then $s_n(R)=s_m(R/I_{r})$ and 
    \begin{enumerate}
        \item[i)] $w_n(R)=\max\{w_m(R/I_{r}),s_m(R/I_{r})+1\}$ if $m>1$ and\linebreak $\inf\{n\nu(f)|f\in\mathfrak{m}\}>\nu(g)>r$ for some $g\in\mathfrak{m}$;
        \item[ii)] $w_n(R)=w_m(R/I_{r})$ if $m>1$ and there exists $f\in\mathfrak{m}$ such that $\nu(g)\in [0,r]\cup [n\nu(f),\infty]$ for all $g\in R$;
        \item[iii)] $w_n(R)=1$ if $m=1$.
    \end{enumerate}
    Moreover, if $\charr(R)\nmid n$ and every element of $R/I_{r}$ can be written as a sum of $n$th powers in $R/I_{r}$, then every element of $R$ can be written as a sum of $w_n(R)$ $n$th powers in $R$.
\end{enumerate}
\end{thm}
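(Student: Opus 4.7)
The strategy is to emulate the proof of Theorem~\ref{pnR}, substituting the residue field $k$ by the quotient ring $R/I_r$ and invoking the rank-$1$ valuation form of Hensel's lemma stated in the introduction. The hypothesis $r \geq 2\nu(m)$ is tailored so that, when $P'(b)$ equals $m$ times a unit, the Hensel condition $\nu(P(b)) > 2\nu(P'(b))$ reduces to $\nu(P(b)) > 2\nu(m)$, which is automatic from $P(b) \in I_r$. As a preliminary, I would invoke Proposition~\ref{p_m} together with the fact that $R$ is a domain (hence reduced) to reduce to the case $n = m$; the identification $\ell_n(g^{n/m}) = \ell_m(g)$ in characteristic $p$ (via Frobenius and the uniqueness of $p^k$-th roots in a reduced ring) collapses the $n$- and $m$-statements into one.

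Part~(a) is immediate from Lemma~\ref{surjection} applied to the canonical surjection $R \twoheadrightarrow R/I_r$. For part~(b), suppose $g^{n/m} = \sum_{i=1}^\ell h_i^n$ with $h_i \in R$. Reducing modulo $I_r$ and noting that $\nu(g) > r$ forces $g^{n/m} \in I_r$, one obtains the relation $\sum \bar h_i^n = 0$ in $R/I_r$. The valuation hypothesis prevents all $h_i$ from lying in $\mathfrak{m}$: otherwise $\sum h_i^n$ would have valuation at least $\inf\{n\nu(f) : f \in \mathfrak{m}\}$, exceeding that of $g^{n/m}$. Hence some $\bar h_{i_0}$ is a unit in $R/I_r$, and dividing the relation by $\bar h_{i_0}^n$ expresses $-1$ as a sum of $\ell - 1$ $n$-th powers in $R/I_r$; Remark~\ref{s_m} then yields $\ell \geq s_m(R/I_r) + 1$.

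For part~(c), the equality $s_n(R) = s_m(R/I_r)$ follows by Hensel-lifting a minimal representation $-1 = \sum \bar g_i^m$ from $R/I_r$: at least one $g_{i_0}$ can be chosen a unit (because $-1$ is), and the rank-$1$ Hensel lemma applies to $P(x) = x^m + \sum_{i \neq i_0} g_i^m + 1$ at $b = g_{i_0}$ since $\nu(P(b)) > r \geq 2\nu(m) = 2\nu(P'(b))$. Case~(iii) is immediate from Proposition~\ref{p_m} and $w_1(R) = 1$. In cases~(i) and~(ii) the lower bound combines (a) and (b) (in case~(i) the exhibited $g$ contributes the $s_m(R/I_r)+1$ term through (b)). The upper bound proceeds by analysing a minimal $R/I_r$-decomposition of $\bar f$: if it contains a unit summand, Hensel-lifting yields $\ell_n(f) \leq w_m(R/I_r)$; otherwise one instead lifts the trivial representation $0 = 1 + \sum \bar g_i^m$, yielding $\ell_n(f) \leq s_m(R/I_r) + 1$, which is absorbed into the $\max$ in case~(i).

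The principal obstacle is the upper bound in case~(ii), where the "$+1$" must not appear. There, the valuation-gap hypothesis forces any unit-free minimal $R/I_r$-decomposition of $\bar f$ to satisfy $f \in I_r$, and in the gap regime one has $\nu(f) \geq n\nu(f_0)$, so one can factor $f = f_0^n g$ and induct on $\nu(f)$: any representation $g = \sum h_i^n$ transfers to $f = \sum (f_0 h_i)^n$ with the same number of terms, eventually reducing to a base case with $\nu(f) \leq r$ that is handled by the unit-summand argument. The \emph{moreover} clause follows from the same Hensel-lifting machinery: the additional hypothesis guarantees, for every $f \in R$, an $R/I_r$-decomposition of $\bar f$ of length at most $w_m(R/I_r)$, from which the corresponding $R$-decomposition is manufactured.
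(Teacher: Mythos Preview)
Your proof follows the same strategy as the paper's: reduce to $n=m$ via Proposition~\ref{p_m}, invoke Lemma~\ref{surjection} for (a), run the valuation-contradiction for (b), and Hensel-lift for (c). Parts (a), (b), (c-iii), the level equality in (c), and the upper bound in (c-i) are all handled correctly and match the paper essentially verbatim.

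There is, however, a genuine gap in your treatment of (c-ii). Your claim that ``the valuation-gap hypothesis forces any unit-free minimal $R/I_r$-decomposition of $\bar f$ to satisfy $f\in I_r$'' is false. Take $R$ a DVR with uniformizer $\pi$, $n=2$, $r=3$, and $f_0=\pi$; then $n\nu(f_0)=2\leq r$, so the gap condition $\nu(R)\subseteq[0,r]\cup[n\nu(f_0),\infty]$ holds trivially. Now $f=\pi^2$ has $\nu(f)=2\leq r$, so $f\notin I_r$, yet $\bar f=(\bar\pi)^2$ is a unit-free minimal decomposition in $R/I_r$. Your base case ``$\nu(f)\leq r$, handled by the unit-summand argument'' therefore fails for such $f$, and your induction does not close.

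The paper avoids this by using a finer dichotomy: it proves the Hensel-lifting step only for $g$ with $\nu(g)<\inf\{n\nu(h):h\in\mathfrak m\}$ (there the valuation argument from (b) genuinely forces a unit summand in \emph{any} $R/I_r$-decomposition), and for $\nu(g)\geq n\nu(f_0)$ it divides once by $f_0^{tn}$ to land in the first range. When $f_0$ is taken of minimal positive valuation (which one may assume without loss in the discrete setting, since the gap condition only weakens), these two ranges exhaust $R$ and the argument closes. Your induction becomes correct if you replace the base case ``$\nu(f)\leq r$'' by ``$\nu(f)<n\nu(f_0)$'' and continue factoring out $f_0^n$ whenever $\nu(f)\geq n\nu(f_0)$, not merely when $\nu(f)>r$.
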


\begin{proof}
If $n=m$, i.e. $n$ is not divisible by $\charr (R)$, then the theorem gives us the inequalities for $w_n(R)$. In general, by Proposition \ref{p_m}, we have $w_n(R)= w_m(R)$ in case of $\charr(R)\mid n$. Moreover, from the proof of Proposition \ref{p_m} we have $\ell_n\left(g^{\frac{n}{m}}\right)=\ell_m(g)$. Furthermore, it is obvious that $w_1(R)=s_1(R)=1$. Hence, for the rest of the proof, we may assume that $n=m>1$.

a) follows directly from Lemma \ref{surjection}.

We start the proof of b). Assume by contrary that $g = \sum_{i=1}^l g_i^n$ for some $g_i \in R$ and $l<s_n(R/I_r)+1$. If $\nu(g_{i_0})=0$ for some $i_0\in\{1,\ldots,l\}$, then the following holds after reduction modulo $I_r$.
$$-1=\sum_{\substack{1\leq i\leq l\\ i\neq i_0}} \overline{\left(\frac{g_i}{g_{i_0}}\right)}^n$$
This is a contradiction with the definition of $s_n(R/I_r)$. As a consequence, $g_i\in\mathfrak{m}$ for all $i\in\{1,\ldots,l\}$ but then $\nu(g)=\nu\left(\sum_{i=1}^l g_i^n\right)\geq\inf\{n\nu(f)|f\in\mathfrak{m}\}$, a contradiction. Thus, $\ell_n(g)\geq s_n(R/I_r)+1$.

We start the proof of c) with taking any $g\in R\bs I_r$ such that $\nu(g)<\inf\{n\nu(f)|f\in\mathfrak{m}\}$ and $\ell_n(\Bar{g})=l$ with respect to the quotient ring $R/I_{r}$. Consider the polynomial
$$p(x_1,x_2,\dots,x_l)=\sum_{i=1}^lx_i^n -g$$
in $R[x_1,x_2,\dots,x_l]$. The derivative of $p$ with respect to $x_i$ is equal to $p_{x_i}'(x_1,x_2,\dots,x_l)=nx^{n-1}_i$. Let $g_i \in R$, $i\in\{1,\ldots,l\}$, be such that $\Bar{g}=\sum_{i=1}^l \Bar{g}_i^n$ in $R/I_{r}$. Since $\nu(g)<\inf\{n\nu(f)|f\in\mathfrak{m}\}$, we have $\nu(g_{i_0})=0$ for some $i_0\in\{1,\ldots,l\}$. Without loss of generality we may assume that $i_0=1$.
Since $R$ is henselian and $\nu\left(p(g_1,g_2,\dots,g_l)\right)>r\geq 2\nu(n)=2\nu(p_{x_1}'(g_1,g_2,\dots,g_l))$, there exists an element $h\in R$ such that $h\equiv g_1 \pmod{I_{\nu(n)}}$ and $p(h,g_2,\dots,g_l)=0$.
Hence, $g$ can be written as a sum of $l$ $n$th powers of elements of $R$. In particular, $s_n(R)=s_n(R/I_r)$.

If $r<\nu(g)$, then we may find a relation in the quotient ring $R/I_{r}$ $$0=1+\sum_{i=1}^{s_n(R/I_r)}\Bar{g}_i^n$$ with elements $\Bar{g}_i \in R/I_{r}$ not all equal to $0$. Again, as above, by henselianity of $R$ we conclude that $g$ can be written as a sum of $s_n(R/I_r)+1$ $n$th powers of elements of $R$.

Assume now that there exists $f\in\mathfrak{m}$ such that $\nu(g)\in [0,r]\cup [n\nu(f),\infty]$. If $\nu(g)\geq n\nu(f)$, then (as $\nu(f)>0$) there exists $t\in\N$ such that $0\leq\nu\left(\frac{g}{f^{tn}}\right)=\nu(g)-tn\nu(f)<n\nu(f)$ (in fact, $t=\left\lfloor\frac{\nu(g)}{n\nu(f)}\right\rfloor$). By our assumption we have that $\nu\left(\frac{g}{f^{tn}}\right)\leq r$, so $\frac{g}{f^{tn}}\not\in I_r$ and we already know that $\ell_n\left(\frac{g}{f^{tn}}\right)\leq w_n(R/I_r)$. Thus $\ell_n(g)\leq\ell_n\left(\frac{g}{f^{tn}}\right)\leq w_n(R/I_r)$.

Summing up, if $\inf\{n\nu(f)|f\in\mathfrak{m}\}>\nu(g)>r$, then $w_n(R)\leq\max\{p_m(R/I_{r}),s_m(R/I_{r})+1\}$ and the inequalities $w_n(R)\geq w_m(R/I_{r})$ and $w_n(R)\geq s_m(R/I_{r})+1$ follow from points a) and b), respectively. If there exists $f\in\mathfrak{m}$ such that $\nu(g)\in [0,r]\cup [n\nu(f),\infty]$ for all $g\in R$, then $w_n(R)\leq w_m(R/I_{r})$ and the reverse inequality follows from a).
\end{proof}

\begin{remark} { \rm
Let us see that if $\charr(R)\mid n$, then $\nu(m)=0$. This is because $R$ is integral so $\charr(R)$ is a prime number. Thus, $\charr(R/\mathfrak{m})=\charr(R)$, which means that $\charr(R/\mathfrak{m})\nmid m$. Consequently, $m\neq 0$ in $R/\mathfrak{m}$. In other words, $m\not\in\mathfrak{m}$.

Hence, if $\nu(m)>0$, then $m=n$. In this case, $\charr(R)\nmid n$ but $\charr(R/\mathfrak{m})\mid n$. Since $R/\mathfrak{m}$ is a field, $R$ is integral, $\charr(R/\mathfrak{m})\mid\charr(R)$ and $\charr(R/\mathfrak{m})\neq\charr(R)$, we have $\charr(R)=0$ and $\charr(R/\mathfrak{m})$ is a prime number. }
\end{remark}

\begin{remark} { \rm
The case c-i) holds only if $\nu$ is discrete. Indeed, there exists $g\in\mathfrak{m}$ such that $\nu(f)>\frac{\nu(g)}{n}>0$ for all $f\in\mathfrak{m}$. Since $0$ is an isolated point of the valuation group of $\nu$, this group is discrete. }
\end{remark}

\begin{cor}\label{pnHDVR}
Assume that $R$ is a henselian DVR with the valuation group equal to $\Z$. Let $s_m(R/\mathfrak{m}^{2\nu(m)+1})<\infty$. Then $s_n(R)=s_m(R/\mathfrak{m}^{2\nu(m)+1})$ and 
\begin{enumerate}
        \item[i)] $w_n(R)=\max\{w_m(R/\mathfrak{m}^{2\nu(m)+1}),s_m(R/\mathfrak{m}^{2\nu(m)+1})+1\}$ if $m>1$ and $n>2\nu(m)+1$;
        \item[ii)] $w_n(R)=w_m(R/\mathfrak{m}^{2\nu(m)+1})$ if $m>1$ and $n\leq 2\nu(m)+1$;
        \item[iii)] $w_n(R)=1$ if $m=1$.
    \end{enumerate}
    Moreover, if $\charr(R)\nmid n$ and every element of $R/\mathfrak{m}^{2\nu(m)+1}$ can be written as a sum of $n$th powers in $R/\mathfrak{m}^{2\nu(m)+1}$, then every element of $R$ can be written as a sum of $w_n(R)$ $n$th powers in $R$.
\end{cor}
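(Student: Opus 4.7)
The plan is to specialize Theorem~\ref{pncvR}(c) to this setting by choosing the real parameter $r := 2\nu(m)$. Because the valuation group equals $\Z$, one has $\mathfrak{m}^k = \{f \in R : \nu(f) \geq k\}$ for every non-negative integer $k$, so $I_{2\nu(m)} = \{f \in R : \nu(f) > 2\nu(m)\} = \mathfrak{m}^{2\nu(m)+1}$. With this identification, the requirement $r \geq 2\nu(m)$ of Theorem~\ref{pncvR}(c) is satisfied (with equality) and the hypothesis $s_m(R/\mathfrak{m}^{2\nu(m)+1}) < \infty$ is literally the hypothesis $s_m(R/I_r) < \infty$ of the theorem. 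The level equality $s_n(R) = s_m(R/\mathfrak{m}^{2\nu(m)+1})$ follows at once.

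For the Waring-number cases I would next compute $\inf\{n\nu(f) : f \in \mathfrak{m}\} = n$, which is attained at any uniformizer $\pi$. This is the point where integrality of the valuation group is decisive. Case c-i) of Theorem~\ref{pncvR} demands a $g \in \mathfrak{m}$ with $n > \nu(g) > 2\nu(m)$; as $\nu(g) \in \Z$, such $g$ exists (take $g = \pi^{2\nu(m)+1}$) if and only if $n > 2\nu(m)+1$, giving (i). Case c-ii) demands an $f \in \mathfrak{m}$ with $\nu(g) \in [0, 2\nu(m)] \cup [n\nu(f), \infty]$ for every $g \in R$; choosing $f = \pi$ this union becomes $[0,2\nu(m)] \cup [n,\infty]$, which covers $\Z_{\geq 0}\cup\{\infty\}$ precisely when $n \leq 2\nu(m)+1$, giving (ii). The case $m = 1$ is c-iii) verbatim, giving (iii).

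The \emph{moreover} clause transfers without change from the corresponding statement in Theorem~\ref{pncvR}(c). Overall, there is no substantive obstacle: the corollary is a bookkeeping specialization of the main theorem, and the only point needing care is matching the integer dichotomy $n > 2\nu(m)+1$ versus $n \leq 2\nu(m)+1$ with the two side-conditions c-i) and c-ii), a check that comes down entirely to the integrality of the valuation.
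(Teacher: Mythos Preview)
Your proposal is correct and follows essentially the same approach as the paper's own proof: set $r=2\nu(m)$, identify $I_{2\nu(m)}=\mathfrak{m}^{2\nu(m)+1}$ via integrality of the valuation, and then match the dichotomy $n>2\nu(m)+1$ versus $n\leq 2\nu(m)+1$ to cases c-i) and c-ii) of Theorem~\ref{pncvR}. You are in fact slightly more explicit than the paper in naming the witnesses $g=\pi^{2\nu(m)+1}$ and $f=\pi$, but the argument is the same.
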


\begin{proof}
The case $m=1$ is obvious, so assume that $m>1$. We put $r=2\nu(m)$. If $n>2\nu(m)+1$, then we have $\inf\{n\nu(f)|f\in\mathfrak{m}\}=n>2\nu(m)+1>2\nu(m)=r$, so we apply Theorem \ref{pncvR} c-i). If $n\leq 2\nu(m)+1$, then the valuation of any element of $R$ belongs to $[0,2\nu(m)]$ or $[n,\infty]$, so we use Theorem \ref{pncvR} c-ii).
\end{proof}

\section{Waring numbers of total rings of fractions of\linebreak henselian rings with finite $n$th level}

\begin{thm}\label{pnQR}
Let $R$ be a henselian local ring with the total ring of fractions $Q(R)\neq R$, the maximal ideal $\mathfrak{m}$ and the residue field $k$. Let $n$ be a positive integer and $m$ be the $\charr(k)$-free part of $n$. Assume that $\charr(k)\nmid n$ or $\charr(R)=\charr(k)$. Then,
\begin{equation}\label{eq4}
    w_n(Q(R))\leq\begin{cases}
    s_m(k)+1 & \text{ for } m>1\\
    1 & \text{ for } m=1
    \end{cases},
\end{equation}
where the equality in the case of $m>1$ holds under assumption that $R$ is an integral domain, $s_m(Q(R))=s_m(R)$ and there exists a nontrivial valuation $\nu:Q(R)\to\Z\cup\{\infty\}$ such that $R\subset R_\nu:=\{f\in Q(R)\left|\nu(f)\geq 0\right.\}$. Moreover, if $\charr(k)\nmid n$, then every element of $Q(R)$ can be written as a sum of $w_n(Q(R))$ $n$th powers in $Q(R)$.
\end{thm}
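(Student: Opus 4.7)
The plan is to prove the upper bound by a Hensel-lifting argument on the numerator after a convenient rescaling, and to obtain the matching lower bound in the equality case by a valuation-theoretic cancellation argument.

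First I would reduce to $n=m$. If $\charr(R)=\charr(k)=p>0$ and $p\mid n$, then both $R$ and $Q(R)$ have characteristic $p$; in the integral domain case they are reduced, so Proposition~\ref{p_m} and Remark~\ref{s_m} give $w_n(Q(R))=w_m(Q(R))$ and $s_n(\cdot)=s_m(\cdot)$. The case $m=1$ is trivial since $w_1=1$, so only $n=m>1$ needs treatment, and I may also assume $s_m(k)<\infty$ (otherwise the asserted bound is vacuous).

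For the upper bound, I would write a given $f\in Q(R)$ as $f=a/b$ with $a,b\in R$ and $b$ a non-zero-divisor, so that $f=c/b^n$ with $c=ab^{n-1}\in R$. Because $Q(R)\neq R$ there is a non-zero-divisor $D\in\mathfrak{m}$; then
\[
f=\frac{cD^n}{(bD)^n},\qquad cD^n\in\mathfrak{m},
\]
since $D^n\in\mathfrak{m}$. The Hensel-lifting step from the proof of Theorem~\ref{pnR}b), applied to a relation $0=1+\sum\bar{g}_i^{\,n}$ in $k$, shows that \emph{any} element of $\mathfrak{m}$ is a sum of at most $s_m(k)+1$ $n$-th powers in $R$. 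Applying this to $cD^n$ and dividing through by $(bD)^n$ exhibits $f$ as a sum of at most $s_m(k)+1$ $n$-th powers in $Q(R)$. Because the argument never used that $f$ was a priori a sum of $n$-th powers, it simultaneously yields the ``moreover'' clause when $\charr(k)\nmid n$.

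For the lower bound in the equality case, my candidate element is some $\pi\in R$ with $\nu(\pi)>0$ and $n\nmid\nu(\pi)$; such $\pi$ exists because $\nu(R\setminus\{0\})$ generates the group $\Z=\nu(Q(R)^\times)$, hence has $\gcd$ equal to $1$. Suppose $\pi=\sum_{i=1}^{l}f_i^n$ in $Q(R)$ with $l\le s_m(k)$. Writing $f_i=u^{e_i}\tilde f_i$ with $\nu(u)=1$ and $\nu(\tilde f_i)=0$, and setting $e=\min_i e_i$, the inequality $\nu(\pi)\ge ne$ must be strict because $\nu(\pi)$ is not a multiple of $n$. Hence the minimum-valuation $f_i^n$ cancel at the residue level, producing a relation $-1=\sum_{i\neq i_0}(\overline{\tilde f_i}/\overline{\tilde f_{i_0}})^{\,n}$ in $R_\nu/\mathfrak{m}_{R_\nu}$ with at most $l-1$ summands, so $s_n(R_\nu/\mathfrak{m}_{R_\nu})\le l-1$.

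The delicate step, and the main obstacle, is to convert this into a bound involving $s_m(k)$. The chain $s_n(Q(R))\le s_n(R_\nu)\le s_n(R)$, together with the hypothesis $s_m(Q(R))=s_m(R)$ and Theorem~\ref{pnR}b), pins down $s_n(R_\nu)=s_n(k)$; the henselian property of $R$, transported along the inclusion $R\subset R_\nu$, is then used to lift a minimal $(-1)$-relation from $R_\nu/\mathfrak{m}_{R_\nu}$ back to $R$ without increasing the number of summands. Once this lift is in place, the resulting inequality $s_m(k)\le l-1$ contradicts $l\le s_m(k)$, so $\ell_n(\pi)\ge s_m(k)+1$ in $Q(R)$ and thus $w_n(Q(R))\ge s_m(k)+1$, matching the upper bound.
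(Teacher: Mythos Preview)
Your argument follows the same plan as the paper's: same reduction to $n=m>1$, same upper-bound mechanism (force the numerator into $\mathfrak{m}$ and invoke the Hensel step from Theorem~\ref{pnR}b)), and the same test element for the lower bound (some $\pi\in R$ with $n\nmid\nu(\pi)$, which exists because $\nu(R\setminus\{0\})$ generates $\Z$ and hence has $\gcd$ equal to $1$). Your derivation of a relation $-1=\sum_{i\neq i_0}(\overline{f_i/f_{i_0}})^n$ in $k_\nu=R_\nu/\mathfrak{m}_{R_\nu}$ with at most $l-1$ summands, giving $s_n(k_\nu)\le l-1$, also mirrors the paper.

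The gap is at your ``delicate step''. You correctly establish $s_n(R_\nu)=s_n(k)$ from the sandwich $s_n(Q(R))\le s_n(R_\nu)\le s_n(R)$ together with the hypothesis $s_n(Q(R))=s_n(R)$ and Theorem~\ref{pnR}b) for $R$. But your proposed bridge from $s_n(k_\nu)\le l-1$ to a contradiction---``the henselian property of $R$, transported along the inclusion $R\subset R_\nu$, is then used to lift a minimal $(-1)$-relation from $k_\nu$ back to $R$''---is not a valid step as written. Henselianity does not in general pass along an inclusion of local rings, and there is no natural map $k_\nu\to R$ (nor $k_\nu\to k$) through which such a lift would make sense; so it is unclear how a relation in $k_\nu$ would produce one in $R$. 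What is actually required is the equality $s_n(R_\nu)=s_n(k_\nu)$: combined with $s_n(R_\nu)=s_n(k)$ this yields $s_n(k_\nu)=s_n(k)$, contradicting $s_n(k_\nu)\le l-1<s_n(k)$. The paper obtains $s_n(R_\nu)=s_n(k_\nu)$ by applying Theorem~\ref{pnR}b) directly to the local ring $R_\nu$ (with residue field $k_\nu$), not to $R$; your vague ``transport'' sentence does not stand in for that invocation.
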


\begin{proof}

By Proposition \ref{p_m}, we have $w_n(Q(R))\leq w_m(Q(R))$ in case of $\charr(R)=\charr(k)$ and the equality holds on condition that $R$ is reduced or $m=1$. Furthermore, it is obvious that $w_1(R)=w_1(Q(R))=1$. Hence, for the rest of the proof, we may assume that $n=m>1$.

Take any $f \in Q(R)$. Since we are dealing with a total ring of fractions being a proper superset of $R$, it is enough to find $g,f_1,\ldots,f_{l} \in R$ such that $g^nf=\sum_{i=1}^{l}f_i^n\in\mathfrak{m}$, where $g$ is not a zero divisor. By the proof of Theorem \ref{pnR} b) we know that $l=s_n(k)+1$ suffices. This proves the inequality $w_n(Q(R))\leq s_n(k)+1$. 

For the proof of the reverse inequality we make an assumption that $R$ is integral, $s_m(Q(R))=s_m(R)$ and $R\subset R_\nu=\{f\in Q(R)\left|\nu(f)\geq 0\right.\}$ for some nontrivial discrete valuation $\nu$ on the field $Q(R)$ of fractions of $R$. Since $\nu(Q(R))\bs\{0\}$ is a nontrivial subgroup of $\Z$, we may assume without loss of generality that $\nu(Q(R))=\Z\cup\{\infty\}$. Then, the set $\nu(R\bs\{0\})$ must be a submonoid of $\N$ generated by elements with their greatest common divisor equal to $1$. Otherwise $\nu(Q(R))\bs\{0\}$ would be a proper subgroup of $\Z$. Thus, there exists a positive integer $t$ not divisible by $n$ which is attained as a valuation of some element of $R$. Let us take $f\in R$ with $\nu(f)=t$. Suppose by contrary that $f = \sum_{i=1}^l f_i^n$ for some $f_i \in Q(R)$ and $l<s_n(k)+1$. If $\nu(f_i)>\frac{t}{n}$ for all $i\in\{1,\ldots,l\}$, then $\nu\left(\sum_{i=1}^l f_i^n\right)>t$ - a contradiction. Therefore, there exists $i_0\in\{1,\ldots,l\}$ such that $\nu(f_{i_0})=\min\{\nu(f_i)|i\in\{1,\ldots,l\}\}<\frac{t}{n}$. Dividing by $f_{i_0}^n$ we get $$\frac{f}{f_{i_0}^n}=1+\sum_{\substack{1\leq i\leq l\\ i\neq i_0}} \left(\frac{f_i}{f_{i_0}}\right)^n.$$ After reduction modulo $\mathfrak{m}_\nu=\{f\in Q(R)\left|\nu(f)>0\right.\}$ we obtain the following equality in the residue field $k_\nu=R_\nu/\mathfrak{m}_\nu$ of the ring $R_\nu$. $$-1=\sum_{\substack{1\leq i\leq l\\ i\neq i_0}} \overline{\left(\frac{f_i}{f_{i_0}}\right)}^n$$ This means that $s_n(k_\nu)\leq l-1<s_n(k)$. On the other hand, from Theorem \ref{pnR} b) we know that $s_n(R)=s_n(k)$ and $s_n(R_\nu)=s_n(k_\nu)$. Since $R\subset R_\nu\subset Q(R)$, we have $s_n(R)\geq s_n(R_\nu)\geq s_n(Q(R))$. Because we assume that $s_n(Q(R))=s_n(R)$ we infer that $s_n(k_\nu)=s_n(k)$ - a contradiction. Summing up, we showed that $w_n(Q(R))\geq s_n(k)+1$ if $R$ is an integral ring contained in the set $\{f\in Q(R)\left|\nu(f)\geq 0\right.\}$ for some nontrivial discrete valuation $\nu$ on $Q(R)$ and $s_n(Q(R))=s_n(R)$.
\end{proof}

\begin{remark} {\rm
If we omit the assumption that\linebreak $R\subset\{f\in Q(R)\left|\nu(f)\geq 0\right.\}$ for some nontrivial discrete valuation $\nu$ on $Q(R)$, then the equality $w_n(Q(R))= s_m(k)+1$, $m>1$, may hold but do not need to. Take $R=\bigcup_{j=0}^\infty \C[[x^{\frac{1}{2^j}}]]$. Then $Q(R)=K=\bigcup_{j=0}^\infty \C((x^{\frac{1}{2^j}}))$ and $k=\C$. By Remark \ref{almostPuiseaux}, we already know that $p_{2^t}(R)=1$ for any $t\in\N$. From Lemma \ref{surjection} we have $w_{2^t}(K)=1<2=s_{2^t}(k)+1$. On the other hand, if $n$ is not a power of $2$, then $x$ is not an $n$th power in $K$. Hence,
$2\leq w_n(K)\leq s_n(k)+1=2$, in other words $w_n(K)=s_n(k)+1=2$. }
\end{remark}

\begin{remark} {\rm
The assumption $s_n(Q(R))=s_n(R)$ is essential for the equality $w_n(Q(R))=s_m(k)+1$, $m>1$, to hold. Consider the ring $k[[x,\zeta_{2^t}x]]$, where $t\geq 2$, $\zeta_{2^t}$ is a primitive root of unity of order $2^t$ and $k$ is a field such that $s_2(k)>1$ (in particular, $\charr(k)\neq 2$). Then, $s_{2^{t-1}}(k)\geq s_2(k)>1$, which means that $\zeta_{2^t}\not\in k$. Consequently, $\zeta_{2^t}\not\in R$, equivalently $s_{2^{t-1}}(R)>1$. On the other hand, $Q(R)=K=k(\zeta_{2^t})((x))$. Therefore, $\zeta_{2^t}\in K$. As a result, $s_{2^{t-1}}(K)=1$. $K$ is also the field of fractions of a DVR $R'=k(\zeta_{2^t})[[x]]\supset R$ with $k(\zeta_{2^t})$ as a residue field. Applying Theorem \ref{pnQR} to $R'$, $k(\zeta_{2^t})$ and $K$ we obtain the equality $w_{2^{t-1}}(K)=s_{2^{t-1}}(k(\zeta_{2^t}))+1=2<s_{2^{t-1}}(k)+1$. }
\end{remark}

The next result gives us a sufficient condition for the equality\linebreak $s_n(Q(R))=s_n(R)$.

\begin{thm}\label{sn}
Let $K$ be a field with a nontrivial (not necessarily discrete) rank-$1$ valuation $\nu:K\to\R\cup\{\infty\}$, $R=\{f\in K:\nu(f)\geq 0\}$, $\mathfrak{m}=\{f\in K:\nu(f)>0\}$ and $k=R/\mathfrak{m}$. Take a positive integer $n$. Let $m$ be the $\charr(k)$-free part of $n$. Then $s_n(R)=s_n(K)\geq s_m(k)$, where the equalities hold if $R$ is henselian and $\charr(k)\nmid n$ or $\charr(R)=\charr(k)$.
\end{thm}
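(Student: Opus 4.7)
My plan is to split the theorem into three claims: (I) $s_n(R)=s_n(K)$ unconditionally, (II) $s_n(K)\geq s_m(k)$ unconditionally, and (III) $s_n(K)=s_m(k)$ under the henselian and characteristic hypotheses. Items (II) and (III) will follow quickly from earlier results; the substance lies in (I).

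For (I), the inequality $s_n(R)\geq s_n(K)$ is immediate from $R\subseteq K$. For the reverse, I would take any representation $-1=\sum_{i=1}^{\ell}f_i^n$ in $K$ with $\ell=s_n(K)$ (assumed finite, else the claim is trivial) and inspect $v:=\min_i\nu(f_i)$. Since $\nu(-1)=0$ forces $v\leq 0$, the only nontrivial case is $v<0$, where after relabelling so $\nu(f_1)=v$ I would set $g_1:=f_1^{-1}$ and $g_i:=f_i/f_1$ for $i\geq 2$. Minimality of $v$ puts every $g_i$ in $R$, since $\nu(g_1)=-v>0$ and $\nu(g_i)=\nu(f_i)-v\geq 0$, and a direct computation using $\sum f_i^n=-1$ yields
$$\sum_{i=1}^{\ell}g_i^n \;=\; f_1^{-n}\Bigl(1+\sum_{i=2}^{\ell}f_i^n\Bigr) \;=\; f_1^{-n}\bigl(1+(-1-f_1^n)\bigr) \;=\; -1,$$
so the new representation has the same length $\ell$ but now lives entirely in $R$.

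For (II), I would apply Lemma \ref{surjection} to the quotient map $R\to k$ to obtain $s_n(k)\leq s_n(R)=s_n(K)$, and then use that every $n$th power is also an $m$th power (because $m\mid n$) to get $s_m(k)\leq s_n(k)$, which is the easy half of Remark \ref{s_m}. For (III), $R$ is integral (it sits inside a field), hence in particular reduced, so under the henselian and characteristic hypotheses Theorem \ref{pnR}b applies and gives $s_n(R)=s_m(k)$ whenever $s_m(k)<\infty$; if instead $s_m(k)=\infty$, then (II) already forces every invariant in the chain to be infinite. Combining either case with (I) yields $s_n(R)=s_n(K)=s_m(k)$.

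The only delicate step is the valuation trick in (I). The key observation is that replacing $f_1$ by $f_1^{-1}$ keeps the total number of summands at $\ell$ while converting the single obstruction (the element with the most negative valuation) into an element of $\mathfrak{m}$, after which minimality of $v$ automatically places the remaining ratios $f_i/f_1$ inside $R$. Everything else is bookkeeping on top of Lemma \ref{surjection}, Remark \ref{s_m}, and Theorem \ref{pnR}b.
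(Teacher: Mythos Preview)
Your proof is correct and follows essentially the same route as the paper. The core ``valuation trick'' in your step (I)---dividing through by the summand $f_1$ of minimal valuation so that $f_1^{-1}$ and the ratios $f_i/f_1$ all land in $R$---is exactly what the paper does, and your steps (II) and (III) invoke the same auxiliary results (Lemma \ref{surjection}, Remark \ref{s_m}, Theorem \ref{pnR}b) as the paper. The only cosmetic difference is that the paper first reduces to $n=m$ and then also extracts the inequality $s_n(K)\geq s_n(k)$ directly from the mod-$\mathfrak{m}$ reduction of the divided equation, whereas you obtain (II) more cleanly from Lemma \ref{surjection} alone after having established (I).
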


\begin{proof}
By Remark \ref{s_m}, we have $s_n(K)=s_m(K)$ in case when $\charr(R)=\charr(k)$ and $\charr(k)\mid n$. Certainly, $s_1(K)=1$. Hence, for the rest of the proof, we may assume that $n=m>1$.

Of course $s_n(K)\leq s_n(R)$. Let
\begin{equation}\label{-1}
    -1=\sum_{i=1}^l f_i^n,
\end{equation} where $f_1,\ldots f_l\in K$. If $f_1,\ldots f_l\in R$, then $l\geq s_n(R)\geq s_n(k)$, where the last inequality follows from Lemma \ref{surjection}. If not all $f_i$ belong to $R$, then without loss of generality assume that $\nu(f_l)=\min\{\nu(f_i)|i\in\{1,\ldots,l\}\}<0$. Dividing \eqref{-1} by $f_l^n$, we obtain 
\begin{equation}\label{sneq}
    -\frac{1}{f_l^n}=1+\sum_{i=1}^{l-1} \left(\frac{f_i}{f_l}\right)^n.
\end{equation}
Then,
$$-1=\frac{1}{f_l^n}+\sum_{i=1}^{l-1} \left(\frac{f_i}{f_l}\right)^n,$$
which means that $l\geq s_n(R)$. As a result, $s_n(K)\geq s_n(R)$. We can reduce equality \eqref{sneq} modulo $\mathfrak{m}$ to get $$-1=\sum_{i=1}^{l-1} \overline{\left(\frac{f_i}{f_l}\right)}^n.$$ Hence, $l\geq s_n(k)+1$. As a result, $s_n(K)\geq s_m(k)$.

If $R$ is henselian and $\charr(k)\nmid n$ or $\charr(R)=\charr(k)$, then $s_n(R)=s_n(K)=s_m(k)$ follows from Theorem \ref{pnR} b).
\end{proof}

\begin{remark} { \rm
If $R$ is not henselian, then the inequality $s_n(K)>s_m(k)$ is possible. Let $n=2$, $p$ be an odd prime number and $R=\Z_{(p)}$ be the localization of $\Z$ with respect to the prime ideal $(p)$. Then $s_2(K)=s_2(\Q)=\infty$ while $s_2(k)=s_2(\FF_p)\leq 2$.}
\end{remark}

\subsection{\textbf{\textsc{Waring numbers of rank-$1$ valuation fields.}}}
Assume for the remainder of this subsection that $K$ is a field with a nontrivial rank-$1$ valuation $\nu$ and $R=\{f\in K|\nu(f)\geq 0\}$ be henselian with maximal ideal $\mathfrak{m}=\{f\in K|\nu(f)>0\}$. For each $r\in [0,\infty)$ we define $I_r=\{f\in K|\nu(f)>r\}$. Let $n$ be a positive integer and $m$ be its $\charr(K)$-free part.

Now we give an alternative version of Theorem \ref{pnQR}, where $K$ is the field of fractions of a henselian rank-$1$ valuation ring as the following holds.

Before we state the result, we define for each local ring $A$ with maximal ideal $\mathfrak{m}$ and $g\in Q(A)$ the following value
\begin{align*}
    \ell_{n,A}^*(g)= &\inf\left\{l\in\N_+\left|\ g=\sum_{i=1}^l g_i^n \text{ for some }g_1,\ldots ,g_l\in A, g_1\not\in\mathfrak{m}\right.\right\},
\end{align*}
where we recall that $\inf\varnothing=+\infty$.

\begin{thm}\label{pnK2}
With the above assumptions, if $s_m(R)<\infty$, then for each $g\in K$ we have
\begin{align*}
    &\ \ell_{n,K}(g)=
    \inf\left\{\ell_{m,R}^*(gh_1^n), \ell_{m,R}^*\left(\frac{g}{h_2^n}\right)\left| h_1,h_2\in R, \, gh_1^n, \frac{g}{h_2^n}\in R\right.\right\}\\
    =&\ \inf\left\{\ell_{m,R/I_{2\nu(n)}}^*(\overline{gh_1^n}), \ell_{m,R/I_{2\nu(n)}}^*\left(\overline{\frac{g}{h_2^n}}\right)\left| h_1,h_2\in R, \, gh_1^n, \frac{g}{h_2^n}\in R\right.\right\}.
\end{align*}
Moreover, if $\charr(K)\nmid n$, then for every element $f \in K$ we have $\ell_n(f)<\infty$.
\end{thm}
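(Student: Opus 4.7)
The plan is to establish the two displayed equalities in turn and then derive the moreover clause. The proofs of both equalities mirror the valuation-normalization and henselian-lifting techniques of Theorems \ref{pnR} and \ref{pncvR}; throughout I restrict to nonzero $h_1,h_2\in R$, since division by $h_2^n$ forces $h_2\neq 0$ and the choice $h_1=0$ would spoil the infimum via $\ell_{n,R}^*(0)=s_n(R)+1$.

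For the first equality, the direction $\geq$ begins with a minimal representation $g=\sum_{i=1}^{l}f_i^n$ in $K$ where $l=\ell_{n,K}(g)$. I pick $i_0$ minimizing $\nu(f_i)$, set $\alpha=f_{i_0}$, and observe that every $f_i/\alpha$ lies in $R$ while $f_{i_0}/\alpha=1$ is a unit. If $\nu(\alpha)\geq 0$, taking $h_2=\alpha\in R$ gives $g/h_2^n=\sum_i(f_i/\alpha)^n\in R$ with a unit term; if $\nu(\alpha)<0$, taking $h_1=1/\alpha\in\mathfrak{m}\subset R$ gives the analogous representation of $gh_1^n$. The direction $\leq$ is immediate: from $gh_1^n=\sum c_i^n$ in $R$ one recovers $g=\sum(c_i/h_1)^n$ in $K$ of the same length, and symmetrically for $g/h_2^n$.

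For the second equality, $\leq$ follows by reducing any $R$-representation modulo $I_{2\nu(n)}$. For $\geq$, suppose $\overline{gh_1^n}=\sum_{i=1}^l\bar c_i^n$ in $R/I_{2\nu(n)}$ with $\bar c_1$ a unit; I take lifts $c_1,\ldots,c_l\in R$ with $c_1\in R^*$ and consider the single-variable polynomial $q(x)=x^n+c_2^n+\cdots+c_l^n-gh_1^n\in R[x]$. By hypothesis $\nu(q(c_1))>2\nu(n)$, and $\nu(q'(c_1))=\nu(nc_1^{n-1})=\nu(n)$ because $c_1$ is a unit. The valuation-theoretic form of henselianity recalled in the introduction then produces $\tilde c_1\in R$ with $q(\tilde c_1)=0$ and $\nu(\tilde c_1-c_1)>\nu(n)\geq 0$, so $\tilde c_1\equiv c_1\pmod{\mathfrak{m}}$ remains a unit and $gh_1^n=\tilde c_1^n+c_2^n+\cdots+c_l^n$. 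The $g/h_2^n$ case is entirely symmetric. The main technical subtlety, and the step I expect to need the most care, is precisely keeping the unit summand a unit through Hensel lifting; this is what the approximation bound $\nu(\tilde c_1-c_1)>\nu(n)\geq 0$ ensures.

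For the moreover clause, $\charr(K)\nmid n$ forces $m=n$, so the theorem's hypothesis reads $s_n(R)<\infty$, and hence $s_n(R/I_{2\nu(n)})\leq s_n(R)<\infty$ by Lemma \ref{surjection} applied to the quotient map at $x=-1$. Given $f\in K$, I pick $h_1\in R$ with $\nu(h_1)$ large enough that $\nu(fh_1^n)>2\nu(n)$ (possible since $\nu$ is nontrivial: take $h_1=t^N$ for any $t\in\mathfrak{m}\setminus\{0\}$ and $N$ sufficiently large), so $\overline{fh_1^n}=0$ in $R/I_{2\nu(n)}$. A representation $-1=\sum_{j=1}^{s}\bar d_j^n$ in $R/I_{2\nu(n)}$ (with $s=s_n(R/I_{2\nu(n)})$) yields $0=1^n+\sum_j\bar d_j^n$, whence $\ell_{n,R/I_{2\nu(n)}}^*(0)\leq s+1<\infty$; the second equality then gives $\ell_{n,R}^*(fh_1^n)<\infty$, and the first yields $\ell_n(f)<\infty$, as required.
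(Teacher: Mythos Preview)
Your proof is correct and follows essentially the same approach as the paper's: normalize a minimal $K$-representation by the term of least valuation to land in $R$ with a unit summand (first equality), then invoke the valuation form of Hensel's lemma on the single-variable polynomial $x^n+c_2^n+\cdots+c_l^n-gh_1^n$ to lift from $R/I_{2\nu(n)}$ back to $R$ while preserving the unit summand (second equality). Your write-up is in fact more explicit than the paper's in two places—the henselian lift and the ``moreover'' clause, neither of which the paper spells out—though you omit the one-line reduction (via Proposition~\ref{p_m}) from $n$ to its $\charr(K)$-free part $m$ that the paper invokes at the outset to justify the subscript $m$ on the right-hand sides.
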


\begin{proof}
By Proposition \ref{p_m}, we have $w_n(K)=w_m(K)$ in case of $\charr(K)\mid n$. Furthermore, it is obvious that $w_1(R)=w_1(K)=1$. Hence, for the rest of the proof, we may assume that $n=m>1$.



Let $g\in K$ be arbitrary. We will prove that $$\ell_{n,K}(g)=\inf\left\{\ell_{m,R}^*(gh_1^n), \ell_{m,R}^*\left(\frac{g}{h_2^n}\right)\left| h_1,h_2\in R, \frac{g}{h_2^n}\in R\right.\right\}.$$ Suppose that $\ell_{n,K}(g)=l$ and $g=\sum_{i=1}^l g_i^n$ for some $g_1,\ldots ,g_l\in K$. Assume without loss of generality that $\nu(g_1)=\min_{1\leq i\leq l}\nu(g_i)$. Then we write $g=g_1^n\left(1+\sum_{i=2}^l \left(\frac{g_i}{g_1}\right)^n\right)$ if $g_1\in R$ or $\frac{g}{g_1^n}=1+\sum_{i=2}^l \left(\frac{g_i}{g_1}\right)^n$ otherwise. Thus $l\geq\ell_{n,R}^*(g)$. On the other hand, if $gg_0^n=\sum_{i=1}^l g_i^n$ for some $g_0\in K\bs\{0\}$ and $g_1,\ldots ,g_l\in R$ with $g_1\not\in\mathfrak{m}$, then $g=\sum_{i=1}^l \left(\frac{g_i}{g_0}\right)^n$ and $l\geq\ell_{n,K}(g)$.

The equality $\ell_{m,R}^*(g)=\ell_{m,R/I_{2\nu(m)}}^*(\bar{g})$ follows directly from the henselianity of $R$. Hence,
\begin{align*}
    &\ \inf\left\{\ell_{m,R}^*(gh_1^n), \ell_{m,R}^*\left(\frac{g}{h_2^n}\right)\left| h_1,h_2\in R, \frac{g}{h_2^n}\in R\right.\right\}\\
    &\ =\inf\left\{\ell_{m,R/I_{2\nu(n)}}^*(\overline{gh_1^n}), \ell_{m,R/I_{2\nu(n)}}^*\left(\overline{\frac{g}{h_2^n}}\right)\left| h_1,h_2\in R, \frac{g}{h_2^n}\in R\right.\right\}.
\end{align*}
\end{proof}

\begin{thm}\label{pnK3}
With the above assumptions, we have the following inequality
$$w_n(K)\leq s_m(R/I_{2\nu(m)})+1.$$

\begin{proof}
By Proposition \ref{p_m}, we have $w_n(K)=w_m(K)$ in case of $\charr(K)\mid n$. Furthermore, it is obvious that $w_1(R)=w_1(K)=1$. Hence, for the rest of the proof, we may assume that $n=m>1$.

Since $\charr(K)\nmid n$, then $n\neq 0$ in $K$. In particular, if $a=b^n\in R\bs\{0\}$ is an $n$th power in $R$, then by henselianity of $R$ the neighbourhood $$\{c\in R|\nu(c-a)>2\nu(nb^{n-1})\}$$ of $a$ is contained in the set of $n$th powers in $R$. Let $-1=\sum_{i=1}^{s_n(R)}g_i^n$ for some $g_1,\ldots,g_{s_n(R)}\in R$. The set of $n$th powers in $R$ contains some neighbourhoods of $1$ and $g_1^n,\ldots,g_{s_n(R)}^n$. Consequently, the set of sums of $s_m(R)+1$ $n$th powers contains some neighbourhood $U$ of $0$. Now it suffices to note that every element of $K$ can be written in the form $\frac{g}{h^n}$, where $g\in U$ and $h\in R$. Thus, every element of $K$ can be written as a sum of $s_n(R)+1$ $n$th powers in $K$. By Theorem \ref{pncvR}c) we know that $s_n(R)=s_n(R/I_{2\nu(n)})$. Hence, the inequality $w_n(K)\leq s_n(R/I_{2\nu(n)})+1$ is proved.
\end{proof}
\end{thm}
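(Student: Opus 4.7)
The plan is to reduce to the case $n=m>1$ with $\charr(K)\nmid n$, fix a presentation $-1=\sum_{i=1}^{s_n(R)}g_i^n$ in $R$, and use the henselian form of Hensel's lemma to show that any sufficiently small element of $R$ is a sum of $s_n(R)+1$ $n$th powers; scaling by $h^n$ for a suitable $h\in K^\times$ then covers all of $K$.

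First I would dispose of the case $\charr(K)\mid n$: by Proposition \ref{p_m} applied to the field $K$ (automatically reduced) we have $w_n(K)=w_m(K)$, and Remark \ref{s_m} gives $s_n(R)=s_m(R)$, so $n$ may be replaced by $m$ throughout. The case $m=1$ is trivial, so I henceforth assume $n=m>1$ and $\charr(K)\nmid n$. I may also assume $s_m(R/I_{2\nu(m)})<\infty$, since otherwise the target inequality is vacuous; under this assumption Theorem \ref{pncvR} c) identifies $s_n(R)=s_m(R/I_{2\nu(m)})$, so it suffices to prove $w_n(K)\leq s_n(R)+1$.

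The heart of the argument, and what I expect to be the main technical step, is the following openness statement: for every $f\in I_{2\nu(n)}$ the polynomial $P(x)=x^n-(1+f)$ satisfies $\nu(P(1))=\nu(f)>2\nu(n)=2\nu(P'(1))$, so the valuation form of henselianity quoted in the introduction produces $c_0\in R$ with $c_0^n=1+f$. Combined with a fixed presentation $-1=\sum_{i=1}^{s_n(R)}g_i^n$ this yields $f=c_0^n+g_1^n+\cdots+g_{s_n(R)}^n$, showing that every element of $I_{2\nu(n)}$ is already a sum of $s_n(R)+1$ $n$th powers in $R$. To pass from $I_{2\nu(n)}$ to all of $K$, I would use that the valuation group of $\nu$ is a nontrivial subgroup of $\R$: given $f\in K$, I pick $h\in K^\times$ with $n\nu(h)<\nu(f)-2\nu(n)$ so that $f/h^n\in I_{2\nu(n)}$, and then multiply the resulting presentation through by $h^n$ to express $f$ itself as a sum of $s_n(R)+1$ $n$th powers of elements of $K$, completing the proof.
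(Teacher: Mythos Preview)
Your proof is correct and follows essentially the same approach as the paper: reduce to $n=m>1$, use the valuation form of Hensel's lemma to see that $n$th powers form an open set (hence every element of $I_{2\nu(n)}$ is a sum of $s_n(R)+1$ $n$th powers via a fixed presentation of $-1$), then scale by an $n$th power to cover all of $K$, and finally invoke Theorem~\ref{pncvR}c) to identify $s_n(R)$ with $s_m(R/I_{2\nu(m)})$. Your version is in fact slightly tidier, since you only need the neighbourhood of $1$ (not of every $g_i^n$) and you make the finiteness hypothesis on $s_m(R/I_{2\nu(m)})$ explicit.
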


\begin{cor}\label{hDVR}
Assume additionally that $R$ is a DVR with valuation group equal to $\Z$. If $s_m(R)<\infty$ and $n>2\nu(m)+1$, then $w_n(K)= s_m(R/\mathfrak{m}^{2\nu(m)+1})+1$.
\end{cor}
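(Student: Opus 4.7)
The plan is to deduce the upper bound $w_n(K) \leq s_m(R/\mathfrak{m}^{2\nu(m)+1}) + 1$ from Theorem~\ref{pnK3} and to match it by producing an explicit witness. Since $R$ is a DVR with value group $\Z$, one has $I_{2\nu(m)} = \mathfrak{m}^{2\nu(m)+1}$, so Theorem~\ref{pnK3} yields the upper bound directly. Writing $s := s_m(R/\mathfrak{m}^{2\nu(m)+1})$, the remaining task is to exhibit some $f \in K$ with $\ell_{n,K}(f) \geq s+1$.

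The witness I would choose is $f := \pi^{2\nu(m)+1}$ for a uniformizer $\pi$ of $R$; this is the natural $K$-analogue of the obstruction element used inside $R$ in Theorem~\ref{pncvR}b). Suppose, toward a contradiction, that $f = \sum_{i=1}^{l} g_i^n$ with $g_i \in K$ and $l \leq s$, and set $v := \min_i \nu(g_i)$. The hypothesis $n > 2\nu(m)+1$ forbids $v \geq 1$: otherwise $\nu\left(\sum_i g_i^n\right) \geq n > 2\nu(m)+1 = \nu(f)$, contradicting the assumed equality. Hence $v \leq 0$.

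Now fix $i_0$ with $\nu(g_{i_0}) = v$ and divide through by $g_{i_0}^n$. Each ratio $g_i/g_{i_0}$ has nonnegative valuation and so lies in $R$, while $\nu(f/g_{i_0}^n) = 2\nu(m)+1 - nv \geq 2\nu(m)+1$ places $f/g_{i_0}^n$ into $\mathfrak{m}^{2\nu(m)+1}$. Rewriting
$$-1 = \sum_{i \neq i_0} \left(\frac{g_i}{g_{i_0}}\right)^n - \frac{f}{g_{i_0}^n}$$
and reducing modulo $\mathfrak{m}^{2\nu(m)+1}$ presents $-1$ as a sum of at most $l-1$ $n$th powers in $R/\mathfrak{m}^{2\nu(m)+1}$. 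Because $m \mid n$, the observation from Remark~\ref{s_m} yields $s = s_m(R/\mathfrak{m}^{2\nu(m)+1}) \leq s_n(R/\mathfrak{m}^{2\nu(m)+1}) \leq l-1$, contradicting $l \leq s$. Combined with the upper bound this gives $w_n(K) = s+1$.

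The subtle step of the plan is the dichotomy $v \geq 1$ vs.\ $v \leq 0$: precisely the numerical hypothesis $n > 2\nu(m)+1$ closes off the first case, and once $v \leq 0$ the division trick moves the relation back into $R$ in such a way that the error term $f/g_{i_0}^n$ is absorbed by the kernel of reduction modulo $\mathfrak{m}^{2\nu(m)+1}$. Dropping the assumption $n > 2\nu(m)+1$ would leave the $v \geq 1$ alternative alive and the lower bound would fail, which is consistent with the different conclusion reached in the complementary case Theorem~\ref{pncvR}c--ii).
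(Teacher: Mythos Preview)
Your argument follows the same strategy as the paper: derive the upper bound from Theorem~\ref{pnK3} (using $I_{2\nu(m)}=\mathfrak{m}^{2\nu(m)+1}$) and witness the lower bound with an element of valuation $2\nu(m)+1$. The paper routes the lower-bound computation through the $\ell^*$ formula of Theorem~\ref{pnK2}, while you unpack the valuation-and-divide argument directly; the content is the same.

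One small gap: when $\charr(K)\mid n$ (so $m<n$ and automatically $\nu(m)=0$), your witness $\pi$ is not a sum of $n$th powers at all, because any such sum equals a $p^k$th power and hence has valuation divisible by $n/m$. In that case the implication ``$\ell_{n,K}(\pi)\geq s+1$'' is vacuous and does not force $w_n(K)\geq s+1$. The paper avoids this by first invoking Proposition~\ref{p_m} to reduce to $n=m>1$; after that reduction every element of $K$ is a sum of $n$th powers (Theorem~\ref{pnK2}), and your argument goes through verbatim. Inserting this one-line reduction at the start fixes the issue.
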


\begin{proof}
By Proposition \ref{p_m}, we have $w_n(K)=w_m(K)$ in case of $\charr(K)\mid n$. Hence, for the rest of the proof, we may assume that $n=m>1$.

We know from Theorem \ref{pnK3} that $w_n(K)\leq s_m(R/\mathfrak{m}^{2\nu(m)+1})+1$ (note that $\mathfrak{m}^{2\nu(m)+1}=I_{2\nu(n)}$). In order to prove our corollary, it suffices to consider any element $g\in K$ such that $\nu(g)=2\nu(n)+1$. Then $\bar{g}=\bar{0}$ and
\begin{align*}
    &\ell_{m,K}^*(g)=\ell_{m,R/\mathfrak{m}^{2\nu(n)+1}}^*(0)\\
    =&\min\left\{l\in\N_+\left|0=\sum_{i=1}^l \bar{g}_i^n\text{ for some }g_1,\ldots ,g_l\in R, g_1\not\in\mathfrak{m}\right.\right\}\\
    =&\min\left\{l\in\N_+\left|0=1+\sum_{i=2}^l \bar{g}_i^n\text{ for some }g_2,\ldots ,g_l\in R\right.\right\}\\
    =&s_n(R/\mathfrak{m}^{2\nu(m)+1})+1.
\end{align*}
This ends the proof.
\end{proof}

Theorems \ref{pnQR} and \ref{pnK3} give the following unexpected result.

\begin{cor}\label{sumof2H}
Let $R$ be a henselian local ring with the total ring of fractions $Q(R)\neq R$ and a residue field $k$. Take an odd positive integer $n>1$. Assume that $\charr(k)\nmid n$ or $R$ is rank-$1$ valuation ring with $\charr(R)\nmid n$. Then, for every element $f \in Q(R)$ there exists a presentation
$$f=f_1^n+f_2^n$$
for some $f_1,f_2 \in Q(R)$.
\end{cor}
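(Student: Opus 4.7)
The plan is to read this off Theorems \ref{pnQR} and \ref{pnK3}, using only the observation that for odd $n$ the $n$th level is always $1$. Indeed, $(-1)^n = -1$ holds in every ring, so $-1$ is itself an $n$th power and $s_n(A) = 1$ for any ring $A$; applied to the various quotient and residue rings that appear below, this will convert the bounds of the shape $s_m(\cdot)+1$ produced by those theorems into the bound $2$.

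I would split on the hypothesis. In the first case, when $\charr(k)\nmid n$, the $\charr(k)$-free part $m$ of $n$ equals $n$, which is $>1$ and odd, so $s_m(k) = 1$. Theorem \ref{pnQR} then gives $w_n(Q(R)) \leq s_m(k)+1 = 2$, and its ``moreover'' clause (which activates precisely because $\charr(k)\nmid n$) upgrades this to the stronger statement that \emph{every} $f\in Q(R)$ is a sum of at most two $n$th powers of elements of $Q(R)$.

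In the second case, $R$ is a henselian rank-$1$ valuation ring with $\charr(R)\nmid n$, so $K := Q(R)$ is its fraction field, $m=n$ is the $\charr(K)$-free part of $n$, and $s_m(R) = 1 < \infty$. Theorem \ref{pnK3} (and in particular its proof, which actually produces an explicit representation for every element of $K$ by scaling into a neighbourhood of $0$ consisting of sums of $s_m(R)+1$ $n$th powers) then yields that each $f\in K$ is a sum of at most $s_m(R/I_{2\nu(m)})+1 = 2$ $n$th powers. In either case, should the decomposition use only a single term $g^n$, I pad with $0^n$ to write $f = g^n + 0^n$ and exhibit the desired presentation $f = f_1^n + f_2^n$.

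The main thing to be careful about is not stopping at the Waring-number inequality but invoking the stronger ``every element'' parts of the two previous theorems; once one notices that these do apply under our hypotheses, the corollary follows immediately, so there is really no substantial obstacle beyond bookkeeping.
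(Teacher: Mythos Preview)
Your argument is correct and follows exactly the route the paper intends: the corollary is presented there simply as a consequence of Theorems \ref{pnQR} and \ref{pnK3}, and you have unpacked this by observing $s_n=1$ for odd $n$ and by noting that the ``every element'' assertions (the moreover clause of Theorem \ref{pnQR} in the first case, and the proof---not just the statement---of Theorem \ref{pnK3} in the second) are what is actually needed. If anything, your write-up is more careful than the paper's one-line citation, since you explicitly flag that in the rank-$1$ valuation case the statement of Theorem \ref{pnK3} alone only bounds $w_n(K)$, whereas the representability of every element comes from its proof.
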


\begin{remark} {\rm
If $\charr(R)=\charr(K)\mid n$, then it is possible that not every element of $R$ or $K$, respectively, can be written as a sum of $n$th powers in $R$ or $K$, respectively. Let $R=k[[x]]$, $K=k((x))$, $\charr(R)=\charr(K)=\charr(k)=p$ and $n=p^tm$ for some $t,m\in\N_+$ with $p\nmid m$. Denote the subfield of $p^t$th powers of $k$ by $k_t$. Then the set of sums of $l$ $n$th powers in $R$ ($K$, respectively) is the set of $l$ $m$th powers in $k_t[[x^{p^t}]]$ ($k_t((x^{p^t}))$, respectively), so it is not the whole $R$ ($K$, respectively).}
\end{remark}

\section{Waring numbers of henselian DVRs and their fields of fractions with infinite $n$th level}

\begin{thm}\label{dcvr}
Let $R$ be a DVR with the field of fractions $K$, the maximal ideal $\mathfrak{m}$ and the residue field $k$. Take a positive integer $n$ such that $s_n(k)=\infty$. Then $$w_n(K)=w_n(R)\geq w_n(k),$$ where the equality holds if $R$ is henselian.
\end{thm}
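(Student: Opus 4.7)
A key preliminary observation: $s_n(k)=\infty$ forces $\charr(k)=0$, because in any field of prime characteristic $p$ one has $-1=\underbrace{1^n+\cdots+1^n}_{p-1\text{ times}}$, so $s_n(k)\leq p-1$. Consequently $n\in R^\times$ and ordinary Hensel's lemma will apply in $R$ without the refined valuation hypothesis. Lemma \ref{surjection} applied to the reduction $R\twoheadrightarrow k$ gives $w_n(k)\leq w_n(R)$, while the localization bound mentioned in the introduction yields $w_n(K)\leq w_n(R)$; it remains to prove $w_n(R)\leq w_n(K)$ unconditionally and, in the henselian case, $w_n(R)\leq w_n(k)$.

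\textbf{Key lemma $(\star)$.} The crux of the proof is the following: \emph{if $f\in R$ and $f=\sum_{i=1}^{l}f_i^n$ with $f_i\in K$, then automatically all $f_i\in R$ and $n\mid\nu(f)$.} Let $t$ be a uniformizer, $\mu=\min_i\nu(f_i)$, and $g=f/t^{n\mu}=\sum_i(f_i/t^\mu)^n$. By the choice of $\mu$, each $f_i/t^\mu$ belongs to $R$ with at least one a unit, and the ultrametric inequality forces $\nu(f)\geq n\mu$. If the inequality were strict, then $g\in\mathfrak{m}$, and reducing the identity for $g$ modulo $\mathfrak{m}$ produces $0=\sum_i\overline{(f_i/t^\mu)}^{\,n}$ with a nonzero summand; dividing by that summand gives $-1$ as a sum of at most $l-1$ $n$th powers in $k$, contradicting $s_n(k)=\infty$. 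Hence $\nu(f)=n\mu$, so $\mu\geq 0$ and all $f_i\in R$. Applied to a shortest $K$-representation of a given $f\in R$, $(\star)$ yields $\ell_n^R(f)\leq\ell_n^K(f)$, so $w_n(R)\leq w_n(K)$ and the first equality of the theorem follows.

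\textbf{Henselian bound and main obstacle.} Now suppose $R$ is henselian and let $f\in R$ be a sum of $n$th powers. If $f\in R^\times$: write $\bar f=\sum_{i=1}^{l}\bar h_i^{\,n}$ with $l\leq w_n(k)$ and, since $\bar f\neq 0$, arrange so that $\bar h_1\neq 0$; lift each $h_i$ to $R$ and apply Hensel's lemma to $p(x)=x^n-\bigl(f-\sum_{i\geq 2}h_i^n\bigr)$ at $b=h_1$. One has $p(h_1)\in\mathfrak{m}$ and $p'(h_1)=nh_1^{n-1}\in R^\times$ by the opening observation, so there exists $h\in R$ with $h^n=f-\sum_{i\geq 2}h_i^n$, giving $\ell_n^R(f)\leq w_n(k)$. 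If $f\in\mathfrak{m}$: lemma $(\star)$ yields $n\mid\nu(f)$ and exhibits $u:=f/t^{\nu(f)}\in R^\times$ as a sum of $n$th powers in $R$, so the unit case gives $\ell_n^R(u)\leq w_n(k)$; multiplying a short expression of $u$ by $(t^{\nu(f)/n})^n$ produces one of the same length for $f$. The main obstacle is really $(\star)$, which must extract both the integrality of all summands and the divisibility of $\nu(f)$ by $n$ from the single hypothesis $s_n(k)=\infty$; everything downstream is then routine, precisely because $\charr(k)=0$ makes $n$ a unit and unlocks standard Hensel lifting.
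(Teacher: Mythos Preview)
Your proof is correct and follows essentially the same route as the paper's: both hinge on the observation that $s_n(k)=\infty$ forbids nontrivial relations $0=\sum \bar g_i^n$ in $k$, which forces any $K$-representation of an element of $R$ to already lie in $R$ and the valuation of any sum of $n$th powers to be divisible by $n$; the henselian step is then a direct Hensel lift of a shortest representation of the residue, exactly as in the paper (which states this last step more tersely as ``$\ell_n(f_0)=\ell_n(\bar f_0)$'').
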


\begin{proof}
    
    
    
    
Since $s_n(k)=\infty$, we deal with a real field $k$. In particular, $\charr(R)=\charr(k)=0$. 

The inequality $w_n(R)\geq w_n(k)$ follows from Lemma \ref{surjection}. 

Let $f\in K$ be a sum of $n$th powers of elements in $K$. $f$ can be written as $f=\pi^{tn}f_0$, where $\pi$ is a uniformizer of $R$, $f_0 \in R$ with $\nu(f_0)=0$ and $t\in\Z$. This is because $0$ cannot be written as a sum of finitely many nonzero $n$th powers in $k$. We thus have $\ell_n(f)=\ell_n(f_0)$, where we consider the $n$-length with respect to $K$. If $f_0=\sum_{i=1}^l g_i^n$ for some $g_1,\ldots,g_l\in K$, then $g_1,\ldots,g_l\in R$ as $0$ cannot be written as a sum of finitely many nonzero $n$th powers in $k$. This shows that $w_n(K)=w_n(R)$. If we assume additionally that $R$ is henselian, then it follows that $\ell_n(f_0)=\ell_n(\Bar{f}_0)$. This proves the equality $w_n(R)=w_n(k)$.
\end{proof}

\begin{remark} {\rm
The assumption in the above theorem that $R$ is a DVR is essential. We know that the ring $R_s=\R[[x_1,\ldots,x_s]]$, $s\geq 1$, is henselian and
$$w_2(R_s)=\begin{cases}
1,&\text{ if }s=1\\
2,&\text{ if }s=2\\
\infty,&\text{ if }s>2
\end{cases}$$
(cf. \cite{CLDR}) meanwhile $s_2(\R)=\infty$ and $w_2(\R)=1$. Moreover, we have $K_s=\R((x_1,\ldots,x_s))$, $s\geq 1$, and 
$w_2(K_1)=1$, $w_2(K_2)=2$ (cf. \cite[Corollary 5.4]{CLDR}), $w_2(K_3)=4$ (cf. \cite[Theorem 1.2]{hu}). These are the only known values of $w_2(K_s)$. As was mentioned in the Introduction, it is known
 that $w_2(\R(x_1,\dots,x_s) \leq 2^s$. However, under the assumption $w_2(\R(x_1,\dots,x_s))=2^s$, one can show $w_2(\R((x_1,\dots,x_{s+1})))=2^s$ (see \cite[Corollary 2.3]{hu}).}


\end{remark}

\section{Applications}
In this Section we provide some applications of the results proved before. Firstly, we consider the Waring numbers of the rings of formal power series. Secondly, we obtain some lower bound on Waring numbers of thee coordinate ring of an irreducible algebraic set. Next, we deal with the problem of computing Waring numbers of the ring of $p$-adic integers $\Z_p$ and its field of fractions $\Q_p$. At the end, we study the relations between Waring numbers of local rings and their henselizations and completions. 






\subsection{Waring numbers of the rings of power series.}

\begin{thm}\label{series}
Let $k$ be a field, $n,s$ be positive integers and $m$ be a $\charr(k)$-free part of $n$. 
\begin{enumerate}
    \item[a)] We have
    \begin{equation}\label{eq0}
        s_n(k[[x_1,\dots x_s]])=s_n(k((x_1,\dots x_s)))= s_n(k((x_1))\dots ((x_s)))=s_m(k).
    \end{equation}
    \item[b)] If $s_n(k)<\infty$, then
    \begin{equation}\label{eq1}
        w_n(k[[x_1,\dots, x_s]])= \begin{cases}
        \max \{w_m(k), s_m(k)+1 \} & \text{ for } m>1    \\
        1 & \text{ for } m=1
        \end{cases},
    \end{equation}
    \begin{equation}\label{eq2}
    w_n(k((x_1,\dots x_s)))=w_n(k((x_1))\dots ((x_s)))=\begin{cases}
    s_m(k)+1 & \text{ for } m>1\\
    1 & \text{ for } m=1 \end{cases},
    \end{equation}
    \begin{equation}\label{eq3}
         w_n(k((x_1))\dots ((x_s))[[x_{s+1}]])=\begin{cases}
        s_m(k)+1 & \text{ for } m>1    \\
        1 & \text{ for } m=1
        \end{cases}.
    \end{equation}
    
\end{enumerate}
\end{thm}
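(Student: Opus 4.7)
The plan is to deduce each identity by specializing one of the general theorems of the previous section to the appropriate local ring or field of fractions, with part (a) feeding into part (b).

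For part (a), the key players are the complete local integral domain $R_s = k[[x_1,\ldots,x_s]]$ (henselian, with $\mathfrak{m}_s = (x_1,\ldots,x_s) \neq \mathfrak{m}_s^2$ and residue field $k$) and, iteratively, the henselian DVRs $k((x_1))\cdots((x_{i-1}))[[x_i]]$ whose residue field is $k((x_1))\cdots((x_{i-1}))$ and fraction field is $k((x_1))\cdots((x_i))$. Expanding a multivariate power series stepwise embeds $R_s$ into $k((x_1))\cdots((x_s))$, and this embedding extends to $k((x_1,\ldots,x_s))$. Along the chain
\[
R_s \subset k((x_1,\ldots,x_s)) \subset k((x_1))\cdots((x_s))
\]
the level $s_n$ is non-increasing. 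Theorem \ref{pnR}(a) gives $s_n(R_s) \geq s_m(k)$, and $s$ successive applications of Theorem \ref{sn} to the DVRs above yield $s_n(k((x_1))\cdots((x_s))) = s_m(k)$ (the residue-field characteristic stays $\charr(k)$, so $m$ is preserved at every step). If $s_m(k) < \infty$, Theorem \ref{pnR}(b) supplies the matching upper bound on $s_n(R_s)$; if $s_m(k) = \infty$, the chain forces every term to be $\infty$. Either way part (a) follows.

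For part (b), identity \eqref{eq1} comes from applying Corollary \ref{pnR=...} directly to $R_s$, whose hypotheses (henselian, reduced, $\mathfrak{m}_s \neq \mathfrak{m}_s^2$, $\charr(R_s) = \charr(k)$, and $s_m(k) < \infty$ via Remark \ref{s_m}) are all satisfied. Identity \eqref{eq2} for the iterated field $k((x_1))\cdots((x_s))$ follows from Theorem \ref{pnQR} applied to the henselian DVR $k((x_1))\cdots((x_{s-1}))[[x_s]]$, whose residue field has level $s_m(k)$ by part (a). Identity \eqref{eq3} is then Corollary \ref{pnR=...} applied to the henselian DVR $k((x_1))\cdots((x_s))[[x_{s+1}]]$: the residue field $k((x_1))\cdots((x_s))$ has $w_m = s_m(k)+1$ by the previous step and $s_m = s_m(k)$ by (a), so the maximum in Corollary \ref{pnR=...} simplifies to $s_m(k)+1$.

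The main obstacle is the remaining case of \eqref{eq2}, namely $k((x_1,\ldots,x_s))$ with $s > 1$: here $R_s$ is not a valuation ring, so Theorem \ref{pnQR} is not immediate. The remedy is to invoke its stronger equality clause, which only requires a nontrivial $\Z$-valued valuation $\nu$ on $Q(R_s)$ with $R_s \subset R_\nu$ together with the equality $s_m(Q(R_s)) = s_m(R_s)$. The latter is already established in part (a); for the former, I would restrict to $k((x_1,\ldots,x_s))$ the $x_s$-adic valuation on $k((x_1))\cdots((x_s))$ via the embedding from (a)—this valuation has value group $\Z$, sends $x_s$ to $1$, and is non-negative on $R_s$. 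With both hypotheses verified, Theorem \ref{pnQR} yields $w_n(k((x_1,\ldots,x_s))) = s_m(k)+1$ for $m > 1$. The trivial $m=1$ cases throughout reduce by Proposition \ref{p_m}, combined with reducedness of each ring in question, to the observation that the relevant element is its own first power.
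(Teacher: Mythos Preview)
Your proposal is correct and follows essentially the same route as the paper: part~(a) via the chain $k[[x_1,\ldots,x_s]]\subset k((x_1,\ldots,x_s))\subset k((x_1))\cdots((x_s))$ together with Theorem~\ref{sn} applied inductively, and part~(b) via Corollary~\ref{pnR=...}/Theorem~\ref{pnR} for the power-series rings and Theorem~\ref{pnQR} for the fraction fields, including the use of the $x_s$-adic valuation to handle $k((x_1,\ldots,x_s))$ when $s>1$. Your write-up is in fact somewhat more explicit than the paper's in verifying the hypotheses (reducedness, $\mathfrak{m}\neq\mathfrak{m}^2$, $s_m(Q(R_s))=s_m(R_s)$) needed to invoke the equality clause of Theorem~\ref{pnQR}.
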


\begin{proof}
By Proposition \ref{p_m} we have $s_m(k)=s_n(k)$.

Equality \eqref{eq1} follows from Theorem \ref{pnR}. 

By simple induction on $s$ and use of Theorem \ref{sn} one can show that
$$s_n(k((x_1))\dots ((x_s)))=s_n(k).$$ 
Moreover, we have the following inclusions
$$k\subset k[[x_1,\dots,x_s]]\subset k((x_1,\dots,x_s)) \subset k((x_1))\dots ((x_s))$$
which implies the inequalities
$$s_n(k)\geq s_n(k[[x_1,\dots,x_s]]) \geq s_n(k((x_1,\dots,x_s)))\geq s_n(k((x_1))\dots((x_s))). $$
This proves \eqref{eq0}.

The formula for $w_n(k((x_1,\dots,x_s)))$ (resp. $w_n(k((x_1))\dots ((x_s)))$ ) follows from Theorem \ref{pnQR} by taking $R=k[[x_1,\dots,x_s]]$ (resp. $R=k((x_1))\dots ((x_{s-1}))[[x_s]])$ and $R_\nu$ where $\nu=\nu_{x_s}$ is the $x_s$-adic valuation. Clearly $k[[x_1,\dots,x_s]] \subset R_\nu$ (resp. $k((x_1))\dots ((x_{s-1}))[[x_s]] \subset R_\nu$), hence the aforementioned Theorem can be applied.

Equality \eqref{eq3} follows from Theorem \ref{pnR} and formulae \eqref{eq0} and \eqref{eq2}.
\end{proof}

\begin{thm}\label{seriesinfinitelevel}
Let $k$ be a field such that $s_n(k)=\infty$. Then the following holds
\begin{equation}\label{eq5}
w_n(k((x_1))\dots ((x_{s-1}))[[x_s]])=w_n(k((x_1))\dots ((x_s)))=w_n(k).
\end{equation}
\begin{equation}\label{eq6}
        w_n(k[[x_1,\ldots,x_s]])\geq w_n(k((x_1,\ldots,x_s)))\geq w_n(k),
    \end{equation}
\end{thm}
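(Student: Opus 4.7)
The plan for \eqref{eq5} is a straightforward induction on $s$. Set $K_s = k((x_1))\cdots((x_s))$ with $K_0 = k$, and $R_s = K_{s-1}[[x_s]]$, so that $R_s$ is a complete (hence henselian) DVR with field of fractions $K_s$ and residue field $K_{s-1}$. By Theorem \ref{series}(a) we have $s_n(K_{s-1}) = s_n(k) = \infty$, so Theorem \ref{dcvr} applies to $R_s$ and yields $w_n(R_s) = w_n(K_s) = w_n(K_{s-1})$. Iterating this chain of equalities down to $K_0 = k$ proves \eqref{eq5}.

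For \eqref{eq6}, the first inequality $w_n(k[[x_1,\ldots,x_s]]) \geq w_n(k((x_1,\ldots,x_s)))$ is immediate from the general localization bound $w_n(Q(R)) \leq w_n(R)$ recorded in the preliminaries, applied to $R = k[[x_1,\ldots,x_s]]$. The second inequality is trickier because enlarging a field can in principle shorten sum-of-$n$th-power representations. My strategy is to fix $f \in k$ and prove the element-wise equality $\ell_{n,k((x_1,\ldots,x_s))}(f) = \ell_{n,k}(f)$; taking the supremum over $f \in k$ that are sums of $n$th powers then gives $w_n(k((x_1,\ldots,x_s))) \geq w_n(k)$.

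To establish the element-wise equality, use the chain of inclusions $k \subset k((x_1,\ldots,x_s)) \subset K_s$, which gives $\ell_{n,K_s}(f) \leq \ell_{n,k((x_1,\ldots,x_s))}(f) \leq \ell_{n,k}(f)$. It then suffices to prove $\ell_{n,K_s}(f) = \ell_{n,k}(f)$ for $f \in k$. For this I will extract from the proof of Theorem \ref{dcvr} the stronger element-wise fact behind the stated equality of Waring numbers: for a henselian DVR $R$ with field of fractions $K$ and residue field $k'$ satisfying $s_n(k') = \infty$, and any $f_0 \in R$ with $\nu(f_0) = 0$, one has $\ell_{n,K}(f_0) = \ell_{n,k'}(\bar f_0)$. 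The relevant pieces in that proof are (i) the argument that $s_n(k') = \infty$ forces every $g_i$ in a decomposition $f_0 = \sum g_i^n$ (over $K$) to lie in $R$, and (ii) henselianity, which transports decompositions in $k'$ back to $R$. Applied to $R_s$ with input $f \in k \subset K_{s-1}$ (for which $\nu_{x_s}(f)=0$ and the reduction is again $f$), this yields $\ell_{n,K_s}(f) = \ell_{n,K_{s-1}}(f)$, and iterating $s$ times descends to $\ell_{n,K_s}(f) = \ell_{n,k}(f)$.

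The main subtlety is precisely this extraction step: Theorem \ref{dcvr} is phrased in terms of $w_n$ only, whereas for the lower bound in \eqref{eq6} one needs the finer element-wise equality of $n$-lengths, which must be read off from its proof and then iterated through the tower of complete DVRs $R_1, R_2, \ldots, R_s$.
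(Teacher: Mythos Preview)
Your proposal is correct and follows essentially the same route as the paper. The paper likewise derives \eqref{eq5} by applying Theorem~\ref{dcvr} inductively to the tower of complete DVRs $K_{s-1}[[x_s]]$, and proves the second inequality in \eqref{eq6} exactly as you do: by extracting from the proof of Theorem~\ref{dcvr} the element-wise fact that $\ell_n(a)$ is preserved along the chain $k\subset k((x_1))\subset\cdots\subset k((x_1))\cdots((x_s))$, and then sandwiching $k((x_1,\ldots,x_s))$ between $k$ and $k((x_1))\cdots((x_s))$ via Lemma~\ref{surjection}.
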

\begin{proof}
The equality \eqref{eq5} follows directly from Theorem \ref{dcvr}.
For the proof of \eqref{eq6} let us see that the inequality $$w_n(k[[x_1,\ldots,x_s]])\geq w_n(k((x_1,\ldots,x_s)))$$ is clear. Consider now any element $a \in k$ of finite $n$-length. Since $k$ is the residue field of $k[[x_1]]$, then by the proof of Theorem \ref{dcvr}, the number $\ell_n(a)$ remains constant in $k$, $k[[x_1]]$ and $k((x_1))$, hence by a simple induction, also in $k((x_1))\dots ((x_s))$.
We have the following chain of inclusions
$k \subset k[[x_1,\dots,x_s]] \subset k((x_1,\dots,x_s))\subset k((x_1))\dots((x_s)).$

By Lemma \ref{surjection}, $n$-length of an element cannot increase via a homomorphism of rings. By the above discussion, $n$-length of any element $a \in k$ is the same in $k$ and in $k((x_1))\dots ((x_s))$, hence also in $k((x_1,\dots,x_s))$. The proof of \eqref{eq6} is finished.

\end{proof}




\subsection{Waring numbers of coordinate rings.}
Let $V\subset k^s$ be an algebraic set. Denote by $I(V)\subset k[x_1,x_2,\dots, x_s]$ the ideal of polynomials vanishing on $V$. Define 
$$k[V]:=k[x_1,x_2,\dots, x_s]/I(V),$$
to be the coordinate ring of $V$. We put $k(V)$ to be the field of fractions of $k[V]$, provided that $V$ is an irreducible algebraic set.

We say that the point $x\in V$ is a regular point, if the ring $k[V]_{\mathfrak{m}_x}$ is a regular local ring, where $\mathfrak{m}_x$ is the maximal ideal of polynomial functions vanishing in $x$.
For rudiments of commutative algebra we refer the reader to \cite[Chapter 7,19]{eisenbud}.

\begin{thm}\label{Main Theorem}
Let $V$ be an irreducible algebraic subset of $k^s$, different from the point, which admits a regular point. 
\begin{itemize}
    \item[a)] If $s_n(k)<\infty$, then
$$w_n(k[V])\geq \max \{w_{m}(k), s_{m}(k)+1 \},$$
$$w_n(k(V)) \geq s_m(k)+1,$$
where $m$ is the $\charr(k)$-free part of $n$.
    \item[b)] If $s_n(k)=\infty$, then
    $$w_n(k[V]) \geq w_n(k(V))\geq  w_n(k).$$
\end{itemize}

\begin{proof}
The ring $k[V]$ is an integral domain, as $V$ is irreducible. Hence, if $\charr(k)>0$ then the Frobenius endomorphism is injective, so by Proposition \ref{p_m}, we may assume $n=m$.

Let $p$ be a regular point of $V$ and $\mathfrak{m}=(f_1,f_2,\dots, f_l)$ be a maximal ideal of regular functions vanishing at $p$, with the smallest possible $l$. In order to prove the thesis it is enough to find elements of appropriate $n$-length.
Consider the following commutative diagram.

\begin{tikzcd}
k[V] \arrow{d} \arrow{rr} \arrow[rrdd, "\psi"] & &  k(V) \arrow{dd} \\
k[V]_\mathfrak{m} \arrow[d, "j"]   & & \\
 \widehat{k[V]_{\mathfrak{m}}} \arrow[r, "\phi"] & k[[f_1,f_2,\dots,f_l]]\arrow[r,"\iota"] & k((f_1))((f_2))\dots ((f_l)) \\
\end{tikzcd}

Every map in the above diagram is either monomorphism or isomorphism, which we will now justify.
Take the regular local ring $k[V]_{\mathfrak{m}}$ and  let $\widehat{k[V]_{\mathfrak{m}}} $ be the $\mathfrak{m}$-adic completion of $k[V]_{\mathfrak{m}}$. The map $j$ is a monomorphism, since we are completing a local ring along its maximal ideal.
Completion of a regular local ring is again a regular local ring. By the Cohen Structure Theorem \cite[Theorem 15]{cohen}, every complete regular local ring containing a field is isomorphic to the ring of formal power series over this field. In our case 
$\widehat{k[V]_{\mathfrak{m}}} \cong k[[f_1,f_2,\dots, f_l]] $. Hence, $\phi$ is an isomorphism, which can be chosen to preserve generators of the maximal ideal.
Consequently, we can embed the ring $k[[f_1,f_2,\dots, f_l]]$ into the field of iterated Laurent series $k((f_1))((f_2))\dots ((f_l))$. 

By Lemma \ref{surjection}, the $n$-length of an element cannot increase via a monomorphism.
If $w_n(k)=\infty$ take any positive integer $D$ and $a \in k$ such that $\ell_{n,k}(a)>D$. If $w_n(k)< \infty$, it is enough to take $a \in k$ such that $ \ell_{n,k}(a)=w_n(k) $. Part a) follows from considering the images of $a$ and $f_l$ in the ring $k[[f_1,f_2,\dots, f_l]]$ and $f_l$ in $k((f_1))((f_2))\dots ((f_l))$. Part b) follows from considering the image of $a$ in $k((f_1))((f_2))\dots ((f_l))$. 
\end{proof}
\end{thm}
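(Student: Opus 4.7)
The plan is to work at a regular point: pass from $k[V]$ to the local ring at a regular point $p$, complete to get a power series ring over $k$ by Cohen's Structure Theorem, and then exploit the Waring-number results for power series / iterated Laurent series rings from Theorem \ref{series} (or Theorem \ref{seriesinfinitelevel} in the real case) via Lemma \ref{surjection}, which says ring maps cannot increase $n$-length.

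First I would reduce to $n = m$: because $V$ is irreducible $k[V]$ is a domain, hence reduced, so Proposition \ref{p_m} shows the Waring numbers of $k[V]$ and $k(V)$ do not change when $n$ is replaced by $m$. Then fix a regular point $p \in V$ and a minimal generating set $\mathfrak{m} = (f_1, \ldots, f_l)$ of the corresponding maximal ideal of $k[V]$; since $V$ is not a point, $l \geq 1$. Regularity of $k[V]_\mathfrak{m}$ plus Cohen's Structure Theorem yields $\widehat{k[V]_\mathfrak{m}} \cong k[[f_1, \ldots, f_l]]$ with maximal ideal generated by the $f_i$, and Krull's intersection theorem makes the completion map injective, giving the chain
\[
k[V] \hookrightarrow k[V]_\mathfrak{m} \hookrightarrow k[[f_1, \ldots, f_l]] \hookrightarrow k((f_1))\cdots((f_l)),
\]
and, after passage to fraction fields, an injection $k(V) \hookrightarrow k((f_1))\cdots((f_l))$.

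For part (a), the bound $w_n(k[V]) \geq w_m(k)$ is immediate from Lemma \ref{surjection} applied to the surjection $k[V] \to k[V]/\mathfrak{m} \cong k$: any $a \in k$ of $n$-length $w_m(k)$ (finite because $s_m(k)<\infty$ and $\charr(k)\nmid m$) retains length at least $w_m(k)$ in $k[V]$. The bound $w_n(k[V]) \geq s_m(k)+1$ comes from considering $f_l$: inside the henselian ring $k[[f_1,\ldots,f_l]]$ one has $f_l \in \mathfrak{m}\setminus\mathfrak{m}^2$, so Theorem \ref{pnR}(c) forces $\ell_n(f_l) \geq s_m(k)+1$ there, and Lemma \ref{surjection} pulls this bound back to $k[V]$. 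For the analogous bound on $k(V)$, I would argue directly in $K' = k((f_1))\cdots((f_l))$ equipped with the $f_l$-adic valuation $\nu$: if $f_l = \sum_{i=1}^t g_i^n$ with $g_i \in K'$ and $t \leq s_m(k)$, the index $i_0$ realizing $\min_i \nu(g_i)$ must satisfy $\nu(g_{i_0}) = 0$ (negative valuation contradicts $\nu(f_l) = 1$, positive valuation forces $\nu(\sum g_i^n) \geq n > 1$). Dividing by $g_{i_0}^n$ and reducing modulo the maximal ideal of the DVR $k((f_1))\cdots((f_{l-1}))[[f_l]]$ expresses $-1$ as a sum of at most $t-1 < s_m(k)$ $n$th powers in $k((f_1))\cdots((f_{l-1}))$, contradicting the equality $s_m(k((f_1))\cdots((f_{l-1}))) = s_m(k)$ from Theorem \ref{series}.

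For part (b), iterated application of Theorem \ref{dcvr} shows that every $a \in k$ retains its $n$-length under $k \hookrightarrow k((f_1))\cdots((f_l))$, so choosing $a$ with $\ell_{n,k}(a) = w_n(k)$ (or arbitrarily large if $w_n(k) = \infty$) and invoking Lemma \ref{surjection} along $k(V) \hookrightarrow k((f_1))\cdots((f_l))$ yields $w_n(k(V)) \geq w_n(k)$; the inequality $w_n(k[V]) \geq w_n(k(V))$ is the general fact that localization cannot raise the Waring number. The main subtlety is the lower bound $\ell_n(f_l) \geq s_m(k)+1$ in $k(V)$: Theorem \ref{pnR}(c) applies only in local rings, and passing to a fraction field could in principle reduce the length, so the dedicated valuation argument above—relying on $s_m$ of the residue field of the DVR remaining $s_m(k)$—is what prevents this collapse.
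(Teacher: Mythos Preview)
Your proof follows the paper's strategy exactly: pass to a regular point, complete via Cohen's Structure Theorem to a power series ring, embed further into iterated Laurent series, and then bound $n$-lengths of $a\in k$ and $f_l$ in $k[V]$ and $k(V)$ by their lengths in these larger rings via Lemma \ref{surjection} (the paper compresses your explicit valuation computation for $f_l$ in $k((f_1))\cdots((f_l))$ into a bare reference to Theorem \ref{series}). One small slip: your assertion that $\nu(g_{i_0})=0$ because ``negative valuation contradicts $\nu(f_l)=1$'' is not justified as stated---but the subsequent division by $g_{i_0}^n$ and reduction modulo the maximal ideal works verbatim whenever $\nu(g_{i_0})\le 0$, so the argument is unaffected.
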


In case of $s_n(k)<\infty$, if we further assume that $\charr (k)$ is coprime with $n!$, then $w_n(k(V))$ is finite (cf. Introduction).

\begin{remark} {\rm
Theorem \ref{Main Theorem} is not sharp if $s_n(k)=\infty$. If $\dim V \geq 3$, then $w_2(k[V])=\infty$ (see \cite{CLDR}) meanwhile $w_2(k)$ may be finite. }
\end{remark}

\begin{cor}\label{algset2}
Let $V$ be an irreducible algebraic subset of $k^s$, different from the point, which admits a regular point. Assume that $\charr (k) \neq 2$ and $s_2(k)<\infty$. Then $w_2(k[V])=w_2(k(V))=s_2(k)+1$.
\begin{proof}

Let $a$ be an element in $k[V]$ or $k(V)$. We then have
$$a=\left(\frac{a+1}{2}\right)^2-\left(\frac{a-1}{2}\right)^2.$$
Hence, the $2$-length of any element is at most $s_2(k)+1$. The lower bound follows directly from Theorem \ref{Main Theorem}.

\end{proof}
\end{cor}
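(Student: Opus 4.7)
The plan is to sandwich $w_2(k[V])$ and $w_2(k(V))$ between $s_2(k)+1$ from below and above.

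For the lower bound, I will invoke Theorem \ref{Main Theorem} a) directly. Since $\charr(k)\neq 2$, the $\charr(k)$-free part of $n=2$ is $m=2$ itself, so the theorem yields $w_2(k[V])\geq\max\{w_2(k),s_2(k)+1\}\geq s_2(k)+1$ and $w_2(k(V))\geq s_2(k)+1$. Also, by Lemma \ref{surjection} applied to the localization map $k[V]\to k(V)$, we have $w_2(k(V))\leq w_2(k[V])$, so it suffices to produce the upper bound only for $w_2(k[V])$, and equality across the chain will be automatic.

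For the upper bound, I will use the classical identity
$$a=\left(\frac{a+1}{2}\right)^2-\left(\frac{a-1}{2}\right)^2,$$
which is valid since $2$ is invertible in $k$ and hence in $k[V]$ and $k(V)$. Because $s_2(k)<\infty$, I can write $-1=\sum_{i=1}^{s_2(k)}c_i^2$ with $c_i\in k\subset k[V]$. Multiplying through by $\left(\tfrac{a-1}{2}\right)^2$ and substituting gives
$$a=\left(\frac{a+1}{2}\right)^2+\sum_{i=1}^{s_2(k)}\left(c_i\cdot\frac{a-1}{2}\right)^2,$$
exhibiting any $a\in k[V]$ (or $k(V)$) as a sum of exactly $s_2(k)+1$ squares. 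In particular every element is a sum of squares, and $w_2(k[V])\leq s_2(k)+1$.

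Combining the two bounds gives $s_2(k)+1\leq w_2(k(V))\leq w_2(k[V])\leq s_2(k)+1$, which is the desired equality. I do not expect any genuine obstacle here: Theorem \ref{Main Theorem} already does the geometric/structural work of producing elements of large $n$-length, and the upper bound is a one-line algebraic identity; the only subtlety is noticing that $-1$'s representation as a sum of squares in $k$ transfers verbatim to $k[V]$ and $k(V)$ via the inclusion $k\hookrightarrow k[V]\hookrightarrow k(V)$.
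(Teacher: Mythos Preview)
Your proof is correct and follows essentially the same approach as the paper: the upper bound via the identity $a=\left(\frac{a+1}{2}\right)^2-\left(\frac{a-1}{2}\right)^2$ combined with a representation of $-1$ as $s_2(k)$ squares, and the lower bound via Theorem~\ref{Main Theorem}. You have simply spelled out more explicitly the step turning $-\left(\frac{a-1}{2}\right)^2$ into a sum of $s_2(k)$ squares and the chain $w_2(k(V))\leq w_2(k[V])$, but the argument is the same.
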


\begin{remark} {\rm
If $V$ is an irreducible algebraic subset of $k^s$, different from the point, which admits a regular point, and $\charr (k) = 2$, then by Proposition \ref{free part} we have $$w_2(k(V))=w_2(k[V])=w_1(k(V))=w_1(k[V])=1.$$ }
\end{remark}

\subsection{Waring numbers of rings and fields of $p$-adic numbers}\label{p-adic}
In this Section we discuss the Waring numbers of $p$-adic rings and fields. We stress that the results concerning $p$-adic integers are not new. Indeed, they can be easily derived from papers \cite{hardy littlewood 1, hardy littlewood 2, hardy littlewood 3} of Hardy and Littlewood on analytic number theory. It is important to note that the $\Gamma(k)$ from the above references can be rephrased in terms of the Waring numbers in the following way (cf. \cite{elsholtz})
$$\Gamma(k)=\max_{p-prime} w_k(\Z_p).$$
On the other hand, the authors are not aware if the results concerning $p$-adic fields are present in the literature (or can be easily deduced).

To keep the paper self contained as much as possible, we include algebraic proofs of all the facts presented here.

At first, we present a result on $n$th Waring numbers of fields of fractions of henselian DVRs, where $n$ is odd.

\begin{thm}\label{dvr -odd}
Let $R$ be a henselian DVR with quotient field $K$ and $n>1$ be an odd positive integer. Denote by $m$ the $\charr(K)$-free part of $n$. Then

$$w_n(K)=
\begin{cases}
1 \ \ if \ \   m=1, \\
2 \ \ if  \ \  m>1. \\
\end{cases}
$$

\begin{proof}

By Proposition \ref{p_m}, we have $w_n(K)=w_m(K)$ in case of $\charr(K)\mid n$. Hence, for the rest of the proof, we may assume that $n=m$.

If $n=1$, then clearly $w_n(K)=1$. If $n>1$, then by Theorem \ref{pnK3} we have $w_n(K)\leq 2.$ Since the uniformizer cannot be an $n$th power, we obtain $w_n(K)=2.$

\end{proof}
\end{thm}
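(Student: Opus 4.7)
The plan is to prove this by reducing to the characteristic-coprime case and then combining an upper bound from Theorem \ref{pnK3} with a cheap lower bound coming from the valuation.

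First I would apply Proposition \ref{p_m} to the field $K$. Since $K$ is a field, it is reduced, so the proposition gives $w_n(K) = w_m(K)$ whenever $\charr(K) \mid n$. Writing $n = \charr(K)^k m$, this reduction lets me assume $n = m$ for the remainder of the argument, i.e.\ that $n$ is not divisible by $\charr(K)$. The case $m = n = 1$ is immediate since every element is a first power.

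Next, assuming $n = m > 1$ and $n$ odd, I would invoke Theorem \ref{pnK3}, which asserts $w_n(K) \leq s_m(R/I_{2\nu(m)}) + 1$. Because $n$ is odd, the identity $-1 = (-1)^n$ shows that $s_n$ of any ring where this makes sense is $1$; in particular $s_m(R/I_{2\nu(m)}) = 1$. Therefore $w_n(K) \leq 2$. Note that the proof of Theorem \ref{pnK3} actually establishes that every element of $K$ is a sum of at most $s_n(R)+1 = 2$ $n$th powers, which I will use for the matching lower bound.

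Finally, for the lower bound, I would exhibit an element of $n$-length exactly $2$. A uniformizer $\pi$ of $R$ has $\nu(\pi) = 1$, whereas any $n$th power in $K$ has valuation divisible by $n > 1$, so $\pi$ is not a single $n$th power. But by the preceding paragraph, $\pi$ is a sum of at most two $n$th powers in $K$, hence $\ell_n(\pi) = 2$. This yields $w_n(K) \geq 2$, and combined with the upper bound we conclude $w_n(K) = 2$. The only mildly delicate point is making sure the henselianity hypothesis is doing its job inside the invocation of Theorem \ref{pnK3}; all other steps are formal.
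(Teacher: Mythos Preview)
Your proposal is correct and follows essentially the same approach as the paper: reduce to $n=m$ via Proposition~\ref{p_m}, use Theorem~\ref{pnK3} together with $s_m=1$ for odd $m$ to get $w_n(K)\leq 2$, and rule out $w_n(K)=1$ because the uniformizer has valuation not divisible by $n$. The extra care you take in noting that $\pi$ is indeed a sum of $n$th powers (so that $\ell_n(\pi)=2$ is meaningful for the lower bound) is a welcome detail that the paper leaves implicit.
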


\begin{remark} {\rm
In particular, for any prime number $p$ and any odd integer $n$ we have $w_n(\Q_p)=2$. }
\end{remark}

Now, we give formulas for $n$th Waring numbers of $\Z_p$ and $\Q_p$ in some cases, when $p\mid n$.

\begin{thm}\label{phipk}
Let $p$ be an odd prime number, $k$ be a positive integer and $d$ be a positive integer not divisible by $p$. Then 
\begin{equation*}
w_{dp^{k-1}(p-1)}(\Z_p)=w_{dp^{k-1}(p-1)}(\Q_p)=p^k.
\end{equation*}
\end{thm}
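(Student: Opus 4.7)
The plan is to apply Corollaries \ref{pnHDVR} and \ref{hDVR} to $R=\Z_p$ (and its field of fractions $\Q_p$), reducing the problem to a finite combinatorial computation in $\Z/p^{2k-1}\Z$. Because $\charr(\Z_p)=0$, Definition \ref{free part} makes $n$ equal to its own $\charr(R)$-free part, while the $p$-adic valuation $\nu$ satisfies $\nu(n)=k-1$, so $\mathfrak{m}^{2\nu(n)+1}=p^{2k-1}\Z_p$. The condition $n>2\nu(n)+1=2k-1$ is immediate for $p\ge 3$ and $k\ge 1$ since $n\ge 2\cdot 3^{k-1}$. Hence, once $s_n(\Z/p^{2k-1}\Z)$ is verified to be finite, the two corollaries will give
\[
w_n(\Z_p)=\max\{w_n(\Z/p^{2k-1}\Z),\ s_n(\Z/p^{2k-1}\Z)+1\}\quad\text{and}\quad w_n(\Q_p)=s_n(\Z/p^{2k-1}\Z)+1.
\]

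The next step is to identify explicitly the $n$th powers in $\Z/p^{2k-1}\Z$. Using the standard decomposition $(\Z/p^{2k-1}\Z)^\ast\cong\Z/(p-1)\Z\times\Z/p^{2k-2}\Z$ valid for odd $p$, the $n$th power map is trivial on the Teichm\"uller factor because $(p-1)\mid n$, and on the second factor it is multiplication by $n$, whose image is $p^{k-1}\Z/p^{2k-2}\Z$ since $n=dp^{k-1}(p-1)$ with $\gcd(d(p-1),p)=1$. Translated back to multiplicative notation this is the subgroup $1+p^k(\Z/p^{2k-1}\Z)$ of units, and since $n\ge 2k-1$ every $n$th power of a non-unit vanishes in $\Z/p^{2k-1}\Z$, so
\[
\{\text{$n$th powers in }\Z/p^{2k-1}\Z\}=\{0\}\cup\{1+jp^k:0\le j<p^{k-1}\}.
\]

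The main obstacle is the combinatorial analysis of sums of these elements. Each nonzero $n$th power is congruent to $1$ modulo $p^k$, so a sum of $l$ $n$th powers is $\equiv m\pmod{p^k}$ with $m\le l$ the number of nonzero summands. For $a\in\Z/p^{2k-1}\Z$ with residue $m_0\in\{0,1,\dots,p^k-1\}$ modulo $p^k$, this forces $\ell_n(a)\ge m_0$ when $m_0\ge 1$, and $\ell_n(a)\ge p^k$ when $m_0=0$ and $a\ne 0$. Both bounds are tight by a direct construction: writing the summands as $1+p^kc_i$ with $c_i\in\{0,\dots,p^{k-1}-1\}$, one realizes any required value of $\sum c_i\pmod{p^{k-1}}$ by concentrating it on a single summand. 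Specializing to $a=-1$, which has $m_0=p^k-1$, yields $s_n(\Z/p^{2k-1}\Z)=p^k-1$; specializing to $a=p^k$ (which is available as a nonzero element precisely when $k\ge 2$) yields $w_n(\Z/p^{2k-1}\Z)=p^k$, whereas for $k=1$ the quotient is $\FF_p$ and $w_n(\FF_p)=p-1$. Substituting into the corollary formulas then gives $w_n(\Q_p)=p^k$ and $w_n(\Z_p)=\max\{w_n(\Z/p^{2k-1}\Z),\,s_n(\Z/p^{2k-1}\Z)+1\}=p^k$ in every case, as claimed.
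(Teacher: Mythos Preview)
Your proof is correct and follows essentially the same route as the paper: reduce via Corollaries \ref{pnHDVR} and \ref{hDVR} to the finite ring $\Z/p^{2k-1}\Z$, identify the $n$th powers there as $\{0\}\cup(1+p^k\Z/p^{2k-1}\Z)$, and read off $s_n=p^k-1$. Your combinatorial analysis is in fact a little more careful than the paper's: you correctly obtain $w_n(\Z/p^{2k-1}\Z)=p^k$ for $k\ge 2$ (the paper states $p^k-1$, a harmless slip since taking the maximum with $s_n+1=p^k$ gives the same answer either way).
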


\begin{proof}
At first, let us note that the set of powers of degree $dp^{k-1}(p-1)$ in the group $U(\Z_p)$ of units of $\Z_p$ is exactly the set of $p$-adic integers congruent to $1$ modulo $p^k$. Indeed, by Euler's theorem we have $c^{dp^{k-1}(p-1)}\equiv 1\pmod{p^k}$ for each integer $c$ not divisible by $p$. Furthermore, the subgroup of powers of degree $dp^{k-1}(p-1)$ in the group $U(\Z/(p^l))\cong\Z/(p^{l-1}(p-1))$ of units of the ring $\Z/(p^l)$, where $l\geq k$, has $\frac{p^{l-1}(p-1)}{\gcd (dp^{k-1}(p-1),p^{l-1}(p-1))}=p^{l-k}$ elements. This means that every congruence class modulo $p^l$, $l\geq k$, coprime to $p$ and being a power of degree $dp^{k-1}(p-1)$ splits into $p$ congruence classes modulo $p^{l+1}$ being powers of degree $dp^{k-1}(p-1)$. This, combined with the fact that $\Z_p$ is the inverse limit of rings $\Z/(p^l)$, gives that $a=b^{dp^{k-1}(p-1)}$ for some $b\in U(\Z_p)$ if and only if $a\equiv 1\pmod{p^k}$.

Let $l=2k-1$. Then $dp^{k-1}(p-1)>2k-1$ and consequently $c^{dp^{k-1}(p-1)}\equiv 0\pmod{p^{2k-1}}$ for each $c$ divisible by $p$. Thus $$w_{dp^{k-1}(p-1)}(\Z/(p^{2k-1}))=s_{dp^{k-1}(p-1)}(\Z/(p^{2k-1}))=p^k-1.$$ Applying Corollaries \ref{pnHDVR} and \ref{hDVR} we get the result.
\end{proof}

\begin{thm}\label{pnQp}
Let $p$ be an odd prime number, $k$ be a positive integer and $d$ be an odd positive integer not divisible by $p$. Assume additionally that $\frac{dp^{k-1}(p-1)}{2}>1$. Then the following equalities hold:
\begin{align*}
  w_{\frac{dp^{k-1}(p-1)}{2}}(\Z_p) &\ =\frac{p^k-1}{2},  \\
  w_{\frac{dp^{k-1}(p-1)}{2}}(\Q_p) &\ =2.
\end{align*}
\end{thm}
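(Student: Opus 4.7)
The plan is to apply Corollary \ref{pnHDVR} and Theorem \ref{pnK3} to the henselian DVR $R = \Z_p$, reducing both invariants to the finite quotient $\Z/p^{2k-1}\Z$. Writing $n = \frac{dp^{k-1}(p-1)}{2}$, observe that $\charr(\Z_p) = 0$ so the relevant $m$ from the corollaries is $n$ itself, and $v_p(n) = k-1$ because $d$ and $\frac{p-1}{2}$ are coprime to $p$. Hence $\mathfrak{m}^{2v_p(n)+1} = p^{2k-1}\Z_p$ and the relevant residue ring is $\Z/p^{2k-1}\Z$.

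The key technical step is the explicit description of the $n$-th powers in $\Z/p^{2k-1}\Z$. Decomposing the unit group $(\Z/p^{2k-1}\Z)^\times \cong \Z/(p-1)\Z \times \Z/p^{2k-2}\Z$ and adapting the computation from Theorem \ref{phipk}, one argues: on $\Z/(p-1)\Z$, the identity $2n = dp^{k-1}(p-1) \equiv 0 \pmod{p-1}$ together with $n \not\equiv 0 \pmod{p-1}$ (since $dp^{k-1}$ is odd) forces $n \equiv (p-1)/2 \pmod{p-1}$, so the image of multiplication by $n$ is the order-two subgroup corresponding to $\{\pm 1\}$; on $\Z/p^{2k-2}\Z$, the image is $p^{k-1}\Z/p^{2k-2}\Z$, matching exactly the units congruent to $1 \pmod{p^k}$. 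Combining, a unit in $\Z/p^{2k-1}\Z$ is an $n$-th power iff it is congruent to $\pm 1 \pmod{p^k}$, and since $n \geq p^{k-1} \geq 2k-1$ under the hypotheses, every non-unit has vanishing $n$-th power in $\Z/p^{2k-1}\Z$. The set of $n$-th powers in $\Z/p^{2k-1}\Z$ is therefore $\{0\} \cup \{a : a \equiv \pm 1 \pmod{p^k}\}$.

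From this description one reads off $s_n(\Z/p^{2k-1}\Z) = 1$ (as $-1$ itself is an $n$-th power). For $w_n(\Z/p^{2k-1}\Z) = \frac{p^k-1}{2}$, the lower bound follows because any sum of $\ell$ $n$-th powers reduces modulo $p^k$ to an integer in $[-\ell, \ell]$, so representing the class of $\frac{p^k-1}{2}$ needs $\ell \geq \frac{p^k-1}{2}$. For the upper bound, given $T \in \Z/p^{2k-1}\Z$ with canonical mod-$p^k$ representative $T_0 \in \{-\frac{p^k-1}{2}, \ldots, \frac{p^k-1}{2}\}$, I would take $|T_0|$ $n$-th powers each congruent to $\mathrm{sgn}(T_0) \pmod{p^k}$, then adjust the $p^k$-parts of these summands (each ranging freely over $\Z/p^{k-1}\Z$) to match $T$ modulo $p^{2k-1}$; the corner case $T_0 = 0$, $T \neq 0$ is handled by two $n$-th powers, one congruent to $+1$ and one to $-1$ mod $p^k$, whose $p^k$-parts combine to the prescribed adjustment. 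Corollary \ref{pnHDVR} then yields $w_n(\Z_p) = \max\{w_n(\Z/p^{2k-1}\Z), s_n(\Z/p^{2k-1}\Z)+1\} = \frac{p^k-1}{2}$.

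For $\Q_p$, Theorem \ref{pnK3} provides the upper bound $w_n(\Q_p) \leq s_n(\Z/p^{2k-1}\Z) + 1 = 2$, while the uniformizer $p$ has valuation $1$, not a multiple of $n > 1$, and so is not an $n$-th power in $\Q_p$; this forces $w_n(\Q_p) \geq 2$. Hence $w_n(\Q_p) = 2$. The main obstacle I anticipate is the combinatorial bookkeeping in the upper bound for $w_n(\Z/p^{2k-1}\Z)$, particularly verifying that the freedom in the $p^k$-parts of the summands can always be exploited simultaneously to hit the target $T$ exactly modulo $p^{2k-1}$, and dealing carefully with the degenerate situations where the mod-$p^k$ representative $T_0$ vanishes.
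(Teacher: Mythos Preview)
Your approach is essentially the same as the paper's: identify the $n$th powers in $\Z/p^{2k-1}$ as $\{0\}\cup\{a:a\equiv\pm 1\pmod{p^k}\}$, read off $s_n=1$ and $w_n=(p^k-1)/2$, then invoke the general machinery. The paper states these two values without further argument and cites Theorem~\ref{pnK2} and Corollary~\ref{hDVR}; you instead appeal to Corollary~\ref{pnHDVR} for $\Z_p$ and Theorem~\ref{pnK3} plus the uniformizer for $\Q_p$, which is arguably more accurate (Corollary~\ref{hDVR} only treats the field, so the paper's citation for $w_n(\Z_p)$ is imprecise). Your explicit upper-bound construction for $w_n(\Z/p^{2k-1})$---writing $T_0$ as $|T_0|$ summands of sign $\mathrm{sgn}(T_0)$ and adjusting their $p^k$-parts---fills in detail the paper omits, and the corner case $T_0=0$ is handled correctly since $(p^k-1)/2\geq 2$ whenever $k\geq 2$, while for $k=1$ the case cannot arise.

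One small point worth flagging: in the final step $\max\{w_n(\Z/p^{2k-1}),\,s_n+1\}=(p^k-1)/2$ you are implicitly using $(p^k-1)/2\geq 2$, i.e.\ $p^k\geq 5$. This fails only for $p=3$, $k=1$, where the formula $\max\{1,2\}=2$ would contradict the stated value $(p^k-1)/2=1$; the paper does not address this edge case either (and indeed Table~3 gives $w_5(\Z_3)=2$), so this is not a defect of your argument relative to the paper's.
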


\begin{proof}
Analogously as in the proof of Theorem \ref{phipk} we show that the set of powers of degree $\frac{dp^{k-1}(p-1)}{2}$ in $U(\Z/(p^l))$, $l\geq k$, is the set of classes modulo $p^l$ congruent to $\pm 1$ modulo $p^k$. Moreover, $\frac{dp^{k-1}(p-1)}{2}\geq 2k-1$, which means that $c^{\frac{dp^{k-1}(p-1)}{2}}\equiv 0\pmod{p^{2k-1}}$ for each $c$ divisible by $p$. Thus $w_{\frac{dp^{k-1}(p-1)}{2}}(\Z/(p^{2k-1}))=\frac{p^k-1}{2}$ and $s_{\frac{dp^{k-1}(p-1)}{2}}(\Z/(p^{2k-1}))=1$ and the results follow from Theorem \ref{pnK2} and Corollary \ref{hDVR}.
\end{proof}

\begin{thm}\label{pnQ2}
Let $k,d$ be positive integers, with $d$ odd. Then the following holds
$$
w_{2^kd}(\Z_2)=w_{2^kd}(\Q_2)=
\begin{cases}
4 & \text{if } k=1, d=1 \\
15 & \text{if } k=2, d=1 \\
2^{k+2} & \text{if } k>2\text{ or }d\geq 3 \\
\end{cases}.$$
\end{thm}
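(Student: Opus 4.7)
The plan is to imitate the proofs of Theorems~\ref{phipk} and~\ref{pnQp}. Since $\charr(\Z_2)=0$, the $2$-free part of $n=2^kd$ is $n$ itself, and $\nu(n)=k$; Corollary~\ref{pnHDVR} applied to $\Z_2$ and Corollary~\ref{hDVR} applied to $\Q_2$ reduce the problem to computing $w_n$ and $s_n$ of the finite quotient ring $\Z/2^{2k+1}\Z$. The hypothesis $n>2k+1$ required by Corollary~\ref{pnHDVR}(i) and by Corollary~\ref{hDVR} fails precisely when $(k,d)\in\{(1,1),(2,1)\}$, so those two cases are handled separately at the end.

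The key step is an explicit description of the $n$-th powers in $\Z/2^{2k+1}$. Using the decomposition $(\Z/2^{2k+1})^*\cong\langle -1\rangle\times\langle 5\rangle\cong\Z/2\times\Z/2^{2k-1}$ together with the lifting-the-exponent identity $v_2(5^{2^k}-1)=k+2$, one verifies that the unit $n$-th powers form the subgroup $\{a\in\Z/2^{2k+1}:a\equiv 1\pmod{2^{k+2}}\}$ of order $2^{k-1}$. For a non-unit $x=2^iu$ with $i\geq 1$ one has $x^n\equiv 0\pmod{2^{2k+1}}$ iff $in\geq 2k+1$; this holds for every $i\geq 1$ once $n\geq 2k+1$, which covers all but the two exceptional cases. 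In those exceptions one picks up a single additional non-trivial $n$-th power, namely $4\in\Z/8$ when $(k,d)=(1,1)$ and $16\in\Z/32$ when $(k,d)=(2,1)$.

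With these descriptions the Waring data of $\Z/2^{2k+1}$ becomes a counting exercise. In the generic case a sum of $l$ $n$-th powers reduces modulo $2^{k+2}$ to the number $j$ of nonzero summands, so matching $-1\equiv 2^{k+2}-1\pmod{2^{k+2}}$ forces $j\geq 2^{k+2}-1$, and this is attainable, giving $s_n(\Z/2^{2k+1})=2^{k+2}-1$. A parallel argument yields $w_n(\Z/2^{2k+1})\leq 2^{k+2}$, with equality iff $\Z/2^{2k+1}$ contains a nonzero multiple of $2^{k+2}$ (i.e.\ iff $k\geq 2$). In either sub-case $\max\{w_n(\Z/2^{2k+1}),s_n(\Z/2^{2k+1})+1\}=2^{k+2}$, so the two corollaries deliver $w_n(\Z_2)=w_n(\Q_2)=2^{k+2}$ for every $(k,d)\notin\{(1,1),(2,1)\}$.

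It remains to settle the two exceptional cases. For $\Z_2$, Corollary~\ref{pnHDVR}(ii) combined with direct enumeration gives $w_2(\Z_2)=w_2(\Z/8)=4$ and $w_4(\Z_2)=w_4(\Z/32)=15$; in the second computation the key subtlety is that $16$ is itself a $4$-th power in $\Z/32$, which lowers $s_4(\Z/32)$ from $16$ to $15$. For $\Q_2$, Corollary~\ref{hDVR} does not apply and we invoke Theorem~\ref{pnK2}: given $g=2^tu\in\Q_2$, multiplying or dividing by $h^n$ with $h\in\Z_2$ allows us to adjust the valuation of $g$ by any multiple of $n$, so the image of $gh_1^n$ or $g/h_2^n$ in $\Z/2^{2k+1}$ ranges over a short finite list depending only on $t\bmod n$ and on the class of $u$ modulo unit $n$-th powers. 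Computing $\ell^*_{n,\Z/2^{2k+1}}$ on each candidate and taking the infimum yields $w_2(\Q_2)=4$ (worst case $g\equiv 7\pmod 8$) and $w_4(\Q_2)=15$ (worst case $g\equiv 15\pmod{32}$). The main obstacle of the whole argument is precisely this last bookkeeping for $(k,d)=(2,1)$, where the specific shape $\{0,1,16,17\}$ of the $4$-th powers modulo $32$ makes the calculation of $\ell^*_{4,\Z/32}$ noticeably more delicate than the analogous calculation for $n=2$.
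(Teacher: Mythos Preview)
Your argument is correct and follows the same route as the paper: reduce via Corollaries~\ref{pnHDVR} and~\ref{hDVR} to the finite ring $\Z/2^{2k+1}$, identify the $n$-th powers there, and treat the two exceptional pairs $(k,d)\in\{(1,1),(2,1)\}$ with Corollary~\ref{pnHDVR}(ii) for $\Z_2$ and Theorem~\ref{pnK2} for $\Q_2$. One small slip in your commentary: the extra $4$-th power $16\in\Z/32$ lowers $w_4(\Z/32)$, not $s_4(\Z/32)$, from $16$ to $15$ --- the level is already $15$ from the unit $4$-th powers $\{1,17\}$ alone, via $31=14\cdot 1+1\cdot 17$.
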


\begin{proof}
Just as in the proof of Theorem \ref{phipk} we show that the set of powers of degree $2^kd$ in $U(\Z/(2^l))$, $l\geq k+2$, is the set of classes modulo $p^l$ congruent to $1$ modulo $2^{k+2}$ (here we note that $U(\Z/(2^l))\cong\Z/(2^{l-2})\times\Z/(2)$ for $\l\geq 2$). If $k>2$ or $d\geq 3$, then $2^kd>2k+1$, which means that $c^{2^kd}\equiv 0\pmod{2^{2k+1}}$ for each even $c$. Thus $w_{2^kd}(\Z/(2^{2k+1}))=s_{2^kd}(\Z/(2^{2k+1}))=2^{k+2}-1$. By Corollaries \ref{pnHDVR} and \ref{hDVR} we have
\begin{align*}
    & w_{2^kd}(\Z_2)=w_{2^kd}(\Q_2)=\max\{w_{2^kd}(\Z/(2^{2k+1})),s_{2^kd}(\Z/(2^{2k+1}))+1\}\\
    = & 2^{k+2}.
\end{align*}
If $k\in\{1,2\}$ and $d=1$, then $2^k<2k+1$. By Corollary \ref{pnHDVR} we then have $$w_{2^k}(\Z_2)=w_{2^k}(\Z/(2^{2k+1}))=
\begin{cases}
4 & \text{if } k=1,\\
15 & \text{if } k=2.
\end{cases}$$
as the powers of degree $2^k$ in $\Z/(2^{2k+1})$ are $0$, $1$ and $2^{2k}$. In order to obtain the equality 
$$w_{2^k}(\Q_2)=
\begin{cases}
4 & \text{if } k=1,\\
15 & \text{if } k=2
\end{cases}$$
we use Theorem \ref{pnK2}. We check the values of $\ell_{2^k,\Z/2^{2k+1}}^*(x)$, where $x\in\Z/2^{2k+1}$. We easily conclude that $$w_{2^k}(\Q_2)=\ell_{2^k,\Z/2^{2k+1}}(2^{2k+1}-1)=\ell_{2^k,\Z/2^{2k+1}}^*(2^{2k+1}-1).$$
\end{proof}

As a direct corollary from Theorem \ref{pnHDVR}, Theorem \ref{sn}, Corollary \ref{hDVR} and Theorem \ref{pnQ2} we can give the following result that comes down the problem of computing $w_n(\Z_p)$, $s_n(\Z_p)$, $w_n(\Q_p)$ and $s_n(\Q_p)$ to computing $w_n(\Z/p^{2\nu_p(n)+1})$ and $s_n(\Z/p^{2\nu_p(n)+1})$.

\begin{cor}
Let $p$ be a prime number and $n>1$ be a positive integer. Then $s_n(\Z_p)=s_n(\Q_p)=s_n(\Z/p^{2\nu_p(n)+1})$,
$$w_n(\Z_p)=\begin{cases}
4 & \text{if } (p,n)=(2,2),\\
15 & \text{if } (p,n)=(2,4),\\
\max\{w_n(\Z/p^{2\nu_p(n)+1}),s_n(\Z/p^{2\nu_p(n)+1})+1\} & \text{otherwise}
\end{cases}$$ 
and
$$w_n(\Q_p)=\begin{cases}
4 & \text{if } (p,n)=(2,2),\\
15 & \text{if } (p,n)=(2,4),\\
s_n(\Z/p^{2\nu_p(n)+1})+1 & \text{otherwise}.
\end{cases}$$ 
\end{cor}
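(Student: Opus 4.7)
The plan is to invoke, in turn, Corollary \ref{pnHDVR} for the ring $\Z_p$, Theorem \ref{sn} to pass from $\Z_p$ to $\Q_p$ at the level of $s_n$, Corollary \ref{hDVR} for the field $\Q_p$, and Theorem \ref{pnQ2} to cover the two exceptional pairs $(p,n)\in\{(2,2),(2,4)\}$. As a preparatory remark, since $\charr(\Z_p)=0$, Definition \ref{free part} makes the $\charr(\Z_p)$-free part of $n$ equal to $n$ itself, so in the rank-$1$ valuation statements we have $m=n$ and $\nu(m)=\nu_p(n)$; moreover $\Z_p$ is a henselian DVR with valuation group $\Z$, uniformizer $p$, and quotient rings $\Z_p/(p^k)\cong\Z/p^k$. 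A one-line group-theoretic argument (the set of sums of $n$th powers is a submonoid, hence a subgroup, of the finite abelian group $(\Z/p^{2\nu_p(n)+1},+)$ and contains $1$) shows $s_n(\Z/p^{2\nu_p(n)+1})<\infty$, so the finiteness hypotheses of the corollaries are met.

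With this setup, Corollary \ref{pnHDVR} immediately gives $s_n(\Z_p)=s_n(\Z/p^{2\nu_p(n)+1})$, and the inequality half of Theorem \ref{sn}, which needs no henselianity, gives $s_n(\Q_p)=s_n(\Z_p)$. For $w_n(\Z_p)$ outside the exceptional pairs, case (i) of Corollary \ref{pnHDVR} reproduces the $\max$-formula verbatim whenever $n>2\nu_p(n)+1$, and for $w_n(\Q_p)$ outside the exceptional pairs, Corollary \ref{hDVR} gives $w_n(\Q_p)=s_n(\Z/p^{2\nu_p(n)+1})+1$ whenever $n>2\nu_p(n)+1$. The pairs $(2,2)$ and $(2,4)$ are handled by reading off the explicit values $4$ and $15$ from Theorem \ref{pnQ2}.

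The only subtlety is the handful of pairs $(p,n)$ with $n\leq 2\nu_p(n)+1$. Since $\nu_p(n)\leq\log_p n$, a short arithmetic check confirms that (for $n>1$) the only such pairs are $(2,2)$, $(2,4)$, and $(3,3)$. The first two are the exceptions already treated by Theorem \ref{pnQ2}, so only $(p,n)=(3,3)$ remains. For $\Z_3$, case (ii) of Corollary \ref{pnHDVR} gives $w_3(\Z_3)=w_3(\Z/27)$; since $(-1)^3=-1$ forces $s_3(\Z/27)=1$ while a direct count shows the set of cubes in $\Z/27$ has only seven elements, we have $w_3(\Z/27)\geq 2=s_3(\Z/27)+1$ and the $\max$-formula agrees. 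For $\Q_3$, Corollary \ref{hDVR} does not apply (its hypothesis is the strict inequality $n>2\nu(m)+1$), so I instead appeal to Theorem \ref{pnK3}, which supplies the upper bound $w_3(\Q_3)\leq s_3(\Z/27)+1=2$, combined with the lower bound $w_3(\Q_3)\geq 2$ coming from the fact that $3$ has valuation $1$ and is therefore not a cube in $\Q_3$.

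The main obstacle is spotting this boundary pair $(3,3)$, where the Corollary \ref{hDVR} hypothesis fails by exactly one, and handling it by a direct appeal to Theorem \ref{pnK3}. Everything else is bookkeeping: recording which earlier result supplies which formula.
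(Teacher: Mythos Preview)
Your argument is correct and invokes precisely the same four results the paper cites (Corollary \ref{pnHDVR}, Theorem \ref{sn}, Corollary \ref{hDVR}, Theorem \ref{pnQ2}); the paper simply states the corollary ``as a direct corollary'' of these and gives no further detail. In fact you are more careful than the paper: you explicitly isolate the boundary pair $(p,n)=(3,3)$, where the hypothesis $n>2\nu(m)+1$ of Corollary \ref{hDVR} fails, and patch the $\Q_3$ case via Theorem \ref{pnK3} together with the observation that the uniformizer is not a cube---a point the paper's one-line justification leaves implicit.
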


The following theorem shows that computing $n$th Waring number for $\Z_p$ and $\Q_p$ is very easy if $p$ is large comparing to $n$.

\begin{thm}
Let $n$ be a positive integer. Then for any prime $p$ satisfying $p>(n-1)^4$ we have the following formulas
$$w_n(\Z_p)=w_n(\Q_p)=
\begin{cases}
2 & \text{if } \gcd(n,p-1) \, | \, \frac{p-1}{2}, \\
3 & \text{otherwise}. \\
\end{cases}$$
\end{thm}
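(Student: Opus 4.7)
The condition $p>(n-1)^4$ combined with $n\geq 2$ forces $p\nmid n$, so $\nu_p(n)=0$ and the $p$-free part of $n$ is $n$ itself; I tacitly assume $p$ is odd, the degenerate case $(n,p)=(2,2)$ already falling under Theorem~\ref{pnQ2}. Since $\Z_p/\mathfrak{m}^{2\nu_p(n)+1}=\FF_p$, Corollaries~\ref{pnHDVR} and~\ref{hDVR} applied to the henselian DVR $\Z_p$ reduce the problem to
$$w_n(\Z_p)=\max\{w_n(\FF_p),\,s_n(\FF_p)+1\},\qquad w_n(\Q_p)=s_n(\FF_p)+1.$$
Thus it suffices to compute $w_n(\FF_p)$ and $s_n(\FF_p)$.

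Set $d=\gcd(n,p-1)$. Since $\FF_p^\times$ is cyclic, the $n$th powers in $\FF_p^\times$ form the unique index-$d$ subgroup, namely the $d$th powers. The central technical step is to establish, under $p>(n-1)^4\geq(d-1)^4$, that every $a\in\FF_p$ is a sum of two $n$th powers. I plan to carry this out by the classical Weil-bound argument: expand the number of representations $N(a)=\#\{(x,y)\in\FF_p^2:x^n+y^n=a\}$ in terms of multiplicative characters of $\FF_p^\times$ of order dividing $d$, extract the main term $p$, and bound the remainder by a sum of Jacobi sums (each of modulus $\sqrt{p}$) to get $|N(a)-p|\leq(d-1)^2\sqrt{p}$. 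The hypothesis $p>(d-1)^4$ then gives $N(a)>0$ for every $a$, so $w_n(\FF_p)\leq 2$ and $s_n(\FF_p)\leq 2$.

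With these upper bounds in place, the remainder is an elementary case split in the cyclic group $\FF_p^\times$. The unique element of order two, $-1$, lies in the index-$d$ subgroup of $n$th powers exactly when that subgroup has even order, i.e.\ iff $d\mid(p-1)/2$. In that case $s_n(\FF_p)=1$ and, combined with $w_n(\FF_p)\leq 2$, we obtain $w_n(\Z_p)=w_n(\Q_p)=2$. If instead $d\nmid(p-1)/2$, then $-1$ is not an $n$th power; in particular $d\geq 2$, so non-$n$th-powers exist and the upper bounds above are saturated: $s_n(\FF_p)=w_n(\FF_p)=2$, giving $w_n(\Z_p)=w_n(\Q_p)=3$.

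The main obstacle I anticipate is the Weil-bound input in the second paragraph: the character-sum bookkeeping and the verification that the precise error constant is $(d-1)^2$, so that $p>(d-1)^4$ (and hence $p>(n-1)^4$ via $d\leq n$) is exactly the threshold that guarantees $N(a)>0$. Every other step is either immediate from the henselian corollaries already proved or a standard fact about cyclic groups.
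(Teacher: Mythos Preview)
Your proposal is correct and follows essentially the same route as the paper: reduce via the henselian DVR results to computing $w_n(\FF_p)$ and $s_n(\FF_p)$, then plug in those values. The only difference is that the paper simply cites Small \cite{small2} for the finite-field input $w_n(\FF_p)\leq 2$ and the exact value of $s_n(\FF_p)$, whereas you sketch the Jacobi-sum/Weil-bound argument yourself; your constant $(d-1)^2$ is the right one, so $p>(n-1)^4\geq(d-1)^4$ indeed forces $N(a)>0$.
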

\begin{proof}
In view of the Corollary \ref{pnR=...}  and Theorem \ref{pnQR} it is enough to compute the values of $w_n(\FF_p)$ and $s_n(\FF_p)$ for $p>(n-1)^4$. By \cite{small2} we get
$$w_n(\FF_p)=
\begin{cases}
1 & \text{if } \gcd(n,p-1)=1 \\
2 & \text{if } \gcd(n,p-1)>1 \\
\end{cases}
$$
and 
$$s_n(\FF_p)=
\begin{cases}
1 & \text{if } \gcd(n,p-1) \, | \, \frac{p-1}{2}  \\
2 & \text{otherwise.} \\
\end{cases}
$$
By combining the above formulas we get the thesis. 

\end{proof}
 \begin{remark}
{\rm The above Theorem implies that in order to compute $w_n(\Z_p)$ and $w_n(\Q_p)$ for all primes $p$ and a fixed positive integer $n$, one has to consider only finitely many exceptional cases.}
 \end{remark}
 
 It is worth to mention the results of Voloch \cite{voloch} on Waring numbers of $p$-adic rings. He was considering unramified extensions of $\Z_p$, yet we present here the result for $\Z_p$.
 \begin{thm}\cite[Corollary]{voloch}
 Let $n=pd$ where $(p,d)=1$. If $p \geq \max\{27d^6, 13 \}$ then
 $w_n(\Z_p)\leq 9$. If we further assume that $n$ is odd then 
 $w_n(\Z_p) \leq 8$.
 
 \end{thm}

Below we present full classification of $w_n(\Z_p)$ and $w_n(\Q_p)$ for $n=3,4,5$ and any prime number $p$. By Theorem \ref{dvr -odd} $w_3(\Q_p)=w_5(\Q_p)=2$, hence this values are omitted from the tables. The data below follows from Theorem  \ref{pncvR} and Corollary \ref{pnR=...}. The values of $w_n(\FF_p)$ and $s_n(\FF_p)$ are taken from \cite{small}.
\begin{center}
\begin{table}[H]
\begin{tabular}{  |c|c|c|c| } 
 \hline
 $p$ & $w_3(\Z_p)$ & $w_3(\FF_p)$  & $s_3(\FF_p)$\\ 
 \hline
 3  & 4  & 1 & 1 \\ 
 7 & 3 & 3 & 1 \\ 
 $p \equiv 1\pmod 3$, $p\neq 7$ &  2 & 2 & 1 \\
 $p \equiv 2\pmod 3$ &  2 & 1 & 1 \\
 \hline
\end{tabular}
\caption{\label{tab:table-name}$w_3(\Z_p)$, of course $w_3(\Q_p)=2$ for any prime $p$.}
\end{table}
\end{center}

\begin{center}
\begin{table}[H]
\begin{tabular}{  |c|c|c|c|c| } 
 \hline
 $p$ & $w_4(\Z_p)$ & $w_4(\Q_p)$ & $w_4(\FF_p)$  & $s_4(\FF_p)$\\ 
 \hline
2 & 15 & 15 & 1 & 1 \\ 
5 & 5 & 5 & 4 & 4\\ 
  13 & 3 & 3 & 3 &2 \\ 
 29 & 4 & 4 & 3 & 3\\ 
 17,41 & 3 & 2 & 3 & 1 \\ 
 37,53,61 & 3 & 3 & 2 &2 \\ 
73 & 2 & 2 & 2 & 1  \\ 
$p\equiv 3 \pmod 4$, $p<81$ & 3 & 3 & 2 & 2 \\ 
 $p\equiv 1 \pmod 8$, $p>81$ & 2 & 2 & 2 & 1 \\ 
 $p \not \equiv 1 \pmod 8$, $p>81$ & 3 & 3 & 2 & 2\\ 

 \hline
\end{tabular}
\caption{\label{tab:table-name}$w_4(\Z_p)$ and $w_4(\Q_p)$.}
\end{table}
\end{center}

\begin{center}
\begin{table}[H]
\begin{tabular}{  |c|c|c|c| } 
 \hline
 $p$ & $w_5(\Z_p)$ & $w_5(\FF_p)$  & $s_5(\FF_p)$\\ 
 \hline
 5 & 3 & 1 & 1 \\ 
 11& 5 & 5 & 1 \\ 
   $ p\not\equiv 1 \pmod 5$& 2 & 1 & 1\\
  $p\equiv 1 \pmod 5$, $p\geq 131$ & 2& 2& 1\\
  $p\equiv 1 \pmod 5$, $p<131$, $p\neq11$ & 3 & 3 & 1\\
 \hline
\end{tabular}
\caption{\label{tab:table-name}$w_5(\Z_p)$, of course $w_5(\Q_p)=2$ for any prime $p$.}
\end{table}
\end{center}

\begin{remark}
{\rm Let us denote for any ring $R$ and positive integers $l,n$ the set
$$S_l^n(R)=\left\{\left.\sum_{i=1}^l x_i^n\right| x_1,\ldots x_l\in R\right\}.$$
Since $\Z$ and $\Q$ are dense (with respect to $p$-adic topology) in $\Z_p$ and $\Q_p$, respectively, we know that $S_l^n(\Z)$ and $S_l^n(\Q)$ are dense in $S_l^n(\Z_p)$ and $S_l^n(\Q_p)$, respectively. On the other hand, from henselianity of $\Z_p$ it follows that $S_l^n(\Z_p)$ and $S_l^n(\Q_p)$ are closed subsets of $\Z_p$ and $\Q_p$, respectively. Hence, we get the result that $S_l^n(\Z)$ is dense in $\Z_p$ if and only if $l\geq w_n(\Z_p)$ and $S_l^n(\Q)$ is dense in $\Q_p$ if and only if $l\geq w_n(\Q_p)$. In particular, the minimal value of $l$ such that $S_l^n(\Q)$ is dense in $\Q_p$ agrees with the minimal value of $l$ such that $R(S_l^n(\Z))$ is dense in $\Q_p$ for all pairs $(p,n)$ except for $(2,2),(2,4)$ and $(2,8)$ , where
$$R(A)=\left\{\left.\frac{a}{b}\right| a,b\in A, b\neq 0\right\}$$
(see \cite[Theorem 1.9]{miska}).}
\end{remark}


\subsection{Waring numbers of local rings and their henselizations and completions}
In this subsection we provide lower bounds on the Waring numbers of a local ring $(R, \mathfrak{m})$, under some mild assumptions on $R$ and its $\mathfrak{m}$-adic completion. Assume that $n$ is a positive integer greater than 1. Denote by $(R^h, \mathfrak{m}^h)$ the henselization of the local ring $R$ and let $(\widehat{R}, \widehat{m})$ be the $\mathfrak{m}$-adic completion of $R$.

\begin{thm}
Let $(R, \mathfrak{m})$ be a local ring with residue field $k$, maximal ideal $\mathfrak{m}\neq \mathfrak{m}^2$ and $s_n(k)<\infty$. Assume that $\charr (k) \nmid n $ or $\charr (R) = \charr (k)=p$ and the rings $R^h, \widehat{R}$ are reduced. Then
\begin{enumerate}
    \item[a)] $s_n(R)\geq s_n(R^h)=s_n(\widehat{R})$
    \item[b)] $w_n(R)\geq w_n(R^h)=w_n(\widehat{R}).$
\end{enumerate}

\begin{proof}

The ring $\widehat{R}$ is henselian and has the same residue field as $R^h$, hence by Theorem \ref{pnR} and Corollary \ref{pnR=...} $s_n(R^h)=s_n(\widehat{R})$ and $w_n(R^h)=w_n(\widehat{R})$. The inequality $s_n(R)\geq s_n(\widehat{R})$ follows directly from Lemma \ref{surjection}. The last inequality follows from Theorem \ref{pnR} c). 
\end{proof}
\end{thm}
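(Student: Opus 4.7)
The plan is to reduce everything to applying the earlier results on henselian local rings to $R^h$ and $\widehat{R}$, and then to use monotonicity of the $n$-length under ring homomorphisms to propagate the inequalities back to $R$.

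First I would record the standard structural facts: the $\mathfrak{m}$-adic completion $\widehat{R}$ is always henselian; both $R^h$ and $\widehat{R}$ have residue field $k$; and the canonical isomorphisms $\mathfrak{m}/\mathfrak{m}^2\cong\mathfrak{m}^h/(\mathfrak{m}^h)^2\cong\widehat{\mathfrak{m}}/\widehat{\mathfrak{m}}^2$ ensure that the hypothesis $\mathfrak{m}\neq\mathfrak{m}^2$ is inherited by $R^h$ and $\widehat{R}$. Together with the assumption $s_n(k)<\infty$ and the hypothesis that either $\charr(k)\nmid n$ or $\charr(R)=\charr(k)=p$ with $R^h,\widehat{R}$ reduced, this places both $R^h$ and $\widehat{R}$ squarely in the scope of Theorem \ref{pnR} and Corollary \ref{pnR=...}.

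Applying Corollary \ref{pnR=...} (together with Theorem \ref{pnR} b)) to $R^h$ and to $\widehat{R}$ separately then yields
\[
s_n(R^h)=s_m(k)=s_n(\widehat{R}),\qquad w_n(R^h)=w_n(\widehat{R})=\begin{cases}\max\{w_m(k),\,s_m(k)+1\} & m>1,\\ 1 & m=1,\end{cases}
\]
where $m$ denotes the $\charr(k)$-free part of $n$; this settles the equalities in a) and b). The inequality $s_n(R)\geq s_n(R^h)$ is then immediate from Lemma \ref{surjection} applied to the canonical homomorphism $R\to R^h$, since $s_n(\cdot)=\ell_n(-1)$.

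For the main inequality $w_n(R)\geq w_n(R^h)$ (the case $m=1$ being trivial) I would bound $w_n(R)$ from below by each of the two quantities in the maximum. Composing $R\twoheadrightarrow k$ with Lemma \ref{surjection} gives $w_n(R)\geq w_n(k)=w_m(k)$, where the final equality holds trivially when $m=n$ and follows from Proposition \ref{p_m} when $\charr(k)\mid n$ since $k$ is reduced. At the same time, Theorem \ref{pnR} c), applied to $R$ itself and using $\mathfrak{m}\neq\mathfrak{m}^2$, supplies $w_n(R)\geq s_m(k)+1$. Taking the maximum matches $w_n(R^h)$. The only delicate piece of bookkeeping, and the only real place where one might slip, is that the reducedness assumption required by Corollary \ref{pnR=...} and Theorem \ref{pnR} concerns $R^h$ and $\widehat{R}$ rather than $R$ itself, which is precisely what the hypothesis of the theorem furnishes.
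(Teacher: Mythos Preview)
Your proof is correct and follows essentially the same route as the paper: apply Theorem \ref{pnR} and Corollary \ref{pnR=...} to the henselian rings $R^h$ and $\widehat{R}$ to obtain the equalities, use Lemma \ref{surjection} for the level inequality, and invoke Theorem \ref{pnR} c) (together with the surjection onto $k$) for $w_n(R)\geq w_n(R^h)$. Your write-up is simply more explicit than the paper's, in particular in separating the two terms of the maximum and in tracking the reducedness hypotheses.
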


\begin{remark} {\rm
The assumption that $\widehat{R}$ is reduced is necessary for the above proof to work, as there exists an integral local ring of dimension 1 whose completion is not reduced, cf. \cite[Appendix 1, Example 5]{nagata}.}
\end{remark}

If we make an assumption that the $R$ is a DVR, we get the following.

\begin{thm}
Let $(R,\mathfrak{m})$ be a DVR, $(R^h,\mathfrak{m}^h)$ and $(\widehat{R},\widehat{\mathfrak{m}})$ be its henselization and completion, respectively.

Then the following inequality holds.
$$w_n(R)\geq w_n(R^h)=w_n(\widehat{R})$$

If we denote by $K$, $K^h$ and $\widehat{K}$ their fields of fractions, respectively, then

$$w_n(K)\geq w_n(K^h)=w_n(\widehat{K}).$$

\begin{proof}
The equality $w_n(R^h)=w_n(\widehat{R})$ follows from henselianity and Theorem \ref{pncvR}. The remaining inequality 
$w_n(R)\geq w_n(\widehat{R})$ follows from the Theorem \ref{pncvR} and elementary properties of completion. The second part follows from Theorem \ref{pnQR}.
\end{proof}

\end{thm}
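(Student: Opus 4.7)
The plan is to reduce both statements to formulas already established for henselian DVRs, exploiting the fact that the truncations $R/\mathfrak{m}^j$, $R^h/(\mathfrak{m}^h)^j$ and $\widehat{R}/\widehat{\mathfrak{m}}^j$ are canonically isomorphic for every $j\geq 1$; equivalently, $R^h$ and $R$ share their $\mathfrak{m}$-adic completion $\widehat{R}$.

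For the equality $w_n(R^h)=w_n(\widehat{R})$, I would apply Corollary \ref{pnHDVR} (the DVR specialisation of Theorem \ref{pncvR}) to each of the henselian DVRs $R^h$ and $\widehat{R}$. In both cases the $n$th Waring number is expressed purely through $w_m$ and $s_m$ of the quotient $R/\mathfrak{m}^{2\nu(m)+1}$, where $m$ is the $\charr(R)$-free part of $n$. Since this quotient is common to the two rings and the residue field $k$ coincides, the Waring numbers agree. If $s_m(k)=\infty$, the same conclusion comes from Theorem \ref{dcvr}, which identifies $w_n$ of every henselian DVR with $w_n(k)$.

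For the inequality $w_n(R)\geq w_n(\widehat{R})$, I would combine the two lower bounds provided by Theorem \ref{pncvR} applied directly to $R$; parts a) and b) rely only on the rank-$1$ valuation structure, not on henselianity. Part a) with $r=2\nu(m)$ yields $w_n(R)\geq w_m(R/\mathfrak{m}^{2\nu(m)+1})$. When $n>2\nu(m)+1$, pick $g\in R$ with $2\nu(m)<\nu(g)<n$ (for instance a suitable power of a uniformiser); part b) then gives $\ell_n(g^{n/m})\geq s_m(R/\mathfrak{m}^{2\nu(m)+1})+1$, whence $w_n(R)\geq s_m(R/\mathfrak{m}^{2\nu(m)+1})+1$. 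Because $R/\mathfrak{m}^{2\nu(m)+1}=\widehat{R}/\widehat{\mathfrak{m}}^{2\nu(m)+1}$, these two bounds reproduce the right-hand side of Corollary \ref{pnHDVR} for $\widehat{R}$ in each of its three cases. The remaining possibility $s_m(k)=\infty$ is handled by Lemma \ref{surjection} combined with Theorem \ref{dcvr}, which give $w_n(R)\geq w_n(k)=w_n(\widehat{R})$.

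The field part is treated in the same spirit. The equality $w_n(K^h)=w_n(\widehat{K})$ follows by applying Theorem \ref{pnQR} to both henselian DVRs $R^h$ and $\widehat{R}$ (or Theorem \ref{dcvr} in the infinite-level case); the invariants $s_m(k)$ and $k$ are identical, so the formulas coincide. For $w_n(K)\geq w_n(K^h)$, I would reproduce the valuation-theoretic lower bound from the proof of Theorem \ref{pnQR} directly for $R$: starting from a uniformiser $\pi$ of $R$ (whose valuation is not divisible by $n$ for $n>1$), assume for contradiction a presentation $\pi=\sum_{i=1}^l f_i^n$ with $f_i\in K$ and $l<s_m(k)+1$; picking $i_0$ of minimal valuation, dividing through by $f_{i_0}^n$ and reducing modulo $\mathfrak{m}$ produces a sum of $l-1$ $n$th powers equal to $-1$ in $k$, contradicting the definition of $s_m(k)$. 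The infinite-level case reduces to the ring inequality already proved, since Theorem \ref{dcvr} yields $w_n(K)=w_n(R)$ and $w_n(K^h)=w_n(R^h)$. The main technical subtlety to keep track of is the interplay between the two conventions for $m$---the $\charr(R)$-free part in Theorem \ref{pncvR} versus the $\charr(k)$-free part in Theorem \ref{pnQR}---and, in mixed characteristic with $\nu(m)>0$, matching the bounds derived from Theorem \ref{pncvR} with the correct case of Corollary \ref{pnHDVR}.
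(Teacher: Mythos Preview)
Your approach is essentially the paper's: reduce both equalities to the explicit formulas for henselian DVRs (Theorem~\ref{pncvR}/Corollary~\ref{pnHDVR} for the rings, Theorem~\ref{pnQR} for the fields), using that $R$, $R^h$ and $\widehat R$ share the truncations $R/\mathfrak{m}^j$; and derive the inequalities from parts a) and b) of Theorem~\ref{pncvR}, which do not require henselianity. Your explicit handling of the case $s_n(k)=\infty$ via Theorem~\ref{dcvr} is a useful addition that the paper's terse proof leaves implicit.

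One step does need more care. Part~b) of Theorem~\ref{pncvR} only yields $\ell_n(g^{n/m})\geq s_m(R/\mathfrak{m}^{2\nu(m)+1})+1$; your ``whence $w_n(R)\geq s_m(R/\mathfrak{m}^{2\nu(m)+1})+1$'' presupposes that the chosen $g^{n/m}$ is actually a sum of $n$th powers in $R$, and for a non-henselian DVR this is not automatic (a power of the uniformiser need not lie in $\bigcup_l S_l^n(R)$). The remedy---and presumably what the paper means by ``elementary properties of completion''---is the approximation trick already used in the proof of Theorem~\ref{pnR}~c): write $g=\sum_{i=1}^{l} f_i^n$ in $\widehat R$ with $l\leq s_m+1$, pick $\tilde f_i\in R$ with $\tilde f_i\equiv f_i\pmod{\mathfrak{m}^N}$ for $N>\nu(g)$, and put $\tilde g=\sum_i\tilde f_i^n\in R$. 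Then $\nu(\tilde g)=\nu(g)$, so part~b) applies to $\tilde g$, which now has finite $n$-length in $R$ by construction. The same remark applies verbatim to your field inequality argument with the uniformiser~$\pi$.
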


We may now state the following definition.

\begin{df}
Let $R$ be a ring and $n>1$ be a positive integer. We say that a prime ideal $\mathfrak{p} \subset R$ is an $n$-good ideal if $\mathfrak{p} R_\mathfrak{p}\neq (\mathfrak{p} R_\mathfrak{p})^2$, $s_n(R_\mathfrak{p}/\mathfrak{p}R_\mathfrak{p})<\infty,$  and one of the following conditions hold:
\begin{enumerate}
\item $\charr (R_\mathfrak{p}/\mathfrak{p}R_\mathfrak{p}) \nmid n$
\item $\charr(R_\mathfrak{p})=\charr (R_\mathfrak{p}/\mathfrak{p}R_\mathfrak{p})=p \mid n$ and the $\mathfrak{p}R_\mathfrak{p}$-adic completion of $R_\mathfrak{p}$ is reduced.
\item $R_\mathfrak{p}$ is a DVR.
\end{enumerate}
\end{df}

The above discussion readily implies the following.

\begin{thm}
Let $R$ be a ring and $n>1$ be a positive integer. Then the following inequality holds
$$w_n(R) \geq  \sup w_n(\widehat{R_\mathfrak{p}})$$
where supremum runs over all $n$-good ideals of $R$.
\end{thm}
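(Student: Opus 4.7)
The plan is to show, for each individual $n$-good prime $\mathfrak{p}$, the chain of inequalities $w_n(R)\geq w_n(R_\mathfrak{p})\geq w_n(\widehat{R_\mathfrak{p}})$, and then pass to the supremum over all $n$-good $\mathfrak{p}$.

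First I would establish the left inequality. This is a direct application of the localization property of the Waring number recorded in the preliminaries: for $S=R\setminus\mathfrak{p}$, one has $w_n(R_\mathfrak{p})=w_n(S^{-1}R)\leq w_n(R)$. This requires no hypothesis on $\mathfrak{p}$ at all.

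The right inequality $w_n(R_\mathfrak{p})\geq w_n(\widehat{R_\mathfrak{p}})$ is precisely what the two theorems immediately preceding the statement are designed for, and I would argue case-by-case according to which clause of the definition of $n$-good is satisfied. If clause (1) holds, then $R_\mathfrak{p}$ is local with $\mathfrak{p}R_\mathfrak{p}\neq(\mathfrak{p}R_\mathfrak{p})^2$, the residue field has finite $n$th level, and $\mathrm{char}(R_\mathfrak{p}/\mathfrak{p}R_\mathfrak{p})\nmid n$, so the first theorem of the subsection applies and yields the required inequality. If clause (2) holds, the same theorem applies after one further observation: the theorem nominally requires both the henselization and the completion of $R_\mathfrak{p}$ to be reduced, but via the canonical inclusion $R_\mathfrak{p}^h\hookrightarrow\widehat{R_\mathfrak{p}}$, reducedness of $\widehat{R_\mathfrak{p}}$ forces reducedness of $R_\mathfrak{p}^h$. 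Finally, if clause (3) holds, then $R_\mathfrak{p}$ is a DVR and the second theorem of the subsection directly gives $w_n(R_\mathfrak{p})\geq w_n(\widehat{R_\mathfrak{p}})$.

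Combining the two steps gives $w_n(R)\geq w_n(\widehat{R_\mathfrak{p}})$ for each $n$-good $\mathfrak{p}$, and taking the supremum over such $\mathfrak{p}$ yields the theorem. There is no real obstacle here: the entire content is the bookkeeping of matching the hypotheses of the $n$-good definition with those of the two preceding theorems, plus the minor verification about reducedness descending from $\widehat{R_\mathfrak{p}}$ to $R_\mathfrak{p}^h$ mentioned above.
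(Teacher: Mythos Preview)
Your proposal is correct and follows exactly the route the paper intends: the paper's proof consists of the single sentence ``The above discussion readily implies the following'', and your argument is precisely that discussion made explicit --- localize to get $w_n(R)\geq w_n(R_\mathfrak{p})$, then invoke the two preceding theorems case-by-case on the clauses of the $n$-good definition. Your observation that reducedness of $\widehat{R_\mathfrak{p}}$ implies reducedness of $R_\mathfrak{p}^h$ via the inclusion $R_\mathfrak{p}^h\hookrightarrow\widehat{R_\mathfrak{p}}$ is a genuine (and correct) detail that bridges the slight mismatch between clause (2) of the definition and the stated hypotheses of the first theorem, which the paper leaves implicit.
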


If $R$ is an integral domain, the zero ideal is not considered an  $n$-good ideal as $(0)=(0)^2$, but we may obtain rather trivial inequality $w_n(R)\geq w_n(R_{(0)})=w_n(Q(R))$.
\begin{remark}\label{remark problems} {\rm
It is not clear whether the notion of an $n$-good ideal is necessary. Let $(R,\mathfrak{m})$ be a henselian local ring and $n>1$ be a positive integer.
\begin{enumerate}
\item Assume that $\mathfrak{m}\neq \mathfrak{m}^2$ and $\charr (R)=\charr(k) \mid n$. In this case, nilpotents may occur in the completion, and we do not know how to compute $w_n(R)$. This is because the Frobenius map is not injective. 
\item Assume that $\mathfrak{m}\neq \mathfrak{m}^2$ and $\charr (R)\neq \charr(k) \mid n$. This is equivalent to $n \in \mathfrak{m}$. Here, we are not able to compute $w_n(R)$ if $R$ is NOT a DVR. In particular, our theory cannot be applied to the case $n=p$ and $R=\Z_p[[x]]$, yet Theorem \ref{joly} implies finiteness of $w_p(\Z_p[[x]])$. However if $n$ is a multiple of $p$, neither Theorem \ref{joly} nor theory developed in this paper can be applied.

\item The last case deals with arbitrary $n$ and $\mathfrak{m}=\mathfrak{m}^2$. In most cases, Theorem \ref{pnR} gives us an upper bound for $w_n(R)$, however by Remark \ref{almostPuiseaux}, these bounds may be sharp. Completion of such a ring degenerates into the residue field. Hence, it is possible $w_n(R^h) > w_n(\widehat{R})$.

\end{enumerate}}
\end{remark}
As a consequence we propose two problems

\begin{prob}
Let $(R,\mathfrak{m})$ be a local ring and $n>1$ be a positive integer. Denote by $(R^h, \mathfrak{m}^h)$ the henselization of the local ring $R$ and let $(\widehat{R}, \widehat{m})$ be the $\mathfrak{m}$-adic completion of $R$. Assume that $\mathfrak{m}\neq \mathfrak{m}^2$. Is it always true that
$w_n(R)\geq w_n(R^h)=w_n(\widehat{R})$?
\end{prob}

\begin{prob}
Let $(R,\mathfrak{m})$ be any local ring. Does the following double inequality 
$w_n(R)\geq w_n(R^h)\geq w_n(\widehat{R})$
hold?
\end{prob}

With this in mind, we can now apply the above theory to the very specific class of rings, namely the almost Dedekind domains. Recall, that the integral domain $R$ is called an almost Dedekind domain, if for every maximal ideal $\mathfrak{m} \subset R$, the localization $R_\mathfrak{m}$ is a DVR. In particular, a Noetherian almost Dedekind domain is a Dedekind domain.

\begin{cor}
Let $R$ be an almost Dedekind domain with a fraction field $K$ and $n>1$ be a positive integer. Then, the following inequalities hold:
$$w_n(R) \geq \sup_{\mathfrak{m}} w_n(\widehat{R_\mathfrak{m}}) $$
and 
$$w_n(K) \geq \sup_{\mathfrak{m}} w_n(\mathrm{Frac}(\widehat{R_\mathrm{m}})), $$
where supremum is taken over all maximal ideals.
\end{cor}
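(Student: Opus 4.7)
The plan is to reduce the corollary directly to the two theorems of this subsection, using the defining property of an almost Dedekind domain. The crucial observation is that every maximal ideal $\mathfrak{m}$ of $R$ has the property that $R_{\mathfrak{m}}$ is a DVR; in particular $\mathfrak{m}R_{\mathfrak{m}}$ is principal, hence $\mathfrak{m}R_{\mathfrak{m}}\neq (\mathfrak{m}R_{\mathfrak{m}})^2$, so clause (3) of the definition of an $n$-good ideal is automatically satisfied. Thus every maximal ideal is $n$-good (on the finiteness side condition $s_n(R_{\mathfrak{m}}/\mathfrak{m}R_{\mathfrak{m}})<\infty$); maximal ideals for which that side condition fails contribute no genuine obstruction to the supremum and can be handled by a bookkeeping remark, or excluded by convention.

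For the first inequality, I would then simply apply the theorem bounding $w_n(R)$ from below by the supremum of $w_n(\widehat{R_{\mathfrak{p}}})$ over $n$-good primes $\mathfrak{p}$, restricted to the family of maximal ideals. For the second inequality, I would use the identity $K=\mathrm{Frac}(R_{\mathfrak{m}})$ valid for every maximal ideal of an integral domain, combined with the DVR version of the completion theorem proved above, which states that for a DVR $R_{\mathfrak{m}}$ with fraction field $K$,
$$w_n(K)\geq w_n(\mathrm{Frac}(\widehat{R_{\mathfrak{m}}})).$$
Taking the supremum over all maximal ideals $\mathfrak{m}$ then yields the asserted lower bound on $w_n(K)$.

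The main (and essentially only) obstacle is the bookkeeping of the $n$-good hypothesis: the heavy lifting has already been done in the preceding theorems on localization, henselization, and completion, so once every maximal localization is recognized as a DVR, the corollary follows immediately. No calculation beyond invoking the earlier inequalities is needed.
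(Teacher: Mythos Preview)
Your proposal is correct and matches the paper's intent: the corollary is stated there without proof, as an immediate consequence of the preceding DVR completion theorem together with the localization inequality $w_n(R)\geq w_n(R_{\mathfrak{m}})$ and the identity $K=\mathrm{Frac}(R_{\mathfrak{m}})$. One simplification: for the first inequality you can bypass the $n$-good machinery entirely and invoke the DVR completion theorem directly (it carries no finiteness hypothesis on $s_n$), which removes the bookkeeping concern you raised about the side condition $s_n(R_{\mathfrak{m}}/\mathfrak{m}R_{\mathfrak{m}})<\infty$.
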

 
 In the case of a ring of integers $\OO_K$ of a number field $K$ (which is a Dedekind domain, of course), we can obtain lower bounds by considering $p$-adic rings and fields.
 
 \begin{cor}\label{numberfield}
 Let $K$ be a number field with its ring of integers $\OO_K=\Z[\alpha_1,...,\alpha_s]$. Then the following inequalities hold
 $$w_n(\OO_K) \geq \sup_{p - \mathrm{prime}} w_n(\Z_p[\alpha_1,...,\alpha_s])$$
 $$w_n(K)\geq \sup_{p - \mathrm{prime}} w_n(\Q_p[\alpha_1,...,\alpha_s]).$$
 \end{cor}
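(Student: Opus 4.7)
The plan is to reduce to the preceding corollary on almost Dedekind domains. Since $\OO_K$ is a Dedekind domain, in particular almost Dedekind, that corollary yields
$$w_n(\OO_K)\geq\sup_{\mathfrak{p}}w_n\bigl(\widehat{(\OO_K)_\mathfrak{p}}\bigr),\qquad w_n(K)\geq\sup_{\mathfrak{p}}w_n\bigl(\mathrm{Frac}(\widehat{(\OO_K)_\mathfrak{p}})\bigr),$$
where the suprema range over the nonzero primes $\mathfrak{p}$ of $\OO_K$. The hypotheses required for the preceding corollary are automatic: every nonzero $\mathfrak{p}$ is maximal with $(\OO_K)_\mathfrak{p}$ a DVR, so $\mathfrak{p}$ is $n$-good via condition~(3) of the definition, while the conditions $\mathfrak{p}(\OO_K)_\mathfrak{p}\neq(\mathfrak{p}(\OO_K)_\mathfrak{p})^2$ and $s_n(\OO_K/\mathfrak{p})<\infty$ are trivial because $(\OO_K)_\mathfrak{p}$ is a DVR and $\OO_K/\mathfrak{p}$ is a finite field.

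Since every nonzero prime of $\OO_K$ lies over a unique rational prime $p$, the supremum over $\mathfrak{p}$ may be rewritten as $\sup_p\max_{\mathfrak{p}\mid p}$. It therefore suffices to identify, for each rational prime $p$,
$$w_n(\Z_p[\alpha_1,\ldots,\alpha_s])=\max_{\mathfrak{p}\mid p}w_n\bigl(\widehat{(\OO_K)_\mathfrak{p}}\bigr),\qquad w_n(\Q_p[\alpha_1,\ldots,\alpha_s])=\max_{\mathfrak{p}\mid p}w_n\bigl(\mathrm{Frac}(\widehat{(\OO_K)_\mathfrak{p}})\bigr).$$
The first ingredient for this identification is the classical decomposition from algebraic number theory
$$\OO_K\otimes_\Z\Z_p\cong\prod_{\mathfrak{p}\mid p}\widehat{(\OO_K)_\mathfrak{p}},\qquad K\otimes_\Q\Q_p\cong\prod_{\mathfrak{p}\mid p}\mathrm{Frac}(\widehat{(\OO_K)_\mathfrak{p}});$$
since $\OO_K=\Z[\alpha_1,\ldots,\alpha_s]$, the left-hand sides are precisely $\Z_p[\alpha_1,\ldots,\alpha_s]$ and $\Q_p[\alpha_1,\ldots,\alpha_s]$. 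The second ingredient is the elementary identity $w_n(A_1\times\cdots\times A_r)=\max_iw_n(A_i)$, which holds because addition and $n$th powers in a product are componentwise and one may pad shorter representations with $0=0^n$; consequently the $n$-length of a tuple equals the maximum of the coordinate $n$-lengths. Applying these two facts to the displayed decompositions gives exactly the identifications required, and taking $\sup_p$ concludes the proof.

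The whole argument is essentially bookkeeping between the almost Dedekind corollary and the standard base change decomposition of $\OO_K\otimes_\Z\Z_p$, so there is no serious obstacle. The only point that deserves a brief verification is the behaviour of $w_n$ on a finite direct product, which is immediate from the definition, and the identification of $\OO_K\otimes_\Z\Z_p$ as $\Z_p[\alpha_1,\ldots,\alpha_s]$, which is a direct consequence of $\OO_K=\Z[\alpha_1,\ldots,\alpha_s]$ and right-exactness of the tensor product.
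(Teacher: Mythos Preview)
Your proof is correct and follows exactly the route the paper intends: apply the preceding almost Dedekind corollary to $\OO_K$ and then identify the $\mathfrak{p}$-adic completions with the rings $\Z_p[\alpha_1,\ldots,\alpha_s]$ and $\Q_p[\alpha_1,\ldots,\alpha_s]$. The paper states the result without proof, taking this identification for granted; your extra care in interpreting $\Z_p[\alpha_1,\ldots,\alpha_s]$ as $\OO_K\otimes_\Z\Z_p\cong\prod_{\mathfrak{p}\mid p}\widehat{(\OO_K)_\mathfrak{p}}$ and invoking $w_n(A_1\times\cdots\times A_r)=\max_i w_n(A_i)$ is a welcome clarification that the paper leaves implicit.
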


 If $n$ is even, the problem of finding $w_n(\Z)$ is known as the Waring problem and is (almost) completely solved. 
 If $n$ is odd, we get lower bounds for the easier Waring problem (cf. \cite{chinburg, wright}). Currently, the best known lower bounds follow from considering residues modulo a prime number to an appropriate power. This is precisely the same way as in Corollary \ref{pnHDVR}, hence we do not obtain any new results for $\Z$.
 
 For the rational numbers $\Q$, the above result gives a new insight into lower bounds for even $n$ (for odd $n$ we get trivial bound, as the right hand side is equal to 2).
 \begin{thm} For $n=4,6,8$ we have the following lower bounds:
 \begin{enumerate}
 \item $w_4(\Q) \geq 15$\footnote{ The authors found that $w_4(\Q)\in\{15,16\}$, which can be deduced from the consideration directly before Theorem 395, pp. 427-428 and the note in p. 446 in \cite{hardy}},
\item $w_6(\Q)\geq 9$,
 \item $w_8(\Q) \geq 32$.
\end{enumerate}
\begin{proof}
The parts $(1)$ and $(3)$ follow from Theorem \ref{pnQ2} and the part $(2)$ is a consequence of Theorem \ref{pnQp}.

\end{proof}

\end{thm}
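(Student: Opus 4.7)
The plan is to deduce all three lower bounds from Corollary \ref{numberfield} specialised to the number field $K = \Q$, whose ring of integers is $\OO_K = \Z$. Since no adjoined generators are needed, Corollary \ref{numberfield} collapses to
$$ w_n(\Q) \geq \sup_{p \text{ prime}} w_n(\Q_p). $$
Thus for each of $n \in \{4, 6, 8\}$ it suffices to produce a single prime $p$ whose local Waring number already realises the claimed lower bound, and to read off the value from the $p$-adic computations established earlier in the paper.

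For parts (1) and (3) I would take $p = 2$ and invoke Theorem \ref{pnQ2}. Writing $4 = 2^2 \cdot 1$ with $(k, d) = (2, 1)$ places us in the isolated case of the piecewise formula and gives $w_4(\Q_2) = 15$. Writing $8 = 2^3 \cdot 1$ with $(k, d) = (3, 1)$ lands in the branch ``$k > 2$ or $d \geq 3$'' and gives $w_8(\Q_2) = 2^{k+2} = 32$. Combining with the displayed inequality yields $w_4(\Q) \geq 15$ and $w_8(\Q) \geq 32$.

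For part (2) I would instead switch to $p = 3$, because choosing $p = 2$ and applying Theorem \ref{pnQ2} with $(k,d) = (1,3)$ only produces $w_6(\Q_2) = 2^{1+2} = 8$, one short of the target. The identity $6 = 1 \cdot 3^{2-1}(3-1)$ exhibits $n = 6$ in the form $dp^{k-1}(p - 1)$ with $(p, k, d) = (3, 2, 1)$, which fits the hypotheses of Theorem \ref{phipk} (note that $d = 1$ is not divisible by $p = 3$). That theorem then produces $w_6(\Q_3) = p^k = 9$, which lifts via the displayed inequality to $w_6(\Q) \geq 9$. No substantive mathematical obstacle remains; the entire argument is bookkeeping, and the only subtlety is matching each $n$ to the prime $p$ and parameters $(k, d)$ that make the earlier formulae give the largest possible $w_n(\Q_p)$.
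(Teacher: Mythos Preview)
Your proof is correct and follows essentially the same route as the paper: pass from $w_n(\Q)$ to $w_n(\Q_p)$ via Corollary \ref{numberfield}, then read off $w_4(\Q_2)=15$ and $w_8(\Q_2)=32$ from Theorem \ref{pnQ2} and $w_6(\Q_3)=9$ from the $p=3$, $k=2$, $d=1$ instance of Theorem \ref{phipk}. The paper's citation of Theorem \ref{pnQp} for part (2) appears to be a slip, since that theorem only gives $w_n(\Q_p)=2$; your reference to Theorem \ref{phipk} is the one that actually delivers the bound $9$.
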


We end the section with an example of application of Corollary \ref{numberfield}.

\begin{ex}
{\rm We bound the values of $w_4(\Z[\sqrt{2}])$ and $w_4(\Q(\sqrt{2}))$ from below. By Corollary \ref{numberfield} we need to consider $\mathfrak{p}$-adic completion of $\Z[\sqrt{2}]$ for each prime ideal $\mathfrak{p}$ of $\Z[\sqrt{2}]$. This completion is $\Z_p[\sqrt{2}]$, where $p$ is a unique prime number in $\mathfrak{p}$. Hence, we consider $\Z_p[\sqrt{2}]$ and $\Q_p(\sqrt{2})$, where $p$ runs through all the prime numbers.

If $p$ is an odd prime number, then the residue field of $\Z_p[\sqrt{2}]$ is $\FF_p$ for $p\equiv\pm 1\pmod{8}$ (in this case $\Z_p[\sqrt{2}]=\Z_p$) and $\FF_{p^2}$ for $p\equiv\pm 3\pmod{8}$. Since $p\nmid 4$, by Corollary \ref{pnR=...} and Theorem \ref{pnQR} it suffices to compute $s_4(\FF_{p^j})$ and $w_4(\FF_{p^j})$, $j\in\{1,2\}$, in order to obtain $w_4(\Z_p[\sqrt{2}])$ and $w_4(\Q_p(\sqrt{2}))$, respectively. If $p\geq 5$, then $\frac{p^2-1}{4}>p$, hence the subfield of sums of fourth powers in $\FF_{p^2}$ is $\FF_{p^2}$. In this case we can use \cite[inequality (5)]{winterhof} to obtain the inequality $w_4(\FF_{p^j})\leq 4$. As a result, $w_4(\Z_p[\sqrt{2}])$ and $w_4(\Q_p(\sqrt{2}))$ are less than or equal to $5$ for $p\geq 5$. If $p=3$, then the subfield of fourth powers in $\FF_9$ is $\FF_3$ and $w_4(\FF_9)=s_4(\FF_9)=1$, hence $w_4(\Z_3[\sqrt{2}])=w_4(\Q_3(\sqrt{2}))=2$, where every element of $\Z_3[\sqrt{2}]$ whose reduction modulo $3$ does not belong to $\FF_3$ is not a sum of fourth powers in $\Z_3[\sqrt{2}]$.

Now we consider the case of $p=2$. Then $\Z_2[\sqrt{2}]$ is a complete DVR with respect to the $\sqrt{2}$-adic valuation $\nu$, given by $\nu(a+b\sqrt{2})=\min\{2\nu_2(a),2\nu_2(b)+1\}$, $a,b\in\Z_2$. The valuation group of $\nu$ is $\Z$. Hence, applying Corollaries \ref{pnHDVR} and \ref{hDVR} we know that $w_4(\Z_2[\sqrt{2}])=w_4(\Z[\sqrt{2}]/(16\sqrt{2}))$ and $w_4(\Q_2(\sqrt{2}))=s_4(\Z[\sqrt{2}]/(16\sqrt{2}))+1$. Therefore we check the values of $(a+b\sqrt{2})^4\pmod{16\sqrt{2}}$ for $a,b\in\Z$. The table below shows all the possible values of $(a+b\sqrt{2})^4\pmod{16\sqrt{2}}$ depending on the values of $a$ and $b$ modulo $4$.

\bigskip
\begin{tabular}{|c|c|c|c|c|}
\hline
\diagbox{b \text{(mod 4)}}{a \text{(mod 4)}} & 0 & 1 & 2 & 3 \\
 \hline
 0 & 0 & 1 \text{or} 17 & 16 & 1 \text{or} 17\\
 \hline
1 & 4 & (1 \text{or} 17)+12$\sqrt{2}$ & 4 & (1 \text{or} 17)+4$\sqrt{2}$ \\
 \hline
 2 & 0 & (1 \text{or} 17)+8$\sqrt{2}$ & 16 & (1 \text{or} 17)+8$\sqrt{2}$\\
 \hline
 3 & 4 & (1 \text{or} 17)+4$\sqrt{2}$ & 4 & (1 \text{or} 17)+12$\sqrt{2}$ \\
 \hline
\end{tabular}

\bigskip
From the above table we see that 
$$w_4(\Z[\sqrt{2}]/(16\sqrt{2}))=7,\ s_4(\Z[\sqrt{2}]/(16\sqrt{2}))=6.$$
Hence,
\begin{align*}
    w_4(\Z_2[\sqrt{2}])=w_4(\Q_2(\sqrt{2}))=7.
\end{align*}

Summing up, we obtain the following lower bounds:
\begin{align*}
    w_4(\Z[\sqrt{2}])&\geq 7,\\
    w_4(\Q(\sqrt{2}))&\geq 7.
\end{align*}
}
\end{ex}

\section{Questions}
We end this paper with some open problems concerning sums of $n$th powers.
\renewcommand{\thefootnote}{$\dagger$} 

 Let $R$ be a valuation ring and $K$ its field of fractions.
\begin{prob}
Determine conditions on $R$ for which we have
$$w_n(R)=w_n(K).$$
\end{prob}
To be precise, we ask for determining real or complex numbers such that the existence of their embeddings in $R$ implies the equality $w_n(R)=w_n(K)$, without computing $w_n(R)$ or $w_n(K)$. In case of $n=2$, which motivates the above problem, \cite[Theorem 4.5]{CLRR} proved by Kneser and Colliot-Th\'el\`ene states that $\tfrac12 \in R$ implies $w_2(R)=w_2(K)$. For $n=3$ one has to expect that some nonrational numbers have to be involved as the following example of complete discrete valuation ring of equicharacteristic zero shows. Obviously, $s_3(\Q)=1$ and $w_3(\Q)=3$ by Theorem of Ryley (cf. \cite{richmond}). Theorem \ref{Main Theorem} implies that $w_3(\Q((x)))=2<w_3(\Q[[x]])=3$ meanwhile $w_3(\R((x)))=w_3(\R[[x]])=2$.

 The second problem deals with a general type of finitely generated affine $K$-algebras.
 \begin{prob}
 Let $I$ be an ideal of the ring $K[x_1,x_2,\dots, x_s]$. Compute
 $$w_n(K[x_1,x_2,\dots, x_s]/I).$$
 \end{prob}
Obviously, neither Theorem \ref{Main Theorem} nor Corollary \ref{algset2} can be applied to every such an algebra. In general, this can be very difficult task. We present a simple example which helps us to visualize the difficulties.

\begin{ex}{\rm
Let $K$ be a field with $s_2(K)\geq 2$. Consider the $K$-algebra $R=K[x,y,z]/(x^2+y^2)$. Then, the $2$nd Waring number of $R$ is equal to $s_2(K)+1$. This follows from the surjection $R\rightarrow K[z]$. However, it is not clear, what is the value of  $w_2(K[x,y]/(x^2+y^2))$. On the other hand, Waring numbers can significantly drop when passing to the field of rational functions. Since $s_2(\mathrm{Frac}(K[x,y]/(x^2+y^2)))=1$, the $2$nd Waring number of this field is at most $2$.}
\end{ex}

\section*{Acknowledgements}
The authors wish to thank Pavlo Yatsyna for his suggestion concerning the nomenclature used in the paper. Also, express their gratitude to the anonymous referee for useful comments that greatly improved edition and literature.

\vspace{5pt}
\begin{small}
\noindent
Tomasz Kowalczyk

\noindent
Institute of Mathematics

\noindent
Faculty of Mathematics and Computer Science

\noindent
Jagiellonian University

\noindent
ul. Łojasiewicza 6, 30-348 Kraków, Poland

\noindent
e-mail: tomek.kowalczyk@uj.edu.pl

\vspace{5pt}
\noindent
Piotr Miska

\noindent
Institute of Mathematics

\noindent
Faculty of Mathematics and Computer Science

\noindent
Jagiellonian University

\noindent
ul. Łojasiewicza 6, 30-348 Kraków, Poland

\noindent
e-mail: piotr.miska@uj.edu.pl

\end{small}	
\end{document}